\numberwithin{equation}{section}
\providecommand\@dotsep{5}
\def\listtodoname{List of Todos}
\def\listoftodos{\@starttoc{tdo}\listtodoname}
\def\QED{\hfill {$\square$}\goodbreak \medskip}
\let\OLDthebibliography\thebibliography
\renewcommand\thebibliography[1]{
	\OLDthebibliography{#1}
	\setlength{\parskip}{1pt}
	\setlength{\itemsep}{1pt plus 0.3ex}
}
\newtheorem{thm}{Theorem}[section]
\newtheorem{lem}{Lemma}[section]
\newtheorem{cor}{Corollary}[section]
\newtheorem{prop}{Proposition}[section]
\newtheorem{rem}{Remark}[section]
\begin{document}	
 \title
	{On  periodic and compactly supported least energy solutions  to semilinear elliptic equations with non-Lipschitz nonlinearity}
	\author{ \ Jacques Giacomoni$^{\,1}$\footnote{e-mail: {\tt jacques.giacomoni@univ-pau.fr}}, \ Yavdat Il'yasov$^{\,2,3}$\footnote{e-mail: {\tt ilyasov02@gmail.com}} \ and  Deepak Kumar$^{\,4}$\footnote{e-mail: {\tt deepak.kr0894@gmail.com}} \\
		$^1\,${\small Universit\'e  de Pau et des Pays de l'Adour, LMAP (UMR E2S-UPPA CNRS 5142) }\\ {\small Bat. IPRA, Avenue de l'Universit\'e F-64013 Pau, France}\\  
		$^2\,${\small Institute of Mathematics of UFRC RAS, 112, Chernyshevsky str., 450008, Ufa, Russia,}\\
		$^3\,${\small Instituto de Matem\'atica e Estat\'istica. Universidade Federal de Goi\'as, 74001-970, Goiania, Brazil, } \\ 
		$^4\,${\small Department of Mathematics, Indian Institute of Technology Delhi,}\\
		{\small	Hauz Khaz, New Delhi-110016, India } }

	\date{}
	\maketitle


\begin{abstract} 

We discuss the existence and non-existence of periodic in one variable and compactly supported in the other variables least energy solutions for equations with non-Lipschitz nonlinearity of the form: $-\Delta u=\lambda u^p- u^q$ in $\mathbb{R}^{N+1}$, where $ 0< q < p \leq 1$, $\lambda \in \mathbb{R}$. The approach is based on the Nehari manifold method supplemented by a one-sided constraint given through the functional  of the suitable Pohozaev identity. The limit value of the parameter $\lambda$,  where the approach is applicable, corresponds to the existence of periodic in one variable and compactly supported in the other variables least energy solutions. This value is found through the extrem values of nonlinear generalized Rayleigh quotients and the so-called curve of the critical exponents of $p,q$. Important properties of the solutions are derived, such as that they are not trivial with respect to the periodic variable and do not coincide with compactly supported solutions on the entire space $\mathbb{R}^{N+1}$.
\medskip

\noindent \textbf{Key words:} Semilinear elliptic equation,  non-Lipschitz nonlinearity, compactly supported solutions, periodic solutions, generalized Rayleigh's quotients, the Pohozaev identity.
\medskip

\noindent \textit{2010 Mathematics Subject Classification:} 35B50, 35J20, 35J60, 35N25.
\end{abstract}



%

\section{Introduction}
 We deal with the
following equation:
\begin{align}\label{1}
	&-u_{zz}-\Delta_x u  = \lambda |u|^{p-1}u- |u|^{q-1}u=:f_\lambda(u),~~ (z,x) \in D_T:=(-T,T)\times \Omega,
\end{align}
subject to the periodic boundary conditions in one variable: 
\begin{align}
				&u(-T,x)=u(T,x),~~x \in \Omega,\label{2} \\
							&u_z(-T,x)=u_z(T,x),~~x \in \Omega\label{3}
\end{align}
and zero Dirichlet  boundary condition on $\partial \Omega$:
\begin{equation}\label{4}
	u(z,x)|_{\partial \Omega}=0, ~~z \in (-T,T).
\end{equation}
Here  $ 0< q < p \leq 1$,  $0<T<+\infty$, $\Omega \subset \mathbb{R}^{N}$, $N\geq 3$  is a  bounded  domain with a
smooth boundary $\partial \Omega $, which is strictly star-shaped with
respect to a point $x_{0}\in $ $\mathbb{R}^{N}$ (which will be identified as the origin of coordinates if no confusion arises) and $\lambda$ is a real parameter. We shall use the notations  $\Delta_x = \partial^2/\partial x_1^2 +\dots+ \partial^2/\partial x_N^2$, $u_{z}=\partial u/\partial z $ and $\nabla_x:=( \partial/\partial x_1,..., \partial/\partial x_N)$.
By weak solution  of \eqref{1}-\eqref{4} we mean a critical point of the  energy functional
\begin{align*}
		\Phi_{\lambda,T}(u)=\frac{1}{2} \int_{D_T} (|\nabla_x u|^{2}+ |u_z|^{2})\,\mathrm{d}x\mathrm{d}z
		- \frac{\lambda}{p+1}\int_{D_T} | u|^{p+1} \mathrm{d}x\mathrm{d}z+\frac{1}{q+1}\int_{D_T} |u|^{q+1} \mathrm{d}x\mathrm{d}z, 
\end{align*}
for $u \in W_{per,0}^{1,2}(D_T)$, where $W_{per,0}^{1,2}(D_T)$ is the Sobolev space of functions obeyed to periodicity condition \eqref{2} and zero boundary conditions \eqref{4}, $D_{T}:=(-T,T)\times \Omega$. The meaningfulness of the periodicity condition \eqref{3}  for a weak solution $u \in W_{per,0}^{1,2}(D_T)$ of \eqref{1} is given below in Section 2.

The construction of solutions that are periodic in part of variables  are relevant in a range of applications, including the study of waves on the surface of a deep fluid, the study of convection patterns  and gravity–capillary waves in hydrodynamics, in the analysis of self-localized structures without symmetry in non-linear  media, vortex flows of an incompressible perfect fluid, and particle-shaped states in field models of elementary particles (see e.g., \cite{Alfimov, Groves, Zakharov, Zhan, Zozulya}). Problems of this type often are used as approximation models (in particular, in numerical simulations) in the study  of solutions considered on the entire space $\mathbb{R}^{N+1}$ (see e.g., \cite{BerestWei, Dyachenko, Milewski, Robinson, Robinson2}).
 
This subject is also related to the problem of finding new types of solutions, including entire solutions on the whole space which are non-radially symmetric, positive, and do not tend to 0 at infinity.  In recent decades, a number of remarkable results for elliptic problems on the existence of various new types of non-negative solutions including non-radially symmetric  in the entire space,  compactly supported and partially free boundary solutions  have been obtained (see e.g., \cite{BDI, Busca, dancer, DelPino, DiazHerIlCOUNT, Hern, Il_Perid_MatSb, Il_Perid_Izv95, Kaper1, Kaper2, Malchiodi, pucci, Serrin-Zou, vazquez}). Furthermore, it has been discovered that finding periodic solutions in part of variables can be useful for the construction of the so-called multiple ends entire and spike-layers solutions \cite{DelPino, Malchiodi} and has a relation with the construction of Delaunay's unduloids \cite{BerestWei, Delaunay, Malchiodi, Mazzeo}.


We are interested in obtaining a \textit{periodically nontrivial} solution $u$ of \eqref{1}-\eqref{4}, i.e., which satisfies $\int_{D_T}  |u_z|^{2}\,\mathrm{d}x\mathrm{d}z \neq 0$. In the case $\int_{D_T}  |u_z|^{2}\,\mathrm{d}x\mathrm{d}z =0$, we say that  $u$ is \textit{periodically trivial}. 
By a  \textit{least energy solution} (sometimes also referred to as  \textit{ground state} (cf. \cite{beres}))
of \eqref{1}-\eqref{4} we mean a weak solution $u$ of \eqref{1}-\eqref{4} which satisfies the inequality 
$
\Phi_{\lambda,T}(u)\leq \Phi_{\lambda,T}(w)
$ 
for any non-zero weak solution $w \in W_{per,0}^{1,2}(D_T)$ of \eqref{1}-\eqref{3}.

The primary aim of the present paper is to analyse the impact of the parameters $\lambda>0$ and  $T>0$ on  the existence to \eqref{1}-\eqref{4} of periodically nontrivial least energy solutions.

To construct solutions  periodic in  parts of the variables, the approach proposed by Dancer \cite{dancer} is often used. This approach is based on  the fact that  solution $u^N$ of the equation of type \eqref{1}  in the lower dimension, say in $\mathbb{R}^{N}$, can be trivially extended to the whole $\mathbb{R}^{N+1}$ by setting $u^{N+1}(z,x)=u^N(x)$, which is in fact, in our terminology periodically trivial.   In \cite{dancer}, Dancer using the Crandal-Rabinowitz theorem showed that the solutions, periodic in $z \in (-T,T)$ with some $T>0$ (as the bifurcation parameter), bifurcate from the $u^{N+1}(z,x)$. Applying this approach requires $f_\lambda$ to be differentiable and $f_\lambda'(0)<0$ (cf. \cite{dancer}). However, $f_\lambda$ in \eqref{1} is non-Lipschitz at zero.  An additional obstacle is that the bifurcation methods, as in \cite{dancer}, do not allow to make a conclusion, in a simple way,  that the obtaining bifurcation branches of  solutions consist of  least energy solutions.

In the present work, to construct solutions periodic in  parts of the variables for  \eqref{1}-\eqref{4} we use the variational approach proposed in \cite{Il_Perid_MatSb, ilDifUrav94, Il_Perid_Izv95}. This method  makes it possible to find a new type of solutions  for elliptic equations in general forms but do not require that $f \in C^1$. Moreover, it can also provide a bifurcation type result  \cite{Il_Perid_Izv95} and the investigation of the asymptotic behaviour of solutions as $T$ tends to infinity \cite{ilDifUrav94}. 



Since the non-linear term $f_\lambda(u)$ is locally non-Lipschitz,  peculiar behavior of solutions of the problem appears. In particular, it may lead to the violation of the Hopf maximum principle on the boundary of $\Omega$ and one can expect the existence of the \textit{periodic by $z$ and compactly supported in $\Omega$ solution}  that is the weak solution $u \in W_{per,0}^{1,2}(D_T)$ of  \eqref{1}-\eqref{4} satisfying supplemented boundary condition 
 \begin{equation*}
\frac{\partial u(z)}{\partial \nu }=0~~\mbox{on}~~\partial \Omega,~~ z \in (-T,T).  
\end{equation*}
Here $\nu $ denotes the unit outward normal to $\partial \Omega $. See  \cite{Diaz,pucci,Serrin-Zou, vazquez} for further details on validity of Hopf lemma. It makes sense to refer to such solutions of  \eqref{1}-\eqref{4} by a more general term as \textit{solutions with compact support in part of variables}. Thus, one can state the following problem, which is of particular interest to us:  Can elliptic equations with non-Lipschitzian nonlinearity have solutions with compact support in part of variables?

The existence of compactly supported solutions on all of variables  for the equation  
\begin{equation}  \label{Eqw}
-\Delta \psi=\lambda \psi^{p}-\psi^{q}~~~\mbox{in}~~\mathbb{R}^M,~~ M\geq 1
\tag*{$(1_{\mathbb{R}^M}^*)$}
\end{equation}
had been obtained in  celebrated results by Kaper and Kwong \cite{Kaper1}, Kaper, Kwong and Li \cite{Kaper2},  Cort\'{a}zar, M.~Elgueta and P.~Felmer \cite{CortElgFelmer-1, CortElgFelmer-2},  J. Serrin and H. \ Zou \cite{Serrin-Zou}. From these results it follows  
\begin{thm}\label{thm:KKSZ} 
Assume $0<q <p<1$, $M\geq 3$  and $\lambda=1$. Let $\psi^M$ be a non-negative $C^1$
distribution solution of \ref{Eqw} with connected support. Then the support of $\psi^M$ is a ball and supp($\psi^M$) is
radially symmetric about the center. 
Furthermore, equation \ref{Eqw} admits at most one radial symmetric
compactly supported solution  and it is a classical, i.e., $\psi^M \in C^{2}(\mathbb{R}^M)$.
\end{thm}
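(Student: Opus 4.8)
The plan is to obtain the statement from three largely independent ingredients: radial symmetry of the solution and of its support; uniqueness within the class of radial profiles; and $C^2$-regularity, in particular across the free boundary. For the symmetry I would run the moving plane method of Gidas--Ni--Nirenberg in the form developed for nonlinearities that are only H\"older (not Lipschitz) at the origin and for non-negative solutions vanishing on a set of positive measure (Serrin--Zou, Cort\'azar--Elgueta--Felmer). The structural feature that makes the comparison steps go through is that $g(u):=u^p-u^q$ is negative and \emph{decreasing} near $0$: one has $g(u)=u^q(u^{p-q}-1)<0$ and $g'(u)=u^{q-1}(p\,u^{p-q}-q)<0$ on some interval $(0,\delta)$, since $p>q$. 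This monotonicity of $g$ near the level set $\{\psi^M=0\}$ substitutes for the missing global Lipschitz bound in the narrow-domain and maximum-principle arguments, and the same sign structure near $0$ also forces the support to be bounded. Using connectedness of the support, the conclusion is that, after translating its center to the origin, $\mathrm{supp}(\psi^M)=\overline{B_R(0)}$ for some $R\in(0,\infty)$ and $\psi^M(x)=u(|x|)$ with $u>0$, $u'<0$ on $(0,R)$ and $u(R)=u'(R)=0$.

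The symmetry turns the equation into the ODE $u''+\tfrac{M-1}{r}u'+u^p-u^q=0$ on $(0,R)$ with $u'(0)=0$ and $u(R)=u'(R)=0$, which I would analyse by shooting in the height $\alpha:=u(0)$. Since the energy $E(r):=\tfrac12 u'(r)^2+\tfrac1{p+1}u(r)^{p+1}-\tfrac1{q+1}u(r)^{q+1}$ obeys $E'(r)=-\tfrac{M-1}{r}u'(r)^2\le 0$, comparing $r=0$ with $r=R$ yields $\tfrac1{p+1}\alpha^{p+1}\ge\tfrac1{q+1}\alpha^{q+1}$, i.e., $\alpha\ge\alpha_0:=\big((p+1)/(q+1)\big)^{1/(p-q)}>1$; in particular there is no compactly supported profile of small height. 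Uniqueness of the admissible $\alpha$ would then follow from the ODE uniqueness theory for this class of problems (Kaper--Kwong--Li, Serrin--Zou): one differentiates the trajectory with respect to $\alpha$, controls the nodal structure of the resulting solution of the linearized equation, and uses the Pohozaev identity attached to \ref{Eqw} (whose boundary term vanishes because $u(R)=u'(R)=0$) to make the free-boundary radius $R(\alpha)$ depend monotonically on $\alpha$. \textbf{This uniqueness step is the main obstacle}: the ratio $g(u)/u=u^{p-1}-u^{q-1}$ is not globally monotone, so the usual two-trajectory comparison has to be localized and the Pohozaev functional is precisely what prevents the argument from degenerating; this is where the genuine work of the cited papers lies.

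For regularity, on $B_R$ we have $u>0$, so $u\mapsto u^p-u^q$ is smooth there and the standard regularity bootstrap gives $u\in C^\infty(B_R)$, with $C^2$ at the origin because $u'(0)=0$ and the right-hand side is continuous. Near $r=R$ the dominant balance is $u''\sim u^q$, since $u^q$ dominates $u^p$ as $u\to 0^+$ (here $q<p$); a barrier argument then yields the sharp rate $u(r)\sim c\,(R-r)^{\beta}$ as $r\to R^-$ with $\beta=\tfrac{2}{1-q}>2$. Consequently $u$, extended by $0$ for $r>R$, has vanishing first and second derivatives at $r=R$ and joins the trivial solution in a $C^2$ fashion, so $\psi^M\in C^2(\mathbb{R}^M)$. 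Combining the three ingredients yields the theorem.
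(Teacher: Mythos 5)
Your outline is sound and follows essentially the same route the paper relies on: the paper gives no proof of this theorem, stating that it follows from the shooting-method results of Kaper--Kwong(--Li) and Cort\'azar--Elgueta--Felmer and the moving-plane results of Cort\'azar--Elgueta--Felmer and Serrin--Zou, which is exactly the decomposition (moving planes for symmetry of the support and the solution, ODE shooting for uniqueness of the radial profile, barrier analysis giving $u\sim c\,(R-r)^{2/(1-q)}$ for the $C^{2}$ matching across the free boundary) that you sketch. Since the genuinely hard step --- uniqueness --- is deferred by you, as by the paper, to those same references, your proposal is correct as a reconstruction of the cited literature rather than an independent proof.
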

\begin{rem} Similar result holds for \ref{Eqw} with any $\lambda>0$ since scaling $u^{M}_{\lambda}(y):=(\sigma) ^{\frac{2}{1-q }}\cdot \psi^M(y/\sigma )$ with $\sigma_\lambda=(\lambda)^{-\frac{1-q }{2(p -q )}}$ yields a solution of  \ref{Eqw} with given $\lambda$. 
\end{rem}


This kind of results can be obtained by using the shooting methods (see \cite{Kaper1, Kaper2, CortElgFelmer-1}) and the  Alexandrov \& Serrin moving plane methods (see \cite{CortElgFelmer-2, Serrin-Zou}).
However, these approaches are difficult to apply directly to the problem \eqref{1}-\eqref{4}. Indeed,  the presence of the separate variable $z$  turns the phase space $H=\mathbb{R}$ in the shooting method into an infinite-dimensional space of functions $H=\{w(z),~z\in (-T,T)\}$, and forces the application of the moving plane method by part of the variables $x \in \mathbb{R}^N$.
Furthermore, the approaches based on shooting and moving plane methods  \cite{Kaper1, Kaper2, CortElgFelmer-1, CortElgFelmer-2, Serrin-Zou} make it difficult to obtain a least energy solution of \ref{Eqw}. We also refer to \cite{AntontsevDiaz, maagli} for the existence result of compact support solutions to singular problems by means of suitable sub and super-solution method.




In the present paper, we study the existence of compactly supported solutions  using the variational method introduced in \cite{IlEg,DiazHerIlCOUNT, DIH1, DIH}  which allows us to find compactly supported least energy solutions of \ref{Eqw} using the so-called \textit{Pohozaev functional} 
\begin{equation*}
P_{\lambda,T}(u):=\frac{1}{2^{\ast }}\int_{D_T} |\nabla_x u|^{2}\,\mathrm{d}x\mathrm{d}z+\frac{1}{2}\int_{D_T} |u_z|^{2}\,\mathrm{d}x\mathrm{d}z+\frac{1%
}{{q +1}}\int_{D_T}|u|^{{q +1}}\,\mathrm{d}x\mathrm{d}z- \frac{\lambda}{{%
p +1}}\int_{D_T}|u|^{{p +1}}\,\mathrm{d}x\mathrm{d}z,
\end{equation*}
in introducing a supplementary one-side constrain $P_{\lambda,T}(u)\leq 0$ in the Nehari manifold. Here $2^{\ast }=2N/(N-2)$, $N\geq 3$.
Thus,  we seek  the solutions of  \eqref{1}-\eqref{4} through the minimization of $\Phi_{\lambda,T}(u)$ restricted on the following Nehari manifold subset
$$
M_{\lambda,T}:=\{u \in W_{per,0}^{1,2}(D_T)\setminus 0:~\Phi_{\lambda,T}'(u)=0, P_{\lambda,T}(u)\leq 0 \}.
$$
Below we show that the solution of  \eqref{1}-\eqref{4} obtained by this approach yields a least energy solution.
It is known \cite{ilyaReil} that the applicability of the Nehari manifold method depends on  the so-called \textit{extreme values of the Nehari manifold  method}, namely, on the limit points of the set of parameters $\lambda$, $T$, $p,q$ of the problem, where the so-called 
\textit{applicability conditions of the Nehari manifold method}  
\begin{align*}
	\Phi''_{\lambda,T}(u):=\frac{d^2}{d t^2}\Phi_{\lambda,T}(tu)|_{t=1}\neq 0 \quad\mbox{and }
	P_{\lambda,T}(u)< 0,  ~~\forall u \in \mathcal{N}_\lambda 
\end{align*}
are satisfied \cite{ilyaReil, DiazHerIlCOUNT}.  
In the present work, we show that the extreme values of the Nehari manifold method for parameters $p , q$ are defined by the so-called curves of critical exponents  \cite{ilCrit, IlEg}, which allows us to introduce the following subset of exponents 
\begin{equation*}
\mathcal{E}_{s}(N):=\{(q ,p ) \in (0,1)\times (0,1):~N(1-q
)(1-p )-2(1+q )(1+p )>0\},
\end{equation*}%
delimited by the curve of the critical exponents $\{(q ,p) \in \mathcal{E}_{s}(N):~N(1-q)(1-p )-2(1+q )(1+p )=0\}$.
The main property of $\mathcal{E}_{s}(N)$ is that for star-shaped domains $%
\Omega $ in $\mathbb{R}^{N}$,  if $(q ,p )\in \mathcal{E}_{s}(N)$, any Nehari manifold minimizer $\hat{u} \in M_{\lambda,T}$ of $\Phi_{\lambda,T}$ 
is non degenerate, i.e., satisfies $\Phi_{\lambda,T}^{\prime \prime }(\hat{u})>0$. 



Furthermore, for the parameter $\lambda$, we introduce the following extreme values 
\begin{align*}
	&\lambda_{0}^T=\inf_{u\in W_{per,0}^{1,2}(D_T)\setminus 0}\lambda_{0}^T(u),\\
	&\lambda _{1P}^{D_T}=\inf_{u\in W_{per,0}^{1,2}(D_T)\setminus 0}\lambda _{1P}^{D_T}(u). 
\end{align*}
 where $\lambda_{0}^T(\cdot), \lambda _{1P}^{D_T}(\cdot)$ are nonlinear generalized Rayleigh quotients which are expressed  by exact formulas \eqref{Lambdaexpression}, \eqref{LpoT}, respectively. Furthermore, in section \ref{sec:NGRQ} we show that $0<\lambda _{1P}^{D_T}<\lambda_{0}^T<+\infty$.

Our first result is as follows
\begin{thm}
\label{thm1}  Let  $\Omega$ be a bounded strictly star-shaped
domain in $\mathbb{R}^N$ with $C^2$-manifold boundary $\partial \Omega$.
Assume $0<q <p <1$ such that $N(1-q)(1-p )-2(1+q )(1+p )>0$ and $0<T<+\infty$. Then there exists $%
\lambda^*(T)\in (\lambda _{1P}^{D_T}, \lambda^T_0)$ such that there holds:  
\begin{itemize}
\item[$(1^o)$] For  $\lambda\in [\lambda^*(T), +\infty)$, problem \eqref{1}-\eqref{4} possesses a periodic least energy solution  $u_{\lambda}^T$  such that   
\begin{equation*}
	\left\{\begin{aligned}
	&\Phi_{\lambda,T}(u_{\lambda}^T)>0, &&\mbox{if} ~~\lambda \in [\lambda^*(T), \lambda^T_0),\\
		&\Phi_{\lambda,T}(u_{\lambda}^T)=0, &&\mbox{if} ~~ \lambda =\lambda^T_0,\\
	&\Phi_{\lambda,T}(u_{\lambda}^T)<0, &&\mbox{if} ~~\lambda \in (\lambda^T_0, +\infty).
	\end{aligned}
	\right.
\end{equation*}
Moreover,   for all $\lambda\in [\lambda^*(T), +\infty)$, $u_{\lambda}^T$ satisfies  $\Phi_{\lambda,T}^{\prime\prime }(u_{\lambda}^T)>0$ and $u_{\lambda}^T \geq 0$ in $D_T$ with $u_{\lambda}^T\in C^{1,\gamma}(\overline{D_T})\cap C^{2}({D_T})$
for some $\gamma\in (0,1)$. 
\item[$(2^o)$] If $\lambda=\lambda^*(T)$, then problem \eqref{1}-\eqref{4} admits a nonnegative periodic least energy solution  $u_{\lambda^*(T)}^T$ which is compactly supported in $\Omega$. 
\item[$(3^o)$]  If  $\lambda\in (\lambda^*(T), +\infty)$, then there exists a nonnegative periodic least energy solution of \eqref{1}-\eqref{4} which is not compactly supported in $\Omega$. Moreover, if $\lambda\in [\lambda_{0}^T,+\infty)$,  every least energy periodic solution of \eqref{1}-\eqref{4} is not compactly supported in $\Omega$. 
\item[$(4^o)$] For any  $\lambda <\lambda _{1P}^{D_T}$, problem  \eqref{1}-\eqref{4} cannot have a weak solution.
\end{itemize}
\end{thm}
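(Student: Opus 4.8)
The plan is to obtain the least energy solutions as minimizers of $\Phi_{\lambda,T}$ on the constrained Nehari set $M_{\lambda,T}$, and to read items $(1^o)$--$(4^o)$ off the interplay between the fibering maps $t\mapsto\Phi_{\lambda,T}(tu)$, the generalized Rayleigh quotients $\lambda_0^T(\cdot),\lambda_{1P}^{D_T}(\cdot)$, and the Pohozaev functional. I would start from the Pohozaev identity. After the standard elliptic bootstrap --- the sublinearity $0<q<p<1$ only improves integrability, so the non-Lipschitz character of $f_\lambda$ is no obstruction, and $\partial\Omega\in C^{2}$ yields regularity up to the boundary --- every weak solution $u$ of \eqref{1}-\eqref{4} belongs to $C^{1,\gamma}(\overline{D_T})\cap C^{2}(D_T)$. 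Testing the equation with $x\cdot\nabla_x u$ and using the $z$-periodicity and the Dirichlet condition on $\partial\Omega$, one gets
\[
 P_{\lambda,T}(u)=-\frac{1}{2N}\int_{-T}^{T}\!\!\int_{\partial\Omega}(x\cdot\nu)\,|\partial_\nu u|^{2}\,\mathrm{d}\sigma\,\mathrm{d}z\le 0 ,
\]
the sign coming from the strict star-shapedness $x\cdot\nu>0$. Three consequences will be used throughout: $(a)$ every weak solution lies in $M_{\lambda,T}$, hence the least energy level equals $\widehat m_{\lambda,T}:=\inf_{M_{\lambda,T}}\Phi_{\lambda,T}$, and any minimizer on $M_{\lambda,T}$ that happens to be a genuine critical point is a least energy solution; $(b)$ $P_{\lambda,T}(u)=0$ iff $\partial_\nu u\equiv 0$ on $\partial\Omega\times(-T,T)$, i.e. iff $u$ is compactly supported in $\Omega$; $(c)$ since $\Phi_{\lambda,T}(u)-P_{\lambda,T}(u)=\tfrac1N\int_{D_T}|\nabla_x u|^{2}$, every compactly supported weak solution satisfies $\Phi_{\lambda,T}(u)=\tfrac1N\int_{D_T}|\nabla_x u|^{2}>0$.

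For the fibering analysis, fix $u\neq 0$ and set $\phi_u(t)=\Phi_{\lambda,T}(tu)$. Since $0<q<p<1$, $\phi_u(0)=0$, $\phi_u>0$ near $0$, $\phi_u(t)\to+\infty$, and for $\lambda$ above the ray threshold $\phi_u'$ has exactly two positive zeros $t_-(u)<t_+(u)$, a local maximum and a local minimum of $\phi_u$; this splits $\mathcal N_\lambda=\mathcal N_\lambda^{+}\cup\mathcal N_\lambda^{-}$ with $\Phi_{\lambda,T}''>0$ on $\mathcal N_\lambda^{+}$. Solving the system $\phi_u'(t)=0,\ \phi_u(t)=0$ (resp. $\phi_u'(t)=0,\ P_{\lambda,T}(tu)=0$) for $(t,\lambda)$ gives the closed forms \eqref{Lambdaexpression}, \eqref{LpoT} of $\lambda_0^T(u),\lambda_{1P}^{D_T}(u)$, together with the facts that $\phi_u(t_+(u,\lambda))$ is $>0$ for $\lambda<\lambda_0^T(u)$, $=0$ at $\lambda=\lambda_0^T(u)$, $<0$ above, and that if $t=1$ is a critical point of $\phi_u$ with $P_{\lambda,T}(u)\le 0$, then $\lambda\ge\lambda_{1P}^{D_T}(u)$; passing to infima over $u$ one invokes Section~\ref{sec:NGRQ} for $0<\lambda_{1P}^{D_T}<\lambda_0^T<+\infty$ and for the attainment of $\lambda_0^T(\cdot)$. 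For the minimization, the two crucial facts are that $\mathcal N_\lambda$ is bounded in $W_{per,0}^{1,2}(D_T)$ --- here $p<1$ is essential: the Nehari identity and the Sobolev inequality give $\|u\|^{2}\le\lambda C\|u\|^{p+1}$ on $\mathcal N_\lambda$ --- and that the embeddings $W_{per,0}^{1,2}(D_T)\hookrightarrow L^{p+1}(D_T),L^{q+1}(D_T)$ are compact. A bounded minimizing sequence $v_k\rightharpoonup\bar v$ then converges in $L^{p+1},L^{q+1}$; the one-sided constraint $P_{\lambda,T}\le 0$ precludes the (energy-vanishing) concentrating sequences, so $\bar v\neq 0$; and the projection of $\bar v$ onto $\mathcal N_\lambda^{+}$ stays in $M_{\lambda,T}$ --- because $t\mapsto P_{\lambda,T}(t\bar v)=\phi_{\bar v}(t)-\tfrac{t^{2}}N\int|\nabla_x\bar v|^{2}$ is decreasing on $[t_-(\bar v),t_+(\bar v)]$ --- and is a minimizer $u_\lambda^T$, which, replacing $u_\lambda^T$ by $|u_\lambda^T|$, may be taken $\ge 0$.

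It remains to see that $u_\lambda^T$ solves the equation, to define $\lambda^*(T)$, and to conclude. If $P_{\lambda,T}(u_\lambda^T)<0$ the $P$-constraint is inactive, so $u_\lambda^T$ minimizes $\Phi_{\lambda,T}$ on $\mathcal N_\lambda^{+}$, which is a natural constraint thanks to the non-degeneracy $\Phi_{\lambda,T}''(u_\lambda^T)>0$ guaranteed by $(q,p)\in\mathcal E_s(N)$; hence $\Phi_{\lambda,T}'(u_\lambda^T)=0$. If $P_{\lambda,T}(u_\lambda^T)=0$, a Lagrange-multiplier argument for the pair of constraints, combined with the extremality of the corresponding value of $\lambda$, forces the multiplier of the $P$-constraint to vanish, so again $\Phi_{\lambda,T}'(u_\lambda^T)=0$. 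One then sets $\lambda^*(T):=\inf\{\lambda>\lambda_{1P}^{D_T}:\ \widehat m_{\lambda,T}\ \text{is attained at a nonzero weak solution}\}$ and shows $\lambda_{1P}^{D_T}<\lambda^*(T)<\lambda_0^T$: the lower bound from the implication ``weak solution $\Rightarrow\lambda\ge\lambda_{1P}^{D_T}$'' and the strict inequalities forced by $\mathcal E_s(N)$, the upper bound from $(c)$ against the fact that $\widehat m_{\lambda,T}\le 0$ for $\lambda\ge\lambda_0^T$ (for such $\lambda$ pick $u$ with $\lambda_0^T(u)\le\lambda$; then $\Phi_{\lambda,T}(t_+(u)u)\le 0$, hence also $P_{\lambda,T}(t_+(u)u)\le 0$, so $t_+(u)u\in M_{\lambda,T}$; this also yields attainment for $\lambda\ge\lambda_0^T$). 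Then $(1^o)$ follows: $u_\lambda^T$ exists for $\lambda\ge\lambda^*(T)$, is a least energy solution by $(a)$, satisfies $\Phi_{\lambda,T}''(u_\lambda^T)>0$, $u_\lambda^T\ge 0$ and $u_\lambda^T\in C^{1,\gamma}(\overline{D_T})\cap C^{2}(D_T)$, and the sign of $\Phi_{\lambda,T}(u_\lambda^T)=\widehat m_{\lambda,T}=\inf_u\phi_u(t_+(u,\lambda))$ is governed by $\lambda$ versus $\lambda_0^T=\inf_u\lambda_0^T(u)$. For $(2^o)$, at $\lambda=\lambda^*(T)$ the constrained minimum must be attained on $\{P_{\lambda,T}=0\}$ (else $\widehat m_{\lambda,T}$ would be attained at a solution for some $\lambda$ slightly below $\lambda^*(T)$, against its definition), whence $P_{\lambda^*(T),T}(u^T_{\lambda^*(T)})=0$ and $(b)$ gives compact support in $\Omega$. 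For $(3^o)$, when $\lambda>\lambda^*(T)$ the minimizer has $P_{\lambda,T}(u_\lambda^T)<0$, so it is not compactly supported; and if $\lambda\ge\lambda_0^T$, every least energy solution $w$ has $\Phi_{\lambda,T}(w)=\widehat m_{\lambda,T}\le 0$, while a compactly supported $w$ would satisfy $\Phi_{\lambda,T}(w)>0$ by $(c)$, a contradiction. Finally $(4^o)$: a weak solution $u$ for some $\lambda$ lies in $M_{\lambda,T}$, so $t=1$ is a critical point of $\phi_u$ with $P_{\lambda,T}(u)\le 0$, hence $\lambda\ge\lambda_{1P}^{D_T}(u)\ge\lambda_{1P}^{D_T}$; so no weak solution exists for $\lambda<\lambda_{1P}^{D_T}$.

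The main obstacle is the passage from the constrained minimizer to a genuine weak solution in the borderline case $\lambda=\lambda^*(T)$, where the Pohozaev constraint is active and the minimizer is a priori only a Karush--Kuhn--Tucker point of the two constraints: showing the $P$-multiplier vanishes there, and pinning $\lambda^*(T)$ strictly between $\lambda_{1P}^{D_T}$ and $\lambda_0^T$, is exactly where $(q,p)\in\mathcal E_s(N)$ --- through the non-degeneracy $\Phi_{\lambda,T}''>0$ that makes $\mathcal N_\lambda^{+}$ a true natural constraint --- is indispensable. A secondary but genuine difficulty is the exclusion of the concentrating minimizing sequences, for which it is the one-sided Pohozaev constraint, not the Nehari condition alone, that restores compactness.
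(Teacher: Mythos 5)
Your overall skeleton (minimization of $\Phi_{\lambda,T}$ over $M_{\lambda,T}$, the Rayleigh quotients $\lambda_0^T(\cdot),\lambda_{1P}^{D_T}(\cdot)$, the Pohozaev identity, nondegeneracy from $(q,p)\in\mathcal E_s(N)$) matches the paper, but the decisive step --- producing a genuine weak solution at the borderline value $\lambda=\lambda^*(T)$, where the constraint $P_{\lambda,T}\le 0$ is active --- is not actually proved. Your claim that ``a Lagrange-multiplier argument for the pair of constraints, combined with the extremality of the corresponding value of $\lambda$, forces the multiplier of the $P$-constraint to vanish'' does not survive inspection: writing the KKT relation $\mu_0 D\Phi_{\lambda,T}(u)+\mu_1 D\Phi'_{\lambda,T}(u)+\mu_2 DP_{\lambda,T}(u)=0$ and testing with $u$ gives, using $\Phi'_{\lambda,T}(u)=0$, the identity $\mu_1\Phi''_{\lambda,T}(u)+\mu_2\,\frac{d}{dt}P_{\lambda,T}(tu)\big|_{t=1}=0$; since $\Phi''_{\lambda,T}(u)>0$ (Lemma \ref{pro}) and $\frac{d}{dt}P_{\lambda,T}(tu)\big|_{t=1}<0$ (Proposition \ref{pradd}), this is perfectly consistent with $\mu_2>0$ and $\mu_1>0$, so nothing forces $\mu_2=0$, and you never explain how ``extremality of $\lambda$'' enters analytically. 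The paper avoids this issue entirely: the multiplier argument (Proposition \ref{Lag}) is invoked only where $P_{\lambda,T}(u_\lambda)<0$, and the solution at $\lambda^*(T)$ is instead obtained as a strong $W^{1,2}_{per,0}(D_T)$ limit of weak solutions $u_{\lambda_m}$ with $\lambda_m\downarrow\lambda^*(T)$, $\lambda_m\in Z$, via the convergence-of-minimizers Lemma \ref{app}; the weak-solution property passes to the limit, and elliptic regularity together with Corollary \ref{cor1} converts $P_{\lambda^*(T),T}=0$ into compact support.

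A second, related gap is structural: you define $\lambda^*(T)$ as an infimum of those $\lambda$ for which $\hat{\Phi}_{\lambda,T}$ is attained at a weak solution, and then use, without proof, that solutions exist for every $\lambda\ge\lambda^*(T)$ (the set need not be upward closed a priori), that for $\lambda>\lambda^*(T)$ one can choose a minimizer with $P_{\lambda,T}<0$ (needed for $(3^o)$), and that at $\lambda=\lambda^*(T)$ the infimum is attained at all and attained on $\{P_{\lambda,T}=0\}$ (needed for $(2^o)$ --- your parenthetical ``else it would be attained for some $\lambda$ slightly below'' presupposes a continuity of minimizers in $\lambda$ that you never establish). These are exactly the points secured in the paper by the sets $\tilde{\mathcal G}^T_\lambda$ and $Z$, the sup-type definition of $\lambda^*(T)$, Proposition \ref{Zset}, and the right-continuity Lemma \ref{app}/Remark \ref{remapp}, none of which appear in your proposal. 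Two minor points: on the bounded cylinder $D_T$ there is no concentration to exclude --- in Lemma \ref{le1e} the real issues are that the weak limit might vanish or the convergence fail to be strong, handled via the rescaling $\kappa>1$ and the sign of $\hat{\Phi}_{\lambda,T}$ (and, at $\lambda=\lambda_0^T$, the argument of Lemma \ref{prop:EXlam0}), not via the Pohozaev constraint; and the strict inequality $\lambda_{1P}^{D_T}<\lambda^*(T)$ does not follow merely from the implication ``weak solution $\Rightarrow\lambda\ge\lambda_{1P}^{D_T}$''.
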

\begin{rem}
 Note that the assumption $0<q <p <1$ and  $N(1-q)(1-p )-2(1+q )(1+p )>0$ implies  $N\geq 3$, for $N \in \mathbb{N}$.
\end{rem}
In the following result, we derive some basic properties of periodic least energy solutions  depending on the value of the parameter $T>0$
\begin{thm}\label{thm2} Assume that the assumption of Theorem \ref{thm1} is satisfied.
 \begin{itemize}
	\item[$(1^o)$] There exist $d>0$ and $T_0> 0$ such that for any $T>T_0$ and $\lambda\in [\lambda^*(T), \lambda^*(T)+d)$ the   least energy solution  $u_{\lambda}^T$ of \eqref{1}-\eqref{4}  is periodically nontrivial, i.e., $\int_{D_T }|(u_{\lambda}^T)_z|^{2}\mathrm{d}x\mathrm{d}z\neq 0$. 
	\item[$(2^o)$]  There exists $T_1> 0$ such that for each  $T\in (0,T_1)$, any compactly supported in $\Omega$ periodic least energy  solution  $u_{\lambda^*(T)}^T$ of \eqref{1}-\eqref{4} has no compact support in $D_T$.
 \end{itemize}
\end{thm}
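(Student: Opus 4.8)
Both parts rest on the interplay between the period $2T$ and the intrinsic length $\sigma_\lambda=\lambda^{-(1-q)/(2(p-q))}$ attached, via the Remark after Theorem~\ref{thm:KKSZ}, to the compactly supported radial solutions of $(1^{*}_{\mathbb{R}^{M}})$.

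\textbf{Part $(1^o)$.} The plan is a comparison of energy levels. First I would note that a \emph{periodically trivial} weak solution of \eqref{1}--\eqref{4} is precisely a function $u(z,x)=v(x)$ with $v$ a weak solution of $-\Delta_x v=f_\lambda(v)$ in $\Omega$, $v|_{\partial\Omega}=0$, and that $\Phi_{\lambda,T}(u)=2T\,E^\Omega_\lambda(v)$, where $E^\Omega_\lambda$ is the corresponding energy functional on $\Omega$. Hence, if the least energy solution $u^T_\lambda$ were periodically trivial, minimality among all nonzero weak solutions would force $v$ to be a least energy solution of the $\Omega$-problem, so $\Phi_{\lambda,T}(u^T_\lambda)=2T\,c^\Omega_\lambda$ with $c^\Omega_\lambda>0$ the least energy level on $\Omega$. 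Testing the generalized Rayleigh quotient $\lambda_0^T(\cdot)$ with $z$-independent functions gives $\lambda^*(T)<\lambda_0^T\le\lambda_0^\Omega$ for every $T$; I would then fix $d>0$ and $T_0>0$ so that $[\lambda^*(T),\lambda^*(T)+d)$ stays in a compact subinterval of $(0,\lambda_0^\Omega)$ for all $T>T_0$, which yields a uniform bound $c^\Omega_\lambda\ge c_0>0$, whence a periodically trivial $u^T_\lambda$ would satisfy $\Phi_{\lambda,T}(u^T_\lambda)\ge 2Tc_0\to\infty$.

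It then suffices to exhibit one \emph{periodically nontrivial} element of $M_{\lambda,T}$ whose energy stays bounded as $T\to\infty$. I would take a fixed bump $\phi\in C_c^\infty(\mathbb R^{N+1})$ depending nontrivially on $z$, rescale it to $\phi_\varepsilon(y)=\phi(y/\varepsilon)$, and translate it so that $\operatorname{supp}\phi_\varepsilon\subset(-T,T)\times B_\delta(x_0)\subset(-T,T)\times\Omega$ — possible once $\varepsilon<\min\{T,\delta\}$, using $B_\delta(x_0)\subset\Omega$ from the strict star-shapedness of $\Omega$ — and then pass to its Nehari point $t_*\phi_\varepsilon$ (the fibering map $t\mapsto\Phi_{\lambda,T}(t\phi_\varepsilon)$ tends to $0^{+}$ as $t\to0^{+}$ and to $+\infty$ as $t\to+\infty$, so a critical $t_*>0$ exists). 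A scaling computation using the Nehari relation shows that $P_{\lambda,T}(t_*\phi_\varepsilon)$ equals a strictly negative multiple of $\int_{D_T}(|\nabla_x\phi_\varepsilon|^2+|(\phi_\varepsilon)_z|^2)\,\mathrm{d}x\,\mathrm{d}z$ plus a strictly positive multiple of $\int_{D_T}|\phi_\varepsilon|^{q+1}\,\mathrm{d}x\,\mathrm{d}z$ — the negative sign coming from $p<1$, which forces $\tfrac1{2^{\ast}}<\tfrac12<\tfrac1{p+1}$ — and since the ratio of these integrals is of order $\varepsilon^{-2}$, choosing $\varepsilon$ small enough makes $P_{\lambda,T}(t_*\phi_\varepsilon)<0$. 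Then $t_*\phi_\varepsilon\in M_{\lambda,T}$, it is periodically nontrivial, and $\Phi_{\lambda,T}(t_*\phi_\varepsilon)$ is finite and independent of $T$ (and bounded for $\lambda$ in the above interval). Consequently $\Phi_{\lambda,T}(u^T_\lambda)=\inf_{M_{\lambda,T}}\Phi_{\lambda,T}\le\Phi_{\lambda,T}(t_*\phi_\varepsilon)<2Tc_0$ for $T>T_0$, which is incompatible with $\Phi_{\lambda,T}(u^T_\lambda)\ge 2Tc_0$; hence $u^T_\lambda$ is periodically nontrivial.

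\textbf{Part $(2^o)$.} Here I would argue by contradiction, using the uniqueness clause of Theorem~\ref{thm:KKSZ}. Suppose $u:=u^T_{\lambda^*(T)}$ has compact support in $D_T$. By Theorem~\ref{thm1}$(2^o)$, $u$ is nonnegative, compactly supported in $\Omega$, and in $C^{1,\gamma}(\overline{D_T})\cap C^2(D_T)$; extending it by zero to $\mathbb R^{N+1}$ then gives a nonnegative $C^1$ distributional solution $U$ of $-\Delta U=\lambda^*(T)|U|^{p-1}U-|U|^{q-1}U$ on $\mathbb R^{N+1}$ with compact support. Its support is connected: otherwise $U$ would split into nonzero weak solutions $u_1,u_2$ with disjoint supports, and since $u$ has least energy, $\Phi_{\lambda^*(T),T}(u)=\Phi_{\lambda^*(T),T}(u_1)+\Phi_{\lambda^*(T),T}(u_2)\ge 2\,\Phi_{\lambda^*(T),T}(u)$, i.e. $\Phi_{\lambda^*(T),T}(u)\le 0$ — contradicting $\Phi_{\lambda^*(T),T}(u)>0$ (Theorem~\ref{thm1}$(1^o)$, as $\lambda^*(T)<\lambda_0^T$). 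By Theorem~\ref{thm:KKSZ} with $M=N+1\ge3$ (and the Remark following it, at $\lambda=\lambda^*(T)$), $\operatorname{supp}U$ is a ball, $U$ is radial about its center, and $U$ is the unique radial compactly supported solution $u^{N+1}_{\lambda^*(T)}$, whose support is a ball of radius $\sigma_{\lambda^*(T)}R_1$ ($R_1$ being the support radius of the normalized profile $\psi^{N+1}$). Since this ball must sit strictly inside $(-T,T)\times\Omega$, its $z$-extent forces $\sigma_{\lambda^*(T)}R_1<T$. On the other hand $\lambda^*(T)<\lambda_0^T\le\lambda_0^\Omega$ and $\lambda\mapsto\sigma_\lambda$ is decreasing, so $\sigma_{\lambda^*(T)}R_1\ge\sigma_{\lambda_0^\Omega}R_1=:T_1>0$ for every $T$. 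Combining, $T>T_1$; contrapositively, for every $T\in(0,T_1)$ the solution $u^T_{\lambda^*(T)}$ has no compact support in $D_T$.

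\textbf{Main obstacle.} The hard part of $(1^o)$ will be the uniformity in $T$: fixing $d$ and $T_0$ so that, simultaneously for all $T>T_0$ and $\lambda\in[\lambda^*(T),\lambda^*(T)+d)$, both $c^\Omega_\lambda$ is bounded below and the competitor's energy is bounded above — this requires controlling the position of $\lambda^*(T)$ inside $(\lambda_{1P}^{D_T},\lambda_0^T)$ as $T\to\infty$, together with continuity/monotonicity of $c^\Omega_\lambda$ and of the fibering maps. In $(2^o)$ the delicate step is earning the right to invoke Theorem~\ref{thm:KKSZ}: one must check that the zero-extension $U$ is a genuine ($C^1$, distributional) solution across its free boundary and has connected support — precisely where the $C^{1,\gamma}$-regularity and the strict positivity $\Phi_{\lambda^*(T),T}(u^T_{\lambda^*(T)})>0$ from Theorem~\ref{thm1} are used.
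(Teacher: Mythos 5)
Your part $(2^o)$ is essentially sound and, modulo packaging, is the paper's argument: the paper encodes the same information through the quantity $\lambda_{\ast}(D_{T})$ built from the inscribed ball of $D_T$ (Appendix A), showing $\lambda^*(T_m)\geq\lambda_{\ast}(D_{T_m})\to+\infty$ as $T_m\to 0$ while $\lambda^*(T)<\lambda_0^T\leq\lambda_0^\Omega$, whereas you phrase it via the support radius $\sigma_{\lambda^*(T)}R_{N+1}<T$ together with the monotonicity of $\sigma_\lambda$; these are equivalent, and your explicit connectedness-of-support argument via the least-energy property is a reasonable way to earn the application of Theorem \ref{thm:KKSZ}.

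Part $(1^o)$, however, has two genuine gaps. First, the step ``fix $d>0$ and $T_0>0$ so that $[\lambda^*(T),\lambda^*(T)+d)$ stays in a compact subinterval of $(0,\lambda_0^\Omega)$ for all $T>T_0$'' is exactly the nontrivial inequality $\sup_{T>0}\lambda^*(T)<\lambda_0^\Omega$ (the paper's \eqref{barLamb}); you flag it as the ``main obstacle'' but give no argument, while in the paper it is the heart of the proof, established by contradiction through the identity $\lambda_0^{D_\infty}=\lambda_0^\Omega$ and a cut-off minimizing sequence $\phi_K(z,x)=\psi_K(z)\hat u_0(x)$ whose normalizations converge weakly to $0$, contradicting Corollary \ref{cor:EXlam0}. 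Second, your competitor construction fails because the scaling goes the wrong way: for $\phi_\varepsilon(y)=\phi(y/\varepsilon)$ one has
\begin{equation*}
\min_{t>0}\mathcal{R}^1(t\phi_\varepsilon)\sim \lambda_{1P}^{D_T}(\phi_\varepsilon)\sim \varepsilon^{-\frac{2(p-q)}{1-q}}\longrightarrow+\infty \quad\text{as }\varepsilon\to 0,
\end{equation*}
so for a fixed $\lambda<\lambda_0^\Omega$ and $\varepsilon$ small the fibering map $t\mapsto\Phi_{\lambda,T}(t\phi_\varepsilon)$ is strictly increasing and has \emph{no} Nehari point at all (the inference ``$\Phi_{\lambda,T}(t\phi_\varepsilon)\to 0^+$ as $t\to 0^+$ and $\to+\infty$ as $t\to\infty$, hence a critical $t_*$ exists'' is a non sequitur, since $\Phi_{\lambda,T}(t\phi_\varepsilon)>0$ for small $t$); and even when Nehari points exist, $P_{\lambda,T}(t_*\phi_\varepsilon)<0$ requires $t_*>t_{1P}(\phi_\varepsilon)$, i.e. $\lambda>\lambda_{1P}^{D_T}(\phi_\varepsilon)$, which shrinking $\varepsilon$ destroys rather than helps. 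The correct competitor (the paper's Proposition \ref{prop7.2}) is a \emph{fixed-size} compactly supported function, and its admissibility for the relevant $\lambda$ rests on the chain $\lambda\geq\lambda_*(\Omega)>\Lambda_{1P}^{\Omega}\geq\lambda_{1P}^{\infty}$, where the lower bound $\lambda\geq\lambda_*(\Omega)$ is extracted from the contradiction hypothesis itself: a periodically trivial least energy solution solves $P_\lambda(\Omega)$, which has no solution below $\lambda_*(\Omega)$ by Lemma \ref{lemDHI}. Your proposal never uses this information, and without it the bounded-energy competitor is not available; in addition, the uniform lower bound $c_\lambda^\Omega\geq c_0>0$ on the least energy level of the $\Omega$-problem over the $\lambda$-range in question is asserted rather than proved (the paper obtains the analogous bound along a convergent sequence $\lambda_m\to\bar\lambda<\lambda_0^\Omega$).
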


%


Regarding the outline of the paper: in Section \ref{sec:Prel}, we present some preliminaries including functional space setting, the Pohozaev functional and the  curve of critical exponents. In Section \ref{sec:NGRQ}, we mention nonlinear generalized Rayleigh's quotients and their extremal. Section \ref{sec:4} contains minimization arguments over the subset $M_{\lambda,T}$ of the Nehari manifold. In Sections \ref{sec:prTh1} and \ref{sec:prTh2} we prove Theorems \ref{thm1} and \ref{thm2}, respectively.  Section \ref{sec:concl} is devoted to conclusion remarks and discussion of open problems.  Afterwards, in the appendices we present some auxiliary results. In Appendix A, we mention some additional results concerning compactly supported solution to problem \ref{Eqw}. In Appendix B, we have some convergence result about the minimizers and finally in Appendix C, we prove a  Pohozaev identity for  solutions periodic in one variable. 

\section{Preliminaries}\label{sec:Prel}
Let   $0<T<+\infty$. Henceforth, we denote   $D_{T}:=(-T,T)\times \Omega$ and $D_\infty:=\mathbb{R} \times \Omega$; $\nabla_x:=(\partial/\partial x_1,..., \partial/\partial x_N)$, $u_z=\partial u/\partial z$ and  $\nabla:=(\partial/\partial z, \partial/\partial x_1,..., \partial/\partial x_N)$. We define the following spaces

\begin{itemize}
	\item $\mathcal{L}^p(-T,T)=L^p(D_{T})$, $1 < p < +\infty$
denotes the space of measurable functions $u: D_{T} \to \mathbb{R}$
with finite norm
$$
\|u\|_{\mathcal{L}^p}=\Big(\int_{D_{T}} |u(z,x)|^p\mathrm{d}x \mathrm{d}z\Big)^{1/p}.
$$
\item $W^{1,2}_0(\Omega)$ 
is the closure of $C^\infty_0(\Omega)$ in the norm 
$$
\|u\|_{1}=\left(\int_{\Omega} |\textcolor[rgb]{1,0,0}{\nabla_x} u |^2 \mathrm{d}x\right )^{1/2}.
$$
\item $W^{1,2}(D_{T})$ is 
the Sobolev space  with the norm
$$
\|u\|_{W^1_2(D_{T})}=\Big(\int_{D_{T}} (|u(z,x)|^2+|u_z|^2+|\nabla_x u|^2)\mathrm{d}x \mathrm{d}z\Big)^{1/2}.
$$
\item $W^{1,2}_{supp}(D_{T})$ is the closure of $C(-T,T; C^\infty_0(\Omega))$ in the norm $\|\cdot\|_{W^1_2(D_{T})}$.
\item $W^{1,2}_{sym}(D_\infty)$ is the closure of symmetric function 
$$
C^\infty_{0, sym}(\mathbb{R} \times \Omega):=\{\phi \in C^\infty_0(\mathbb{R}\times\Omega): ~\phi(z,x)=\phi(-z,x), z\in \mathbb{R}, x \in \Omega\}
$$ 
in the Sobolev space in the norm  $\|\cdot\|_{W^1_2(D_{\infty})}$.
\end{itemize}
It is not hard to show that any $u \in W^{1,2}_{supp}(D_{T})$ satisfies the Poincar\'e inequality 
\begin{equation}\label{POIAN}
	\|u\|_{\mathcal{L}^2(D_{T})}\leq C_{\Omega}(\|\nabla_x u\|_{\mathcal{L}^2(D_{T})})\leq C_{\Omega}(\| u_z\|_{\mathcal{L}^2(D_{T})}+\|\nabla_x u\|_{\mathcal{L}^2}(D_{T})),
\end{equation}
and any $u \in W^{1,2}_{sym}(D_\infty)$ satisfies the Poincar\'e inequality 
\begin{equation*}
	\|u\|_{\mathcal{L}^2(D_\infty)}\leq C_\infty (\|\nabla_x u\|_{\mathcal{L}^2(D_\infty)})\leq C_\infty(\| u_z\|_{\mathcal{L}^2(D_\infty)}+\|\nabla_x u\|_{\mathcal{L}^2(D_\infty)}),
\end{equation*}
where $C_\infty, C_{\Omega}<+\infty$ do not depend on $u$.
Hence $W^{1,2}_{supp}(D_{T})$ enjoys the equivalent norm
$$
\|u\|_{W^{1,2}_{supp}(D_{T})}=\| u_z\|_{\mathcal{L}^2(-T,T)}+\|\nabla_x u\|_{\mathcal{L}^2(-T,T)}.
$$
Notice that  $W^{1,2}_{supp}(D_T) \subset C(-T,T; W^{1,2}_0(\Omega))$ (see e.g., Lemma 1.2 in \cite{lionsF}). Hence for all $u \in W^{1,2}_{supp}(D_T) $, the   periodicity condition
\begin{equation*}
	u(-T,\cdot)=u(T,\cdot),
\end{equation*}
 is well defined and the set of functions 
$$
W_{per,0}^{1,2}(D_T):=\{u\in W^{1,2}_{supp}(D_T):~u(-T,\cdot)=u(T,\cdot)\}
$$  
defines  a closed subspace  in $W^{1,2}_{supp}(D_T)$. In what follows we denote 
$$
\mathcal{D}_{per}:=\{\psi \in C^\infty(-T,T; C^\infty_0(\Omega)): \psi(-T,x)=\psi(T,x),~x \in \Omega\}.
$$
For $T>0$, we define the projection $\pi^T:W^{1,2}_{sym}(D_\infty) \to W_{per,0}^{1,2}(D_T)$ as follows
$$
\pi^T(u)(z,\cdot)=u(z,\cdot),~~~z \in [-T,T], ~~u \in W^{1,2}_{sym}(D_\infty).
$$ 
We call $u \in W_{per,0}^{1,2}(D_T)$  weak solution of \eqref{1}-\eqref{4} if 
\begin{equation}\label{1Crit}
	\int_{D_T} \nabla_x u \cdot \nabla_x \phi ~ \mathrm{d}x\mathrm{d}z+\int_{D_T}  u_z \cdot  \phi_z ~ \mathrm{d}x\mathrm{d}z =  \int_{D_T}f_\lambda(u)\phi ~ \mathrm{d}x\mathrm{d}z, ~ \forall \phi \in \mathcal{D}_{per}.
\end{equation}
Observe that equality \eqref{1Crit} for $u \in W_{per,0}^{1,2}(D_T)$ implies periodicity condition \eqref{3}. Indeed, from \eqref{1Crit} it can be easily
shown that
\begin{align*}
	&0=\int_{T}^{-T}\left[(u_z(z), \phi_z(z))-(\Delta_x u(z), \phi(z))-(f_\lambda(u(z)),\phi(z)) \right]\,\mathrm{d}z\\
	&=\int_{T}^{-T}\left[-(u_{zz}(z), \phi(z))-(\Delta_x u(z), \phi(z))-(f_\lambda(u(z)),\phi(z)) \right]\,\mathrm{d}z\\
	&+(u_z(T), \phi(T))-(u_z(-T),\phi(-T)),
\end{align*}
$\forall \phi \in \mathcal{D}_{per}$. Here $(\cdot,\cdot)$ denotes the conjugation $W^{1,2}_0(\Omega)$ with its dual space.

Hence in view of \eqref{1Crit}, and since $\phi(T)=\phi(-T) \neq 0$ we obtain that the equality $u_z(T)=u_z(-T)$ holds in the distribution sense.  
\medskip

\noindent Consider the Pohozaev functional $P_{\lambda,T}(u)$, for $u \in W^{1,2}_{supp}(D_T) $. Then
\begin{equation}
	\Phi_{\lambda,T}(u)=P_{\lambda,T}(u)+\frac{1}{N}\int_{D_T} |\nabla_x u|^{2}\mathrm{d}x\mathrm{d}z,~~~\forall
	u\in W_{per,0}^{1,2}(D_T).  \label{PandE}
\end{equation}
We need the Pohozaev identity \cite{poh} for  periodic by $z$ solutions of \eqref{1}-\eqref{4} (for the proof see Appendix C)
\begin{lem}
\label{lem1} Assume that $\partial \Omega $ is a $C^{2}$-manifold and $N\geq 3$. Let  $u\in C^{1}(\overline{D_T})\cap C^{2}({D_T})$  be a  solution of \eqref{1}-\eqref{4}. Then there holds the Pohozaev identity 
\begin{equation*}
P_{\lambda,T}(u)=-\frac{1}{2N}\int_{-T}^T\int_{\partial \Omega }\left\vert \frac{%
\partial u}{\partial \nu }\right\vert ^{2}\,(x\cdot \nu (x))\mathrm{d}\sigma
(x).
\end{equation*}
Here $\mathrm{d}\sigma $ denotes the surface measure on $\partial \Omega $.
\end{lem}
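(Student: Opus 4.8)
The plan is to test equation \eqref{1} with the Pohozaev-type multiplier $x\cdot\nabla_x u=\sum_{i=1}^{N}x_i\,\partial u/\partial x_i$ and integrate over $D_T$. Only the spatial variables enter the multiplier: this is the version of the Pohozaev argument adapted to the cylindrical geometry $(-T,T)\times\Omega$, with $\Omega$ star-shaped in $x$ and periodicity in $z$ (a multiplier involving $z\,u_z$ would not produce cancelling terms at $z=\pm T$). Since $u$ is merely $C^2$ in the interior, I would first run the computation on the cylinder $(-T,T)\times\Omega_\varepsilon$ over the interior exhaustion $\Omega_\varepsilon:=\{x\in\Omega:\ \mathrm{dist}(x,\partial\Omega)>\varepsilon\}$, which has $C^2$ boundary for $\varepsilon$ small since $\partial\Omega\in C^2$ and whose boundary tends to $\partial\Omega$ as $\varepsilon\to0^+$, and only then let $\varepsilon\to0^+$. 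The mechanism is that, once all integrations by parts have been performed, every surviving integrand---in the volume and on the moving boundary $\partial\Omega_\varepsilon$---is continuous on $\overline{D_T}$, so the limit is controlled by $u\in C^1(\overline{D_T})$ together with $\nu_\varepsilon\to\nu$, and the boundary conditions on $\partial\Omega$ are used only after passing to the limit.

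For the $z$-term $\int_{D_T}(-u_{zz})(x\cdot\nabla_x u)$ I would integrate by parts in $z$: the boundary terms at $z=\pm T$ cancel by the periodicity conditions \eqref{2}--\eqref{3} (which make $u$, $\nabla_x u$ and $u_z$ coincide at $z=\pm T$), leaving $\int_{D_T}u_z\,(x\cdot\nabla_x u_z)=\tfrac12\int_{D_T}x\cdot\nabla_x(|u_z|^2)$; the divergence theorem in $x$ for fixed $z$ rewrites this as a surface integral over $\partial\Omega$ of $|u_z|^2(x\cdot\nu)$ minus $\tfrac{N}{2}\int_{D_T}|u_z|^2$, and in the limit the surface integral is $0$ since $u_z=0$ on $\partial\Omega$ (differentiate \eqref{4} in $z$), so the $z$-term equals $-\tfrac{N}{2}\int_{D_T}|u_z|^2$. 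For the $x$-Laplacian term $\int_{D_T}(-\Delta_x u)(x\cdot\nabla_x u)$ I would carry out, for each fixed $z$, the classical Rellich--Pohozaev manipulation: an integration by parts, the elementary identity $\nabla_x u\cdot\nabla_x(x\cdot\nabla_x u)=|\nabla_x u|^2+\tfrac12 x\cdot\nabla_x(|\nabla_x u|^2)$, and a further divergence theorem, which give $\bigl(1-\tfrac{N}{2}\bigr)\int_{D_T}|\nabla_x u|^2$ plus the surface term $\int_{-T}^{T}\int_{\partial\Omega}\bigl[\tfrac12|\nabla_x u|^2(x\cdot\nu)-(x\cdot\nabla_x u)\,\partial u/\partial\nu\bigr]\,\mathrm{d}\sigma\,\mathrm{d}z$; since $u=0$ on $\partial\Omega$, one has $\nabla_x u=(\partial u/\partial\nu)\nu$ there, so this surface term collapses to $-\tfrac12\int_{-T}^{T}\int_{\partial\Omega}|\partial u/\partial\nu|^2(x\cdot\nu)\,\mathrm{d}\sigma\,\mathrm{d}z$.

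For the right-hand side, writing $F_\lambda(s)=\tfrac{\lambda}{p+1}|s|^{p+1}-\tfrac{1}{q+1}|s|^{q+1}$ for the primitive of $f_\lambda$, which is $C^1$ because $p,q>0$, we have $f_\lambda(u)(x\cdot\nabla_x u)=x\cdot\nabla_x(F_\lambda(u))$, and the divergence theorem gives $\int_{D_T}f_\lambda(u)(x\cdot\nabla_x u)=-N\int_{D_T}F_\lambda(u)$, the $\partial\Omega$-contribution disappearing since $F_\lambda(0)=0$. Equating the $z$- and $x$-Laplacian contributions with this, dividing the resulting identity by $-N$, and using $\tfrac{N-2}{2N}=\tfrac{1}{2^{\ast}}$, the left-hand side becomes exactly $P_{\lambda,T}(u)=\tfrac{1}{2^{\ast}}\int_{D_T}|\nabla_x u|^2+\tfrac12\int_{D_T}|u_z|^2-\int_{D_T}F_\lambda(u)$, while the right-hand side is $-\tfrac{1}{2N}\int_{-T}^{T}\int_{\partial\Omega}|\partial u/\partial\nu|^2(x\cdot\nu)\,\mathrm{d}\sigma\,\mathrm{d}z$, which is the asserted identity.

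The only genuinely delicate point I foresee is the weak boundary regularity---making the two integrations by parts against $x\cdot\nabla_x u$ rigorous when $u\in C^1(\overline{D_T})\cap C^2(D_T)$ only. The exhaustion by $\Omega_\varepsilon$ handles it, but one must verify that on $\partial\Omega_\varepsilon$ the integrals built from quantities that vanish on $\partial\Omega$---namely $\int_{\partial\Omega_\varepsilon}|u_z|^2(x\cdot\nu_\varepsilon)$ and $\int_{\partial\Omega_\varepsilon}F_\lambda(u)(x\cdot\nu_\varepsilon)$---tend to $0$, by uniform continuity of $u$ and $u_z$ up to $\partial\Omega$ (where $u=u_z=0$), while the remaining surface integral $\int_{\partial\Omega_\varepsilon}\bigl[\tfrac12|\nabla_x u|^2(x\cdot\nu_\varepsilon)-(x\cdot\nabla_x u)\,\partial u/\partial\nu_\varepsilon\bigr]$ converges to its $\partial\Omega$-analogue because $\nabla_x u\in C(\overline{D_T})$ and $\nu_\varepsilon\to\nu$; the reduction $\nabla_x u=(\partial u/\partial\nu)\nu$ on $\partial\Omega$ via \eqref{4}, and the vanishing of the $\partial\Omega$-integrals of $|u_z|^2(x\cdot\nu)$ and $F_\lambda(u)(x\cdot\nu)$, are then applied in the limit.
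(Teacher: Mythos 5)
Your proposal is correct and follows essentially the same route as the paper's Appendix C proof: multiply \eqref{1} by $x\cdot\nabla_x u$, integrate by parts term by term, use the periodicity \eqref{2}--\eqref{3} to cancel the $z=\pm T$ boundary terms, and use $u=0$, $u_z=0$, $F_\lambda(u)=0$ and $\nabla_x u=(\partial u/\partial\nu)\nu$ on $\partial\Omega$ to reduce the surface contribution to $-\tfrac{1}{2N}\int_{-T}^{T}\int_{\partial\Omega}|\partial u/\partial\nu|^{2}(x\cdot\nu)\,\mathrm{d}\sigma\,\mathrm{d}z$. The only difference is your interior exhaustion by $\Omega_\varepsilon$ to justify the integrations by parts under the stated regularity $u\in C^{1}(\overline{D_T})\cap C^{2}(D_T)$, a technical point the paper's sketch leaves implicit.
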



\begin{prop}
\label{pradd} If $\displaystyle{\frac{d}{dt}\Phi_{\lambda,T}(tu):=\Phi_{\lambda,T}^{\prime }(tu)\leq 0}$ for $u\neq 0 $, then $\displaystyle{\frac{d}{dt}P_{\lambda}(tu):=
P^{\prime}_\lambda(tu)<0}$.
\end{prop}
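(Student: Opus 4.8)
The plan is to compute both derivatives explicitly along the ray $t\mapsto tu$ and compare them term by term. Write $a = \int_{D_T}|\nabla_x u|^2$, $b = \int_{D_T}|u_z|^2$, $c = \int_{D_T}|u|^{q+1}$, $d = \int_{D_T}|u|^{p+1}$, all nonnegative, with $a+b>0$ since $u\neq 0$. Then
\begin{align*}
\Phi_{\lambda,T}(tu) &= \tfrac{t^2}{2}(a+b) - \tfrac{\lambda t^{p+1}}{p+1}d + \tfrac{t^{q+1}}{q+1}c,\\
P_{\lambda,T}(tu) &= \tfrac{t^2}{2^\ast}a + \tfrac{t^2}{2}b + \tfrac{t^{q+1}}{q+1}c - \tfrac{\lambda t^{p+1}}{p+1}d,
\end{align*}
so that
\begin{align*}
\Phi_{\lambda,T}'(tu) &= t(a+b) - \lambda t^{p}d + t^{q}c,\\
P_{\lambda,T}'(tu) &= \tfrac{2}{2^\ast}\,t\,a + t\,b + t^{q}c - \lambda t^{p}d.
\end{align*}
Since $2/2^\ast = (N-2)/N < 1$, we have $\tfrac{2}{2^\ast}ta + tb \le t(a+b)$, with strict inequality unless $a=0$. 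Hence $P_{\lambda,T}'(tu) \le \Phi_{\lambda,T}'(tu)$ always, and in fact $P_{\lambda,T}'(tu) < \Phi_{\lambda,T}'(tu)$ whenever $a>0$.

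First I would dispose of the case $a>0$: there the strict inequality above combined with the hypothesis $\Phi_{\lambda,T}'(tu)\le 0$ gives immediately $P_{\lambda,T}'(tu) < 0$, as desired. The remaining case is $a = \int_{D_T}|\nabla_x u|^2 = 0$, i.e.\ $u$ is independent of $x$; but then $u \in W^{1,2}_{per,0}(D_T)$ forces $u\equiv 0$ (by the Poincar\'e inequality \eqref{POIAN}, which bounds $\|u\|_{\mathcal L^2}$ by $\|\nabla_x u\|_{\mathcal L^2}$), contradicting $u\neq 0$. So the degenerate case does not occur and the proof is complete. I would write the argument in this order: state the two derivative formulas, invoke $2/2^\ast<1$ to get the term-by-term comparison, note that equality of the gradient-type terms would force $a=0$ hence $u=0$, and conclude.

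The only subtlety — and the step I would be most careful about — is making sure the comparison is genuinely strict, i.e.\ that one cannot have $a=0$ for a nonzero admissible $u$; this is exactly where the Poincar\'e inequality on $W^{1,2}_{supp}(D_T)$ (hence on its subspace $W^{1,2}_{per,0}(D_T)$) enters, and it is the reason the statement is phrased for $u\neq 0$ rather than for arbitrary $u$. Everything else is a one-line algebraic manipulation. (Note also a harmless typo in the statement: $P_\lambda$ and $P'_\lambda$ there should read $P_{\lambda,T}$ and $P_{\lambda,T}'$.)
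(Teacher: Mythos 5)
Your proposal is correct and is essentially the paper's argument: your term-by-term comparison amounts to the identity $P_{\lambda,T}'(tu)=\Phi_{\lambda,T}'(tu)-\tfrac{2t}{N}\int_{D_T}|\nabla_x u|^2\,\mathrm{d}x\mathrm{d}z$ (i.e.\ relation \eqref{PandE} differentiated along the ray), after which the hypothesis $\Phi_{\lambda,T}'(tu)\le 0$ gives the strict inequality. Your explicit appeal to the Poincar\'e inequality to rule out $\int_{D_T}|\nabla_x u|^2=0$ for $u\neq 0$ just makes precise a step the paper leaves implicit.
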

\begin{proof} Note
$P_{\lambda,T }^{\prime }(tu)=\Phi_{\lambda,T}^{\prime}(tu)-\frac{2t}{N}\int_{D_T}|\nabla_x
	u|^{2}\,\mathrm{d}x\mathrm{d}z.$
Since $\Phi_{\lambda,T}^{\prime }(tu)\leq 0$, this implies $P^{\prime}_{\lambda
}(tu)\leq-(2t/N)\int_{D_T} |\nabla_x u|^{2}\,\mathrm{d}x\mathrm{d}z<0$.\QED
\end{proof}
\noindent Observe that if $\Omega $
is a star-shaped (resp. strictly star-shaped) domain with respect to the origin of coordinates of
$\mathbb{R}^{N}$, then $x\cdot \nu \geq 0$ (resp. $x\cdot \nu >0$) for all $x\in
\partial \Omega $. This and Lemma \ref{lem1} imply
\begin{cor}
\label{cor1} Let $\Omega $ be a bounded star-shaped domain in $\mathbb{R}%
^{N} $ with a $C^{2}$-manifold boundary $\partial \Omega $. Then any 
solution $u\in C^{1}(\overline{D_T})\cap C^{2}({D_T})$ of \eqref{1}-\eqref{4}
satisfies $P_{\lambda,T}(u)\leq 0$. Moreover, if $u(z, \cdot)$ 
has a compact support in $\Omega$, then $P_{\lambda,T}(u)=0$. Furthermore, in the case where $\Omega $ is strictly star-shaped, the converse is also true: if $P_{\lambda}(u)=0$ and $u\in C^{1}(\overline{D_T})\cap C^{2}({D_T})$ is a weak solution of \eqref{1}-\eqref{4}, then $u$ has a compact support in $\Omega$.
\end{cor}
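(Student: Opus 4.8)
The plan is to read off all three assertions directly from the Pohozaev identity of Lemma~\ref{lem1},
$$
P_{\lambda,T}(u)=-\frac{1}{2N}\int_{-T}^{T}\!\!\int_{\partial\Omega}\Bigl|\frac{\partial u}{\partial\nu}\Bigr|^{2}\,(x\cdot\nu(x))\,\mathrm{d}\sigma(x)\,\mathrm{d}z ,
$$
combined with the elementary geometric remark recalled just above it: if $\Omega$ is star-shaped (resp.\ strictly star-shaped) with respect to the origin, then $x\cdot\nu(x)\ge 0$ (resp.\ $>0$) for every $x\in\partial\Omega$. Since $u\in C^{1}(\overline{D_T})\cap C^{2}(D_T)$ solves \eqref{1}--\eqref{4}, Lemma~\ref{lem1} applies to $u$, and the map $(z,x)\mapsto|\partial u/\partial\nu(z,x)|^{2}\,(x\cdot\nu(x))$ is continuous and nonnegative on the compact set $[-T,T]\times\partial\Omega$.

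First I would dispose of the two direct assertions. For $P_{\lambda,T}(u)\le 0$: star-shapedness makes the integrand above nonnegative, so $P_{\lambda,T}(u)$ equals $-\frac{1}{2N}$ times a nonnegative number, hence is $\le 0$. For the second assertion, suppose $u(z,\cdot)$ has compact support in $\Omega$, i.e.\ there is a compact $K\subset\Omega$ with $\operatorname{supp}u(z,\cdot)\subset K$ for all $z\in[-T,T]$; then, $u$ being $C^{1}$ up to $\partial\Omega$, both $u$ and $\nabla u$ vanish on a neighbourhood of $\partial\Omega\times[-T,T]$, so $\partial u/\partial\nu\equiv 0$ on $\partial\Omega\times(-T,T)$, the boundary integral in Lemma~\ref{lem1} vanishes, and $P_{\lambda,T}(u)=0$.

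The substantive direction is the converse under strict star-shapedness. Assuming $P_{\lambda,T}(u)=0$, Lemma~\ref{lem1} gives $\int_{-T}^{T}\!\int_{\partial\Omega}|\partial u/\partial\nu|^{2}(x\cdot\nu(x))\,\mathrm{d}\sigma(x)\,\mathrm{d}z=0$. The integrand is continuous (because $u\in C^{1}(\overline{D_T})$ and $\partial\Omega$ is a $C^{2}$-manifold) and nonnegative, hence it vanishes identically on $[-T,T]\times\partial\Omega$; and since $x\cdot\nu(x)>0$ for every $x\in\partial\Omega$, this forces $\partial u/\partial\nu\equiv 0$ on $\partial\Omega\times[-T,T]$. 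Together with the Dirichlet condition \eqref{4} this is exactly the supplemented boundary condition $\partial u(z)/\partial\nu=0$ on $\partial\Omega$, $z\in(-T,T)$, i.e.\ $u$ is a solution with compact support in $\Omega$ in the sense fixed in the Introduction.

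I do not expect a serious obstacle: the argument is essentially one line once Lemma~\ref{lem1} is available. The two points that deserve care are (i) that the hypothesis $u\in C^{1}(\overline{D_T})$ is precisely what makes $\partial u/\partial\nu$ continuous up to $\partial\Omega$, so that ``$\int(\,\cdot\,)=0$ and $(\,\cdot\,)\ge 0$'' really yields pointwise vanishing; and (ii) the meaning of ``compact support in $\Omega$'': the argument delivers the vanishing of the normal derivative on $\partial\Omega$, which is the defining property of the compactly supported solutions treated here, and one should not expect $P_{\lambda,T}(u)=0$ by itself to force the topological support of $u(z,\cdot)$ to be a compact subset of the open set $\Omega$ (already when $u$ is independent of $z$ and $\Omega$ is a ball whose radius matches the support of the radial compactly supported solution of \ref{Eqw}, the normal derivative vanishes while $\operatorname{supp}u(z,\cdot)=\overline{\Omega}$). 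The assumptions $N\ge 3$ and $\partial\Omega\in C^{2}$ enter only through Lemma~\ref{lem1}, which is already granted.
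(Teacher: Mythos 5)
Your proof is correct and follows essentially the same route as the paper, which obtains the corollary directly from the Pohozaev identity of Lemma~\ref{lem1} together with the observation that $x\cdot\nu\geq 0$ (resp.\ $>0$) on $\partial\Omega$ for star-shaped (resp.\ strictly star-shaped) domains. Your additional remarks on continuity of $\partial u/\partial\nu$ up to the boundary and on the intended meaning of ``compact support in $\Omega$'' (the vanishing of the normal derivative, as fixed in the Introduction) are consistent with how the paper uses the statement.
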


%
%
%
For $u \in W^{1,2}_{per,0}(D_T)\setminus 0$, following \cite{IlEg}, we consider the system
\begin{equation}
\label{sis1} \left\{
\begin{array}{l}
 \int_{D_T}|\nabla u|^2\,\mathrm{d}x\mathrm{d}z-\lambda \int_{D_T}
|u|^{p+1}\,\mathrm{d}x\mathrm{d}z
	+\int_{D_T} |u|^{q+1} \,\mathrm{d}x\mathrm{d}z =\Phi_{\lambda,T}'(u), \\ \\
\int_{D_T}|\nabla u|^2\,\mathrm{d}x\mathrm{d}z-\lambda p  \int_{D_T} |u|^{p+1}\,\mathrm{d}x\mathrm{d}z
	+q\int_{D_T} |u|^{q+1} \,\mathrm{d}x\mathrm{d}z=\Phi_{\lambda,T}''(u),
\\ \\
\displaystyle{\frac{1}{2^*}\int_{D_T}|\nabla u|^2\,\mathrm{d}x \mathrm{d}z- \frac{\lambda}{p+1} \int_{D_T}
|u|^{p+1}\,\mathrm{d}x\mathrm{d}z
	+\frac{1}{q+1} \int_{D_T} |u|^{q+1} \,\mathrm{d}x}\mathrm{d}z=P_{\lambda,T}(u)-\frac{1}{N}\int_{D_T}|u_z|^2\,\mathrm{d}x\mathrm{d}z,
%
\end{array}
\right.
\end{equation}
where $\int_{D_T}|\nabla u|^2\,\mathrm{d}x\mathrm{d}z$, $\lambda \int_{D_T}|u|^{p+1}\,\mathrm{d}x\mathrm{d}z$, $\int_{D_T} |u|^{q+1} \,\mathrm{d}x\mathrm{d}z$ are considered as unknown values. 
A straighforward computation of the determinant leads to
$$
d:=det\begin{pmatrix}
  1 & 1 &  1 \\
  1 & p &  q \\
   \frac{1}{2^*} & \frac{1}{p+1} & \frac{1}{q+1} 
 \end{pmatrix}= (q-p)\Big(\frac{1}{2^*} -\frac{p+q}{(p+1)(q+1)}\Big)=\frac{(q-p)}{2N(p+1)(q+1)}d^*
$$
where $\displaystyle{d^*:=d^*(p,q,N)=N(1-q
)(1-p )-2(1+q )(1+p )}$. Note that $(q,p)\in \mathcal{E}_{s}(N)$ iff $d^*>0$.

\begin{lem}\label{pro}
Assume that $(q,p)\in \mathcal{E}_{s}(N)$. Let $u \in W_{per,0}^{1,2}(D_T)\setminus 0$ be such that $P_{\lambda,T }(u)\leq 0$ and $\Phi_{\lambda,T}^{\prime}(u)=0$, then $\Phi_{\lambda,T}^{\prime \prime }(u)>0$.
\end{lem}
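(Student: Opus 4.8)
The plan is to exploit the linear system \eqref{sis1} to solve for the three quantities $a:=\int_{D_T}|\nabla u|^2$, $b:=\lambda\int_{D_T}|u|^{p+1}$, $c:=\int_{D_T}|u|^{q+1}$ in terms of the right-hand sides, and then read off the sign of $\Phi_{\lambda,T}''(u)=a-pb+qc$ under the hypotheses $\Phi_{\lambda,T}'(u)=0$ and $P_{\lambda,T}(u)\le 0$. First I would note that since $u\neq 0$ lies in $W^{1,2}_{per,0}(D_T)$, we have $c>0$ (the $\mathcal{L}^{q+1}$ norm is positive) and, by the Poincar\'e inequality \eqref{POIAN}, $a>0$. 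Because $(q,p)\in\mathcal E_s(N)$ the determinant $d$ computed above is nonzero, so the system has a unique solution; Cramer's rule expresses $b$ (and $a$, $c$) as a linear combination of the entries $\Phi_{\lambda,T}'(u)=0$, $\Phi_{\lambda,T}''(u)$, and $P_{\lambda,T}(u)-\tfrac1N\int_{D_T}|u_z|^2$.

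The key step is to eliminate $a$ and $b$ to obtain a single scalar identity relating $\Phi_{\lambda,T}''(u)$, $P_{\lambda,T}(u)$, $\int_{D_T}|u_z|^2$ and $c$. Concretely, I would take the appropriate linear combination of the three equations of \eqref{sis1} that annihilates the coefficients of $a$ and of $b$; the resulting coefficient of $c$ will be (up to the sign-definite factor $d^*/(2N(p+1)(q+1))$ times $(p-q)$, which is what makes $d\ne0$) a positive multiple of the $\mathcal{L}^{q+1}$ term, and the combination of the right-hand sides will read, schematically,
\begin{equation*}
\alpha\,\Phi_{\lambda,T}'(u)+\beta\,\Phi_{\lambda,T}''(u)+\gamma\Big(P_{\lambda,T}(u)-\tfrac1N\int_{D_T}|u_z|^2\Big)=\kappa\int_{D_T}|u|^{q+1}\,\mathrm dx\mathrm dz,
\end{equation*}
with explicit constants $\alpha,\beta,\gamma,\kappa$ depending only on $p,q,N$. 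Setting $\Phi_{\lambda,T}'(u)=0$ and using $P_{\lambda,T}(u)\le 0$ together with $\int_{D_T}|u_z|^2\ge 0$ and $\int_{D_T}|u|^{q+1}>0$, I would solve for $\Phi_{\lambda,T}''(u)$ and check, from the signs of $\beta,\gamma,\kappa$ (all controlled by the sign condition $d^*>0$ defining $\mathcal E_s(N)$ and by $0<q<p<1$), that the right-hand side forces $\Phi_{\lambda,T}''(u)>0$.

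The main obstacle, and the part requiring care rather than cleverness, is bookkeeping the signs: one must verify that in the chosen elimination the coefficient $\gamma$ multiplying $P_{\lambda,T}(u)-\tfrac1N\int|u_z|^2$ and the coefficient $\kappa$ multiplying $\int|u|^{q+1}$ have the right relative signs so that $P_{\lambda,T}(u)\le 0$ pushes $\Phi_{\lambda,T}''(u)$ in the positive direction, and that the extra term $-\tfrac1N\int|u_z|^2$ coming from the $z$-variable (absent in the pure $\mathbb R^M$ case) does not spoil the inequality — indeed it should only help, since it appears with the same favourable sign as $P_{\lambda,T}(u)$. A clean way to organize this is to first prove the strict inequality $d\ne0$ (already done above) and the positivity $a,c>0$, then present the single eliminated identity and conclude; alternatively one can invoke Proposition \ref{pradd}-type reasoning, but the linear-algebra route via \eqref{sis1} is the most transparent.
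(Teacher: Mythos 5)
Your plan is correct and is essentially the paper's own argument: both rest on the linear system \eqref{sis1}, the nondegeneracy $d^*>0$ coming from $(q,p)\in\mathcal{E}_{s}(N)$, and the one-sided constraint $P_{\lambda,T}(u)-\tfrac1N\int_{D_T}|u_z|^2\,\mathrm{d}x\mathrm{d}z\le 0$ after setting $\Phi_{\lambda,T}'(u)=0$. The only (immaterial) difference is which unknown you isolate: you eliminate $\int|\nabla u|^2$ and $\lambda\int|u|^{p+1}$ and use $\int_{D_T}|u|^{q+1}>0$, whereas the paper solves for $\int_{D_T}|\nabla u|^2$ and uses its positivity via Poincar\'e; your sign bookkeeping does close, since in your elimination the coefficients of $\Phi_{\lambda,T}''(u)$ and of the Pohozaev term are $\frac{1}{p+1}-\frac{1}{2^{\ast}}>0$ and $1-p>0$ against a determinant whose sign is that of $(p-q)d^*>0$.
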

\begin{proof} 
Let  $\Phi_{\lambda,T}^{\prime}(u)=0$. Then since $d^*> 0$, we found from \eqref{sis1} that
$$
\int_{D_T}|\nabla u|^2\,\mathrm{d}x\mathrm{d}z=\frac{2N(p+1)(q+1)}{d^*}\Phi_{\lambda,T}^{\prime \prime }(u)+ \frac{(p+1)(q+1)}{d^*}(P_{\lambda,T }(u)-\frac{1}{N}T_z(u)),
$$
where $T_z(u):=\int_{D_T}|u_z|^2\,\mathrm{d}x\mathrm{d}z$. The Poincar\'e inequality \eqref{POIAN} entails $\int_{D_T}|\nabla u|^2\,\mathrm{d}x\mathrm{d}z>0$ for $u\neq 0$. Hence the inequalities $P_{\lambda,T }(u)-\frac{1}{N}T_z(u) \leq 0$ and $d^*> 0$ imply that $\Phi_{\lambda,T}^{\prime \prime }(u)>0$.
\QED
\end{proof}

\section{Nonlinear generalized Rayleigh's quotients}\label{sec:NGRQ}
Our approach will be based on using a \textit{nonlinear generalized Rayleigh
quotient method} (see \cite{ilyaReil}). For $u\neq 0$,  we first introduce the so-called zero energy level Rayleigh's quotient \cite{DiazHerIlCOUNT}
\begin{equation*}
\mathcal{R}^0(u)=\frac{\frac{1}{2}\int_{D_T }|\nabla u|^{2}\mathrm{d}x\mathrm{d}z+\frac{1}{{q +1}%
}\int_{D_T }|u|^{{q +1}}\mathrm{d}x\mathrm{d}z}{\frac{1}{{p +1}}\int_{D_T }|u|^{{%
p +1}}\mathrm{d}x\mathrm{d}z}  
\end{equation*}%
and consider 
\begin{equation*}
\mathcal{R}^0(tu)=\frac{\frac{t^{1-p }}{2}\int_{D_T }|\nabla
u|^{2}\mathrm{d}x\mathrm{d}z+\frac{t^{q -p }}{{q +1}}\int_{D_T }|u|^{{q +1}%
}\mathrm{d}x\mathrm{d}z}{\frac{1}{{p +1}}\int_{D_T }|u|^{{p +1}}\mathrm{d}x\mathrm{d}z},~~t>0,~~u\neq 0.
\end{equation*}%
Notice that for any $u\neq 0$ and $\lambda \in \mathbb{R}$, 
\begin{equation*}
\text{if}~\mathcal{R}^0(u)=\lambda ,~~\text{then}%
~~\Phi_{\lambda,T}(u)=0.  
\end{equation*}%
%
%
%
%
%
%
%
%
%
%
%
%
%
%
It is easy to see that $\frac{d}{dt} \mathcal{R}^0(tu)=0$ if and only if 
\begin{equation*}
(1-p )\frac{t^{-p }}{2}\int_{D_T }|\nabla u|^{2}\mathrm{d}x\mathrm{d}z+(q -p
)\frac{t^{q -p -1}}{{q +1}}\int_{D_T }|u|^{{q +1}}\mathrm{d}x\mathrm{d}z=0
\end{equation*}%
and that the only solution to this equation is 
\begin{equation*}
 t_{0}(u)=\left(\frac{2(p-q)}{(1-p)(q+1)}\frac{\int_{D_T }|u|^{{q +1}}\mathrm{d}x\mathrm{d}z}{\int_{D_T }|\nabla u|^{2}\mathrm{d}x\mathrm{d}z} \right)^{\frac{1}{1-q}}. 
\end{equation*}
Let us emphasize that $t_{0}(u)$ is a value where the function $\mathcal{R}^0(tu)$
attains its global minimum. Substituting $t_{0}(u)$ into $\mathcal{R}^0(t)$ we
obtain the  \textit{nonlinear generalized Rayleigh quotient}: 
\begin{equation}\label{Lambdaexpression}
\lambda _{0}^T(u)=\mathcal{R}^0(t_{0}(u)u)=c_{0}^{q ,p }\frac{\left(\int_{D_T}|\nabla u|^{{2}}\mathrm{d}x\mathrm{d}z\right)^{\frac{p-q}{1-q}}\left(\int_{D_T }|u|^{{q +1}}\mathrm{d}x\mathrm{d}z\right)^{\frac{1-p}{1-q}}}{\int_{D_T}|u|^{{p +1}}\mathrm{d}x\mathrm{d}z},
\end{equation}%
where
\begin{equation*}
c_{0}^{q ,p }=\frac{(1-p)(q+1)}{(1-q)(p+1)}\left(\frac{(1-q)(p+1)}{2(q-p)}\right)^\frac{p-q}{1-q}.
\end{equation*}%
Furthermore, it is not difficult to check that 
\begin{eqnarray}\label{T_1}
\Phi_{\lambda_0^T(u),T}(t_0(u)u)=0, \quad\Phi_{\lambda_0^T(u),T}'(t_0(u)u)=0,
\end{eqnarray}
and to prove that
the map $\lambda^T (\cdot ):W^{1,2}_{supp}(D_T)\setminus
0\rightarrow \mathbb{R}$ is a $C^{1}$-functional. Consider 
\begin{equation}
\lambda _{0}^T=\inf_{u\in W_{per,0}^{1,2}(D_T)\setminus 0}\lambda _{0}^T(u)
\label{P3}
\end{equation}%
and
\begin{equation}
\lambda _{0}^\infty=\inf_{u\in W^{1,2}_{sym}(D_\infty)\setminus 0}\lambda _{0}^\infty(u).
\label{P3I}
\end{equation}%
Using Sobolev's, Poincar\'e's  and Holder's inequalities  it can be
shown that 
\begin{equation*}
0<\lambda _{0}^T<+\infty \mbox{ for all }T \in (0,+\infty) \mbox{ and }T=+\infty.
\end{equation*}%
It is easy to show the following
\begin{prop}
\label{propL0}
For $T>0$ ($T=\infty$)
\begin{description}
\item[(i)] If $\lambda <\lambda _{0}^T$, then $\Phi_{\lambda,T}(u)>0$ for any $u
\neq 0$,
\item[(ii)] If $\lambda >\lambda _{0}^T$, then there is $u\in W_{per,0}^{1,2}(D_T) \setminus 0$ ($u\in W^{1,2}_{sym}(D_\infty)\setminus 0$) such that $\Phi_{\lambda,T}(u)<0$.
\end{description}
\end{prop}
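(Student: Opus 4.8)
The plan is to exploit the scaling structure of $\Phi_{\lambda,T}$ together with the defining property of the nonlinear generalized Rayleigh quotient $\lambda_0^T(u)$. Recall from \eqref{T_1} that for any $u\neq 0$ one has $\Phi_{\lambda_0^T(u),T}(t_0(u)u)=0$, and that $t_0(u)$ realizes the global minimum of $t\mapsto \mathcal{R}^0(tu)$; moreover by the very definition $\mathcal{R}^0(w)=\mu$ implies $\Phi_{\mu,T}(w)=0$. The key elementary observation is the monotone dependence of $\Phi_{\lambda,T}(w)$ on $\lambda$ for fixed $w\neq 0$: since
\[
\Phi_{\lambda,T}(w)=\frac{1}{2}\int_{D_T}|\nabla w|^2\,\mathrm{d}x\mathrm{d}z+\frac{1}{q+1}\int_{D_T}|w|^{q+1}\,\mathrm{d}x\mathrm{d}z-\frac{\lambda}{p+1}\int_{D_T}|w|^{p+1}\,\mathrm{d}x\mathrm{d}z,
\]
the map $\lambda\mapsto \Phi_{\lambda,T}(w)$ is affine and strictly decreasing (its slope is $-\frac{1}{p+1}\int_{D_T}|w|^{p+1}\,\mathrm{d}x\mathrm{d}z<0$). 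Hence $\Phi_{\lambda,T}(w)>0$ whenever $\lambda<\mathcal{R}^0(w)$ and $\Phi_{\lambda,T}(w)<0$ whenever $\lambda>\mathcal{R}^0(w)$.

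For part (i): fix $\lambda<\lambda_0^T$ and let $u\neq 0$ be arbitrary. Write $u=t_0(v)v$ is not needed; instead, for any $t>0$ we have $\mathcal{R}^0(tu)\ge \min_{s>0}\mathcal{R}^0(su)=\mathcal{R}^0(t_0(u)u)=\lambda_0^T(u)\ge \lambda_0^T>\lambda$. Taking $t=1$ gives $\mathcal{R}^0(u)\ge\lambda_0^T(u)>\lambda$, so by the monotonicity above $\Phi_{\lambda,T}(u)>0$. Since $u$ was arbitrary, (i) follows. For part (ii): fix $\lambda>\lambda_0^T$. By definition of the infimum there is $v\in W_{per,0}^{1,2}(D_T)\setminus 0$ with $\lambda_0^T(v)<\lambda$. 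Set $w:=t_0(v)v\neq 0$; then $\mathcal{R}^0(w)=\lambda_0^T(v)<\lambda$, and the monotonicity gives $\Phi_{\lambda,T}(w)<0$, which is exactly the claimed existence of a function with negative energy. The case $T=\infty$ is identical, working in $W^{1,2}_{sym}(D_\infty)$ and using $\lambda_0^\infty(\cdot)$, $\lambda_0^\infty$ from \eqref{P3I}.

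The argument is essentially a bookkeeping exercise once the right quantity is identified, so there is no serious obstacle; the only point requiring a line of care is that $t_0(u)$ is indeed the \emph{global} minimizer of $\mathcal{R}^0(tu)$ over $t>0$ (not merely a critical point), which is already asserted in the text preceding \eqref{Lambdaexpression} — it follows because the numerator after the substitution $t^{1-p}$ is, up to positive constants, a sum of a term proportional to $t^{1-p}$ and a term proportional to $t^{q-p}$, i.e. strictly convex in an appropriate variable with a unique interior minimum — and from the fact that all the involved integrals $\int_{D_T}|\nabla u|^2$, $\int_{D_T}|u|^{q+1}$, $\int_{D_T}|u|^{p+1}$ are strictly positive for $u\neq 0$ (using the Poincaré inequality \eqref{POIAN} for the gradient term). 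With that in hand the two implications are immediate.
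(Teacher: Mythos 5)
Your argument is correct: the identity $\Phi_{\lambda,T}(u)=\frac{1}{p+1}\int_{D_T}|u|^{p+1}\,\mathrm{d}x\mathrm{d}z\,\bigl(\mathcal{R}^0(u)-\lambda\bigr)$ together with $\mathcal{R}^0(u)\geq\min_{t>0}\mathcal{R}^0(tu)=\lambda_0^T(u)\geq\lambda_0^T$ gives (i), and evaluating at $w=t_0(v)v$ for a near-minimizer $v$ gives (ii). The paper states this proposition without proof (``It is easy to show''), and your reasoning is exactly the intended argument implicit in the nonlinear generalized Rayleigh quotient construction preceding it, for both $T<\infty$ and $T=\infty$.
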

{Furthermore, we are able to prove}
\begin{lem}\label{prop:EXlam0}
There exists a  minimizer $\hat{u}_0^T \in  W_{per,0}^{1,2}(D_T)\setminus 0$ ($\hat{u}_0^\infty \in  W^{1,2}_{sym}(D_\infty)\setminus 0$) of \eqref{P3} (respectively \eqref{P3I}). Moreover, $D\Phi_{\lambda _{0}^T,T}(\hat{u}_0^T)=0$ and $\Phi_{\lambda _{0}^T,T}%
(\hat{u}_0^T)=0$, for all $T>0$ (respectively $T=+\infty$).
\end{lem}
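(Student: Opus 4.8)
Since the functional $\lambda_0^T(\cdot)$ in \eqref{Lambdaexpression} is $0$-homogeneous and $\lambda_0^T(u)=\min_{t>0}\mathcal R^0(tu)$, the value \eqref{P3} equals $\inf\{\mathcal R^0(v):v\in W^{1,2}_{per,0}(D_T)\setminus 0\}$, and similarly for \eqref{P3I}. My plan is to run the direct method. For \eqref{P3}, take a minimizing sequence $(u_n)$ for $\lambda_0^T(\cdot)$ and, exploiting $0$-homogeneity, normalize it so that $\int_{D_T}|\nabla u_n|^{2}\,\mathrm{d}x\mathrm{d}z=1$ — legitimate because the Poincar\'e inequality \eqref{POIAN} forces $\int_{D_T}|\nabla u|^{2}\,\mathrm{d}x\mathrm{d}z>0$ whenever $u\neq0$. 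Then $(u_n)$ is bounded in $W^{1,2}_{per,0}(D_T)$, so along a subsequence $u_n\rightharpoonup\hat{u}$ weakly there; as $D_T\subset\mathbb R^{N+1}$ is bounded and $1<q+1<p+1<2<2^{\ast}_{N+1}$ (with $2^{\ast}_{N+1}=2(N+1)/(N-1)$ the critical exponent of the $(N+1)$-dimensional cylinder), Rellich--Kondrachov gives $u_n\to\hat{u}$ strongly in $L^{q+1}(D_T)$ and in $L^{p+1}(D_T)$.

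The key technical step is to rule out $\hat{u}=0$. Assume it; then $\int_{D_T}|u_n|^{q+1}\to0$ and $\int_{D_T}|u_n|^{p+1}\to0$. Interpolating $L^{p+1}$ between $L^{q+1}$ and $L^{2^{\ast}_{N+1}}$ and using the uniform Sobolev bound $\sup_n\|u_n\|_{L^{2^{\ast}_{N+1}}(D_T)}<\infty$, one obtains
\[
\int_{D_T}|u_n|^{p+1}\,\mathrm{d}x\mathrm{d}z\ \le\ C\Big(\int_{D_T}|u_n|^{q+1}\,\mathrm{d}x\mathrm{d}z\Big)^{\theta},\qquad \theta:=\frac{2^{\ast}_{N+1}-(p+1)}{2^{\ast}_{N+1}-(q+1)},
\]
and a short computation gives $\theta-\frac{1-p}{1-q}=\frac{(p-q)(2^{\ast}_{N+1}-2)}{(2^{\ast}_{N+1}-(q+1))(1-q)}>0$. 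Since $\int_{D_T}|\nabla u_n|^{2}\,\mathrm{d}x\mathrm{d}z=1$, this yields
\[
\lambda_0^T(u_n)=c_0^{q,p}\,\frac{\big(\int_{D_T}|u_n|^{q+1}\,\mathrm{d}x\mathrm{d}z\big)^{\frac{1-p}{1-q}}}{\int_{D_T}|u_n|^{p+1}\,\mathrm{d}x\mathrm{d}z}\ \ge\ c\,\Big(\int_{D_T}|u_n|^{q+1}\,\mathrm{d}x\mathrm{d}z\Big)^{\frac{1-p}{1-q}-\theta}\ \longrightarrow\ +\infty,
\]
contradicting $\lambda_0^T<+\infty$. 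Hence $\hat{u}\neq0$, and then weak lower semicontinuity of $u\mapsto\int_{D_T}|\nabla u|^{2}$ together with strong convergence of the lower-order integrals gives $\lambda_0^T(\hat{u})\le\liminf_n\lambda_0^T(u_n)=\lambda_0^T$, so $\hat{u}$ is a minimizer.

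Next set $\hat{u}_0^T:=t_0(\hat{u})\hat{u}$, with $t_0(\cdot)$ the scaling defined just above \eqref{Lambdaexpression}; $0$-homogeneity gives $\lambda_0^T(\hat{u}_0^T)=\lambda_0^T$, while $\mathcal R^0(\hat{u}_0^T)=\lambda_0^T(\hat{u})=\lambda_0^T$, so $\hat{u}_0^T$ actually minimizes $\mathcal R^0$ over the open set $W^{1,2}_{per,0}(D_T)\setminus0$. As $\mathcal R^0$ is $C^1$ there (its denominator $\int_{D_T}|u|^{p+1}$ never vanishes on $W^{1,2}_{per,0}(D_T)\setminus0$), we get $D\mathcal R^0(\hat{u}_0^T)=0$. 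Writing $\mathcal R^0=A/B$ with $A(u)=\tfrac12\int_{D_T}|\nabla u|^{2}+\tfrac{1}{q+1}\int_{D_T}|u|^{q+1}$ and $B(u)=\tfrac{1}{p+1}\int_{D_T}|u|^{p+1}$, and using $A(\hat{u}_0^T)=\lambda_0^T B(\hat{u}_0^T)$, the relation $D\mathcal R^0(\hat{u}_0^T)=0$ is equivalent to $DA(\hat{u}_0^T)=\lambda_0^T\,DB(\hat{u}_0^T)$, i.e.
\[
\int_{D_T}\nabla\hat{u}_0^T\cdot\nabla\phi\,\mathrm{d}x\mathrm{d}z=\int_{D_T}\big(\lambda_0^T|\hat{u}_0^T|^{p-1}\hat{u}_0^T-|\hat{u}_0^T|^{q-1}\hat{u}_0^T\big)\phi\,\mathrm{d}x\mathrm{d}z\quad\text{for all }\phi\in W^{1,2}_{per,0}(D_T),
\]
in particular for all $\phi\in\mathcal D_{per}$; this is exactly the weak formulation \eqref{1Crit} of \eqref{1}--\eqref{4} with $\lambda=\lambda_0^T$, so $D\Phi_{\lambda_0^T,T}(\hat{u}_0^T)=0$. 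Finally, $\Phi_{\lambda_0^T,T}(\hat{u}_0^T)=A(\hat{u}_0^T)-\lambda_0^T B(\hat{u}_0^T)=0$ (equivalently, this is \eqref{T_1} since $t_0(\hat{u}_0^T)=1$).

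The case $T=+\infty$ is the one genuine obstacle. Now $D_\infty=\mathbb R\times\Omega$ is unbounded in $z$, the embedding $W^{1,2}_{sym}(D_\infty)\hookrightarrow L^{r}(D_\infty)$ is not compact, and a minimizing sequence for \eqref{P3I} may leak mass to $z=\pm\infty$. I would handle it by comparison with the family \eqref{P3}: periodic extension gives a value-preserving embedding $W^{1,2}_{per,0}(D_T)\hookrightarrow W^{1,2}_{per,0}(D_{kT})$ for $k\in\mathbb N$, hence $\lambda_0^{kT}\le\lambda_0^T$ and more generally $T\mapsto\lambda_0^T$ is nonincreasing, while testing $\lambda_0^\infty(\cdot)$ on $\pi^T(u)$ gives $\limsup_{T\to\infty}\lambda_0^T\le\lambda_0^\infty$; the reverse inequality, together with passage to the limit in the minimizers $\hat{u}_0^T$ as $T\to\infty$ — where the $z$-symmetry of $W^{1,2}_{sym}(D_\infty)$ pins the concentration and prevents escape — is carried out via the convergence results collected in Appendix B, after which $\hat{u}_0^\infty:=\lim_{T\to\infty}\hat{u}_0^T$ satisfies $D\Phi_{\lambda_0^\infty,\infty}(\hat{u}_0^\infty)=0$ and $\Phi_{\lambda_0^\infty,\infty}(\hat{u}_0^\infty)=0$ exactly as in the bounded case. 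Thus everything on the cylinders $D_T$ is routine direct-method work, and the only nontrivial point is controlling this loss of compactness on $D_\infty$.
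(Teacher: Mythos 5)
Your finite-$T$ argument is correct and is essentially the paper's own proof: normalize a minimizing sequence using the homogeneity of $\lambda_0^T(\cdot)$, pass to a weak limit, exclude a vanishing limit by the H\"older/Sobolev interpolation estimate (your exponent computation $\theta-\tfrac{1-p}{1-q}>0$ matches the paper's, which interpolates against $2^{\ast}=2N/(N-2)$ instead of $2^{\ast}_{N+1}$ -- both are admissible on the bounded cylinder), use weak lower semicontinuity, and then obtain $D\Phi_{\lambda_0^T,T}(\hat u_0^T)=0$, $\Phi_{\lambda_0^T,T}(\hat u_0^T)=0$ from criticality of $\mathcal{R}^0$ at $t_0(\hat u)\hat u$; the paper does the last step through the chain rule for $\lambda_0^T(u)=\mathcal{R}^0(t_0(u)u)$, which is the same computation.

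The genuine gap is the case $T=+\infty$, which is part of the statement. Your plan is to produce $\hat u_0^\infty$ as a $T\to\infty$ limit of the $\hat u_0^T$, but the decisive points -- that $\lambda_0^\infty\le\liminf_{T\to\infty}\lambda_0^T$ and that the limit function is nonzero -- are precisely the loss-of-compactness issue you flag, and nothing you invoke supplies them. Appendix B (Lemma \ref{app}) concerns minimizers of the constrained problem \eqref{min1} as $\lambda_m\downarrow\lambda$ on a \emph{fixed} bounded cylinder and relies on its compact embeddings; it says nothing about $T\to\infty$ limits of minimizers of $\lambda_0^T(\cdot)$. The heuristic that the $z$-symmetry of $W^{1,2}_{sym}(D_\infty)$ ``pins the concentration'' is also not a proof: an even function can consist of two bumps centred at $z=\pm R_n$ with $R_n\to\infty$, so evenness alone does not restore compactness (one would need, e.g., a monotone rearrangement in $|z|$ of the minimizing sequence, or a concentration-compactness argument), and in any case the finite-$T$ minimizers $\hat u_0^T$ need not be even in $z$, so there is no reason their limit lies in $W^{1,2}_{sym}(D_\infty)$. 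The paper avoids this detour entirely: it runs the same direct method simultaneously in $W^{1,2}_{sym}(D_\infty)$, asserting weak convergence together with strong $\mathcal{L}^\gamma$-convergence of the normalized minimizing sequence on $D_\infty$ and then repeating the interpolation and lower-semicontinuity steps verbatim. To close your proof you must either justify that compactness on $D_\infty$ directly for a (suitably symmetrized) minimizing sequence of \eqref{P3I}, or actually establish the comparison $\lim_{T\to\infty}\lambda_0^T=\lambda_0^\infty$ together with nonvanishing and convergence of $\hat u_0^T$ in the symmetric class -- none of which is in your sketch.
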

\begin{proof} Let $T>0$ ($T=\infty$) and let $(u_n)_{n=1}^\infty$ be a minimizing sequence of \eqref{P3} (\eqref{P3I}). Due to the homogeneity of $\lambda _{0}^T(u)$ ($\lambda _{0}^\infty(u))$, one can find constants $a_n$ such that  $\lambda _{0}^T(a_nu_n)=\lambda _{0}^T(u_n)$ ($\lambda _{0}^\infty(a_nu_n)=\lambda _{0}^\infty(u_n)$) and $\|v_n\|_1=1$, where $v_n=a_nu_n$ for $n\in\mathbb{N}$. Hence $(v_n)$ is a minimizing sequence of \eqref{P3} (\eqref{P3I}) which is bounded in $W_{per,0}^{1,2}(D_T)$ ($W^{1,2}_{sym}(D_\infty)$). Thus there exists a subsequence,  again denoted by $(v_n)$, such that
 $v_n\rightharpoonup v_0$ weakly in $W^{1,2}(D_T)$ (respectively $W^{1,2}_{sym}(D_\infty)$) and strongly $%
v_n\rightarrow v_0$ in $\mathcal{L}^\gamma(-T,T)$, $0<T\leq +\infty$, for $1<\gamma<2^{\ast }$ and for some $v_0 \in W_{per,0}^{1,2}(D_T)$ (respectively $v_0 \in W^{1,2}_{sym}(D_\infty)$). Note that $v_0 \neq 0$. Indeed, in the opposite case we have
\begin{align*}
	\lambda _{0}^T(v_n)=c_{0}^{q ,p }\frac{\left(\int_{D_T }|v_n|^{{q +1}}\mathrm{d}x\mathrm{d}z\right)^{\frac{1-p}{1-q}}}{\int_{D_T}|v_n|^{{p +1}}\mathrm{d}x\mathrm{d}z}\geq C \left(\int_{D_T}|v_n|^{{q +1}}\mathrm{d}x\mathrm{d}z\right)^{-\frac{(2^*-2)(p-q)}{(2^*-(1+q))(1-q)}}\to +\infty,
\end{align*}
since by Sobolev's, Poincar\'e's and Holder's inequalities,
$$
\int_{D_T }|v_n|^{{p +1}}\mathrm{d}x\mathrm{d}z\leq \left(\int_{D_T}|v_n|^{{q +1}}\mathrm{d}x\mathrm{d}z\right)^{\frac{\gamma}{1+q}}\left(\int_{D_T}|v_n|^{2^*}\mathrm{d}x\mathrm{d}z\right)^{\frac{1+p-\gamma}{2^*}}\leq C\left(\int_{D_T}|v_n|^{{q +1}}\mathrm{d}x\mathrm{d}z\right)^{\frac{\gamma}{1+q}}
$$
where $\gamma=\frac{(2^*-(1+p))(1+q)}{2^*-(1+q)}$ and $0<C<+\infty$ does not depend on $n\in\mathbb{N}$. The same holds for $(v_n) \subset  W^{1,2}_{sym}(D_\infty)$. Observe that $\lambda _{0}^T(\cdot)$ is weakly lower semicontinuous and bounded below functional on $ W_{per,0}^{1,2}(D_T)$. This easily yields that  $\hat{u}_0^T:=v_0 \in  W_{per,0}^{1,2}(D_T)\setminus 0$ is a  minimizer of $\lambda _{0}^T(u)$. Then 
\begin{equation*}
D\lambda _{0}^T(\hat{u}_0^T)(\phi )=D\mathcal{R}^0(t \hat{u}_0^T)|_{t=t_{0}(\hat{u}_0^T)}(\phi )+\frac{\partial }{%
\partial t}\mathcal{R}^0(t_{0}(\hat{u}_0^T)\hat{u}_0^T)(Dt_{0}(\hat{u}_0^T)(\phi ))=0,~~\forall \phi \in
\mathcal{D}_{per}.
\end{equation*}%
Since $\partial \mathcal{R}^0(t_{0}(\hat{u}_0^T)\hat{u}_0^T)/\partial t=0$, this implies
\begin{equation*}
D\mathcal{R}^0(t_{0}(\hat{u}_0^T)\hat{u}_0^T)(\phi )=t_{0}(\hat{u}_0^T)\cdot
D\mathcal{R}^0(u)|_{u=t_{0}(\hat{u}_0^T)\hat{u}_0^T}(\phi )=0,~~~\forall \phi \in \mathcal{D}_{per}.
\end{equation*}%
Now taking into account that the equality $\lambda _{0}^T=\lambda _{0}^T(\hat{u}_0^T)$
implies $\Phi_{\lambda _{0}^T,T}(\hat{u}_0^T)=0,$ we obtain 
\begin{equation*}
0=D\mathcal{R}^0(u)|_{u=t_{0}(\hat{u}_0^T)\hat{u}_0^T}=\frac{1}{\int_{D_T}|u|^{{p +1}}\mathrm{d}x\mathrm{d}z}%
\cdot D\Phi_{\lambda _{0}^T,T}(u)|_{u=t_{0}(\hat{u}_0^T)\hat{u}_0^T},
\end{equation*}%
which yields that $D\Phi_{\lambda _{0}^T,T}(\hat{u}_0^T)=0$.\QED
\end{proof}
\noindent From the proof of Lemma \ref{prop:EXlam0} one gets also the following
\begin{cor}\label{cor:EXlam0}
 Let $T>0$ ($T=\infty$) and let $(u_n)_{n=1}^\infty$ be a minimizing sequence of \eqref{P3} (\eqref{P3I}) such that $(\|u_n\|_1)_{n=1}^\infty$ is bounded.  Then up to a subsequence, there exists a non-zero strong limit point of 	$(u_n)_{n=1}^\infty$ in  $W_{per,0}^{1,2}(D_T)$ (respectively $W^{1,2}_{sym}(D_\infty)$).
\end{cor}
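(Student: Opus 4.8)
The plan is to re-use, essentially verbatim, the extraction and non-triviality arguments from the proof of Lemma~\ref{prop:EXlam0}, and then to add one extra step that promotes weak convergence to strong convergence by reading off the size of the gradients from the explicit formula \eqref{Lambdaexpression} for $\lambda_0^T(\cdot)$.

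\emph{First,} by hypothesis $\sup_n\|u_n\|_1<+\infty$, so the Poincar\'e inequality \eqref{POIAN} shows that $(u_n)$ is bounded in $W_{per,0}^{1,2}(D_T)$ (respectively in $W^{1,2}_{sym}(D_\infty)$). Passing to a subsequence, still denoted $(u_n)$, we obtain $u_n\rightharpoonup v_0$ weakly in the ambient Sobolev space and $u_n\to v_0$ strongly in $\mathcal{L}^\gamma(-T,T)$ for every $1<\gamma<2^\ast$, exactly as in the proof of Lemma~\ref{prop:EXlam0}. The same dichotomy used there applies: were $v_0=0$, the Sobolev--Poincar\'e--Holder estimate would force $\lambda_0^T(u_n)\to+\infty$, contradicting that $(u_n)$ is a minimizing sequence of \eqref{P3} (resp. \eqref{P3I}); hence $v_0\neq 0$. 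Since $\lambda_0^T(\cdot)$ (resp. $\lambda_0^\infty(\cdot)$) is weakly lower semicontinuous and $\lambda_0^T(v_0)\geq\lambda_0^T$ by definition of the infimum, we conclude $\lambda_0^T(u_n)\to\lambda_0^T=\lambda_0^T(v_0)$.

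\emph{Second,} to upgrade this to strong convergence I would exploit the structure of \eqref{Lambdaexpression}. Because $1<q+1<p+1<2<2^\ast$, the strong $\mathcal{L}^\gamma$-convergence gives $\int_{D_T}|u_n|^{q+1}\,\mathrm{d}x\mathrm{d}z\to\int_{D_T}|v_0|^{q+1}\,\mathrm{d}x\mathrm{d}z$ and $\int_{D_T}|u_n|^{p+1}\,\mathrm{d}x\mathrm{d}z\to\int_{D_T}|v_0|^{p+1}\,\mathrm{d}x\mathrm{d}z$, the latter limit being strictly positive since $v_0\neq 0$. Substituting these limits into \eqref{Lambdaexpression} and using $\lambda_0^T(u_n)\to\lambda_0^T(v_0)$ yields
$$\left(\int_{D_T}|\nabla u_n|^{2}\,\mathrm{d}x\mathrm{d}z\right)^{\frac{p-q}{1-q}}\longrightarrow\left(\int_{D_T}|\nabla v_0|^{2}\,\mathrm{d}x\mathrm{d}z\right)^{\frac{p-q}{1-q}},$$
and since $\tfrac{p-q}{1-q}>0$ this gives $\int_{D_T}|\nabla u_n|^{2}\,\mathrm{d}x\mathrm{d}z\to\int_{D_T}|\nabla v_0|^{2}\,\mathrm{d}x\mathrm{d}z$. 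Equipping $W_{per,0}^{1,2}(D_T)$ with the equivalent Hilbert norm $u\mapsto(\int_{D_T}|\nabla u|^{2}\,\mathrm{d}x\mathrm{d}z)^{1/2}$ (legitimate by \eqref{POIAN}), the combination of $u_n\rightharpoonup v_0$ and $\|u_n\|\to\|v_0\|$ forces $u_n\to v_0$ strongly in $W_{per,0}^{1,2}(D_T)$; the $D_\infty$-case is identical with $W^{1,2}_{sym}(D_\infty)$ in place of $W_{per,0}^{1,2}(D_T)$. Thus $v_0$ is a non-zero strong limit point of $(u_n)$.

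I expect the only genuinely delicate point to be the compactness of the embedding into $\mathcal{L}^\gamma(-T,T)$ in the case $T=+\infty$ (needed for the convergence of the lower-order integrals), which however is already used and justified in the proof of Lemma~\ref{prop:EXlam0}; the remaining steps are a routine combination of weak lower semicontinuity of $\lambda_0^T$, the compact Sobolev embeddings for the subcritical lower-order terms, and the Hilbert-space fact that weak convergence together with convergence of norms implies strong convergence.
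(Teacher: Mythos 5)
Your proof is correct and follows essentially the paper's route: the paper gives no separate proof of Corollary~\ref{cor:EXlam0} but derives it ``from the proof of Lemma~\ref{prop:EXlam0}'', i.e.\ exactly the extraction of a weak limit, the non-vanishing dichotomy, and the identification $\lambda_0^T(v_0)=\lambda_0^T$, with the strong-convergence upgrade (convergence of the lower-order integrals plus formula \eqref{Lambdaexpression} forcing convergence of the gradient norms, hence strong convergence in the Hilbert norm) left implicit there and spelled out by you. The only delicate point, compactness of the embedding into $\mathcal{L}^\gamma$ when $T=+\infty$, is inherited from the paper's own argument, as you correctly note.
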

\noindent We shall also need the following Rayleigh's quotients: 
\begin{align*}
& \mathcal{R}^P(u)=\frac{\frac{1}{2^{\ast }}\int_{D_T}|\nabla_x u|^{2}\,\mathrm{d}x\mathrm{d}z +\frac{1}{2}\int_{D_T}|u_z|^{2}\,\mathrm{d}x\mathrm{d}z +\frac{1}{{q +1}}\int_{D_T}|u|^{{q +1}}\mathrm{d}x\mathrm{d}z}{\frac{1}{{p +1}%
}\int_{D_T}|u|^{{p +1}}\mathrm{d}x\mathrm{d}z},  
 \\
& \mathcal{R}^1(u)=\frac{\int_{D_T}|\nabla_x u|^{2}\,\mathrm{d}x\mathrm{d}z+\int_{D_T}|u_z|^{2}\,\mathrm{d}x\mathrm{d}z + \int_{D_T}|u|^{{q +1}}\mathrm{d}x\mathrm{d}z}{\int_{D_T}|u|^{{p +1}}\mathrm{d}x\mathrm{d}z}, ~~u\neq 0.
\end{align*}%
Notice that for any $u\neq 0$ and $\lambda \in \mathbb{R}$, 
\begin{equation}
\mathcal{R}^P(u)=\lambda \Leftrightarrow P_{\lambda,T}(u)=0~~\mbox{and}%
~~\mathcal{R}^1(u)=\lambda \Leftrightarrow \Phi_{\lambda,T}^{\prime }(u)=0.  \label{RPR1}
\end{equation}%
Arguing as above for $\mathcal{R}^0(tu),$ it can be shown that
each of  functions $\mathcal{R}^P(tu)$, $\mathcal{R}^1(tu)$ attains its global minimum at some point, $t_{P}(u)$
and $t_{1}(u)$, respectively.

Moreover, it is easily seen that the following
equation 
\begin{equation*}
 \mathcal{R}^P(tu)=\mathcal{R}^1(tu),~~~t>0,  
\end{equation*}%
has a unique solution 
\begin{equation*}
t_{1P}(u)=\left(\frac{(p-q)}{(q+1)}\frac{\int_{D_T }|u|^{{q +1}}\mathrm{d}x\mathrm{d}z}{\frac{(2^*-p-1)}{2^*}\int_{D_T }|\nabla_x u|^{2}\mathrm{d}x\mathrm{d}z+\frac{1-p}{2}\int_{D_T }| u_z|^{2}\mathrm{d}x\mathrm{d}z} \right)^{\frac{1}{1-q}}. 
\end{equation*}%
Thus, we are able to introduce the following \textit{nonlinear generalized Rayleigh quotient} 
\begin{equation}\label{LpoT}
	\begin{aligned}
		\lambda _{1P}^{D_T}(u):=& \mathcal{R}^P(t_{1P}(u)u)=\mathcal{R}^1(t_{1P}(u)u)
		\\
		=&c_{1P}^{q ,p }\frac{\left(\frac{2^*-1-q}{2^*}\int_{D_T }|\nabla_x u|^{2}\mathrm{d}x\mathrm{d}z+\frac{1-q}{2}\int_{D_T }| u_z|^{2}\mathrm{d}x\mathrm{d}z\right)\left(\int_{D_T }|u|^{{q +1}}\mathrm{d}x\mathrm{d}z\right)^{\frac{1-p}{1-q}}}{\left(\int_{D_T}|u|^{p+1}\mathrm{d}x\mathrm{d}z\right) \left(\frac{(2^*-p-1)}{2^*}\int_{D_T }|\nabla_x u|^{2}\mathrm{d}x\mathrm{d}z+\frac{1-p}{2}\int_{D_T }| u_z|^{2}\mathrm{d}x\mathrm{d}z\right)^{\frac{1-p}{1-q}}},
		\end{aligned}
\end{equation}
where 
\begin{equation*}
c_{1P}^{q ,p }=\frac{p+1}{p-q}\left(\frac{p-q}{1+q}\right)^{\frac{1-p}{1-q}}. 
\end{equation*}%
%
Notice that 
\begin{equation*}
P_{\lambda _{1P}^{D_T}(u)}(t_{1P}(u)u)=0\quad\mbox{and } \Phi_{\lambda
_{1P}(u),T}^{\prime }(t_{1P}(u)u)=0,~~\forall u\neq 0.  
\end{equation*}%
Recall that by Corollary \ref{cor1}, if $u \in C^{1}(\overline{D_T})\cap C^{2}({D_T})$ is a compactly supported solution of \eqref{1}, then  
$$
P_{\lambda,T}(u)=0,~~~\Phi_{\lambda,T}^{\prime }(u)=0
$$
 which implies that $t_{1P}(u)=1$ and $\lambda=\lambda _{1P}^{D_T}(u)$.
Consider 
\begin{equation}
\lambda _{1P}^{D_T}=\inf_{u\in W_{per,0}^{1,2}(D_T)\setminus 0}\lambda _{1P}^{D_T}(u).  \label{PPoh2}
\end{equation}%
Using Sobolev's and H\"{o}lder's inequalities it can be shown (see, e.g., 
\cite{IlEg}) that 
\begin{equation*}
0<\lambda _{1P}^{D_T}<+\infty .
\end{equation*}
\noindent Similar to \eqref{PPoh2} we introduce 
\begin{equation*}
	\lambda_{1P}^{\infty}=\inf_{u\in W_{per,0}^{1,2}(D_{\infty})\setminus 0}\lambda_{1P}^{D_\infty}(u),
\end{equation*}%
where
\begin{equation*}
	\begin{aligned}
		\lambda_{1P}^{D_\infty}(u):=	
		c_{1P}^{q ,p } \frac{\left(\frac{2^*-1-q}{2^*}\int_{D_\infty}|\nabla_x u|^{2}\mathrm{d}x\mathrm{d}z+\frac{1-q}{2}\int_{D_\infty}| u_z|^{2}\mathrm{d}x\mathrm{d}z\right)\left(\int_{D_\infty}|u|^{{q +1}}\mathrm{d}x\mathrm{d}z\right)^{\frac{1-p}{1-q}}}{\left(\int_{D_\infty}|u|^{p+1}\mathrm{d}x\mathrm{d}z\right) \left(\frac{(2^*-p-1)}{2^*}\int_{D_\infty }|\nabla_x u|^{2}\mathrm{d}x\mathrm{d}z+\frac{1-p}{2}\int_{D_\infty}| u_z|^{2}\mathrm{d}x\mathrm{d}z\right)^{\frac{1-p}{1-q}}}.
	\end{aligned}
\end{equation*}
As above, we  have
\begin{equation*}
	0<\lambda_{1P}^{\infty}<+\infty.
\end{equation*}

\begin{lem}
\label{propEst} For any $u\in W_{per,0}^{1,2}(D_T)\setminus 0$,

\begin{description}
\item[(i)] $ \mathcal{R}^P(tu)>\mathcal{R}^1(tu)$ iff $t\in (0,t_{1P}(u))$ and $%
 \mathcal{R}^P(tu)<\mathcal{R}^1(tu)$ iff $t\in (t_{1P}(u),+\infty )$;
\item[(ii)] $t_{1}(u)<t_{1P}(u)<t_{P}(u)$;
 \item[(iii)] $t_1(u)<t_{1P}(u)<t_0(u)$.
\end{description}
\end{lem}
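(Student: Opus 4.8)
The plan is to express everything through a handful of integrals, reduce part (i) to a one–line sign computation, and reduce parts (ii)--(iii) to comparing the explicit formulas for the four critical points. Fix $u\in W_{per,0}^{1,2}(D_T)\setminus 0$ and write
\[
A_x=\int_{D_T}|\nabla_x u|^{2}\,\mathrm{d}x\mathrm{d}z,\quad A_z=\int_{D_T}|u_z|^{2}\,\mathrm{d}x\mathrm{d}z,\quad A=A_x+A_z,\quad a=\tfrac{1}{2^{\ast}}A_x+\tfrac12A_z,
\]
and $B=\int_{D_T}|u|^{q+1}\,\mathrm{d}x\mathrm{d}z$, $C=\int_{D_T}|u|^{p+1}\,\mathrm{d}x\mathrm{d}z$. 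By the Poincar\'e inequality \eqref{POIAN} we have $A_x>0$, hence $a>0$ and $A>0$, and since $u\neq0$ also $B,C>0$. Two elementary facts will be used repeatedly: $A>2a$, because $\tfrac12A-a=\bigl(\tfrac12-\tfrac1{2^{\ast}}\bigr)A_x=\tfrac1N A_x>0$ since $2^{\ast}>2$; and $A-(p+1)a=\tfrac{2^{\ast}-p-1}{2^{\ast}}A_x+\tfrac{1-p}{2}A_z>0$ since $p<1$. In this notation $\mathcal{R}^1(tu)=\tfrac1C\bigl(t^{1-p}A+t^{q-p}B\bigr)$ and $\mathcal{R}^P(tu)=\tfrac{p+1}{C}\bigl(t^{1-p}a+\tfrac{1}{q+1}t^{q-p}B\bigr)$ for $t>0$.

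For (i), subtracting gives
\[
\mathcal{R}^P(tu)-\mathcal{R}^1(tu)=\frac{t^{q-p}}{C}\Bigl(\frac{p-q}{q+1}\,B-t^{1-q}\bigl(A-(p+1)a\bigr)\Bigr),\qquad t>0.
\]
Since $t^{q-p}/C>0$ and $A-(p+1)a>0$, the factor in parentheses is strictly decreasing in $t^{1-q}$, is positive for small $t$, tends to $-\infty$ as $t\to+\infty$, and vanishes exactly when $t^{1-q}=\frac{(p-q)B}{(q+1)(A-(p+1)a)}$, i.e. exactly at $t=t_{1P}(u)$ (the value recorded in the text). Hence $\mathcal{R}^P(tu)>\mathcal{R}^1(tu)$ on $(0,t_{1P}(u))$ and $\mathcal{R}^P(tu)<\mathcal{R}^1(tu)$ on $(t_{1P}(u),+\infty)$, which is (i); note this part does not use $(q,p)\in\mathcal{E}_s(N)$.

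For (ii) and (iii), solving $\frac{d}{dt}\mathcal{R}^1(tu)=0$ and $\frac{d}{dt}\mathcal{R}^P(tu)=0$ exactly as was done for $t_0(u)$ yields
\[
t_1(u)^{1-q}=\frac{(p-q)B}{(1-p)A},\qquad t_P(u)^{1-q}=\frac{(p-q)B}{(1-p)(q+1)a},
\]
while from the text $t_0(u)^{1-q}=\frac{2(p-q)B}{(1-p)(q+1)A}$ and $t_{1P}(u)^{1-q}=\frac{(p-q)B}{(q+1)(A-(p+1)a)}$. All four share the positive factor $(p-q)B$, so each ordering reduces to comparing denominators. The inequalities $t_{1P}(u)<t_P(u)$ and $t_{1P}(u)<t_0(u)$ both reduce, after cancelling the factor $q+1$ (and, for the second, using $1-\tfrac{1-p}{2}=\tfrac{1+p}{2}$), to $A>2a$, which we already know. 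The remaining inequality $t_1(u)<t_{1P}(u)$ reads $(1-p)A>(q+1)\bigl(A-(p+1)a\bigr)$, i.e. $(p+1)(q+1)a>(p+q)A$; rewriting in $A_x,A_z$ this becomes
\[
\frac{1}{2N}\Bigl((N-2)(1+p)(1+q)-2N(p+q)\Bigr)A_x+\frac{(1-p)(1-q)}{2}\,A_z>0.
\]
The $A_z$-coefficient is positive since $p,q<1$, and the $A_x$-coefficient equals $\frac{1}{2N}d^{\ast}(p,q,N)$, which is positive precisely because $(q,p)\in\mathcal{E}_s(N)$; together with $A_x>0$ this gives the strict inequality. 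Combining the three comparisons yields $t_1(u)<t_{1P}(u)<t_P(u)$ and $t_1(u)<t_{1P}(u)<t_0(u)$.

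Every computation here is elementary; the only point requiring attention is the identity $(N-2)(1+p)(1+q)-2N(p+q)=d^{\ast}(p,q,N)$, which shows that the hypothesis $(q,p)\in\mathcal{E}_s(N)$ is exactly what is needed for $t_1(u)<t_{1P}(u)$ — indeed, for periodically trivial $u$ (so $A_z=0$) that inequality is equivalent to $d^{\ast}>0$. Thus the main obstacle is purely organisational: keeping the four critical points and the two ``gradient'' quantities $A$ and $a$ distinct, and recognising $d^{\ast}$ inside the algebra.
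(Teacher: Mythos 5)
Your proof is correct, and all the explicit formulas check out: the difference $\mathcal{R}^P(tu)-\mathcal{R}^1(tu)=\frac{t^{q-p}}{C}\bigl(\frac{p-q}{q+1}B-t^{1-q}(A-(p+1)a)\bigr)$ gives (i) directly, the closed-form expressions for $t_1(u)^{1-q}$, $t_P(u)^{1-q}$, $t_0(u)^{1-q}$, $t_{1P}(u)^{1-q}$ are right, the comparisons $t_{1P}<t_P$ and $t_{1P}<t_0$ do both reduce to $A>2a=A-\tfrac{2}{N}A_x$, and the identity $(N-2)(1+p)(1+q)-2N(p+q)=d^*(p,q,N)$ is what makes $t_1<t_{1P}$ follow from $(q,p)\in\mathcal{E}_s(N)$ together with $A_x>0$ from the Poincar\'e inequality. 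However, your route is genuinely different from the paper's. The paper never writes down $t_1(u)$ or $t_P(u)$ explicitly: for (i) it uses the limit $\mathcal{R}^P(tu)/\mathcal{R}^1(tu)\to\frac{p+1}{q+1}>1$ as $t\to 0$ plus uniqueness of $t_{1P}(u)$; for (ii) it differentiates the Rayleigh quotients and identifies $\frac{d}{dt}\mathcal{R}^P(tu)|_{t=t_{1P}}$ with $\frac{d}{dt}P_{\lambda_{1P}(u),T}(tu)$ (negative by Proposition \ref{pradd}) and $\frac{d}{dt}\mathcal{R}^1(tu)|_{t=t_{1P}}$ with $\Phi''_{\lambda_{1P}(u),T}(t_{1P}(u)u)$ (positive by Lemma \ref{pro}, where $d^*>0$ enters); for (iii) it uses the relation $(\mathcal{R}^0)'(tu)=\frac{p+1}{t}(\mathcal{R}^1(tu)-\mathcal{R}^0(tu))$ together with part (i). Your computation is more elementary and self-contained — the role of $d^*>0$ becomes a transparent linear inequality in $A_x,A_z$, and you even see that for $z$-independent $u$ the condition is sharp — whereas the paper's argument, by routing everything through $P'$ and $\Phi''$, reuses the structural lemmas (Proposition \ref{pradd}, Lemma \ref{pro}) that are needed elsewhere and avoids any dependence on the exact closed forms of the fibering maps. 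Both proofs are valid; yours trades structural insight for explicitness.
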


\begin{proof} Observe that $ \mathcal{R}^P(tu)/\mathcal{R}^1(tu)\rightarrow
\frac{p +1}{{q +1}}>1$ as $t\rightarrow 0$. Hence, from the
uniqueness of $t_{1P}(u)$ we obtain \textbf{(i)}.

\noindent By \eqref{RPR1} we have $\Phi_{\lambda _{1P}^{D_T}(u), T}^{\prime }(u)=0$.
Therefore Proposition \ref{pradd} implies $\frac{d}{dt}P_{\lambda
_{1P}(u)}(t_{1P}(u)u)<0$. Hence and since
\begin{equation*}
\frac{d}{dt} \mathcal{R}^P(tu)|_{t=t_{1P}(u)}=\frac{p +1}{\int_{D_T} |tu|^{{p +1}%
}\mathrm{d}x\mathrm{d}z}\cdot \frac{d}{dt}P_{\lambda _{1P}^{D_T}(u), T}(tu)|_{t=t_{1P}(u)},
\end{equation*}%
we conclude that $\frac{d}{dt} \mathcal{R}^P(tu)|_{t=t_{1P}(u)}<0$. Now taking
into account that $t_{P}(u)$ is a point of global minimum of $ \mathcal{R}^P(tu)$
we obtain that $t_{1P}(u)<t_{P}(u)$. To prove $t_{1}(u)<t_{1P}(u)$, first
observe that
\begin{equation*}
\frac{d}{dt}\mathcal{R}^1(tu)|_{t=t_{1P}(u)}=\frac{t}{%
\int_{D_T}|tu|^{{p +1}}\mathrm{d}x\mathrm{d}z}\cdot \Phi_{\lambda _{1P}^{D_T}(u), T}^{\prime \prime
}(tu)|_{t=t_{1P}(u)},
\end{equation*}%
and  by Lemma \ref{pro} the equalities $\Phi_{\lambda _{1P}^{D_T}(u), T}^{\prime
}(t_{1P}(u)u)=0$, $P_{\lambda _{1P}^{D_T}(u), T}(t_{1P}(u)u)=0$ imply that \, $\Phi_{\lambda _{1P}^{D_T}(u), T}^{\prime \prime }(t_{1P}(u)u)>0$. Thus $\frac{d}{dt}%
\mathcal{R}^1(t_{1P}(u)u)>0$ and the proof of \textbf{(ii)} follows.\par 
\noindent Note that
$$
(\mathcal{R}^0)'(tu)=\frac{(p+1)}{t}\left(\mathcal{R}^1(tu)-\mathcal{R}^0(tu)\right).
$$
Hence,
\begin{equation*}
	(\mathcal{R}^0)'(tu)=0~\Leftrightarrow~\mathcal{R}^1(tu)=\mathcal{R}^0(tu)
\end{equation*}
is satisfied by $t=t_0(u)$.
Furthermore, if $(\mathcal{R}^0)'(tu)=0$, then 
\begin{equation*}
	0<(\mathcal{R}^0)''(tu)=\frac{p+1}{t}\Big((\mathcal{R}^1)'(tu)-\frac{p+2}{p+1}(\mathcal{R}^0)'(tu)\Big)=\frac{p+1}{t}(\mathcal{R}^1)'(tu).
\end{equation*}
From this and part \textbf{(i)} (note that $\mathcal{R}^P(t_0(u)u)<\mathcal{R}^0(t_0(u)u)=\mathcal{R}^1(t_0(u)u)$) we get the conclusion of part \textbf{(iii)}.
\QED
\end{proof}

\begin{cor}\label{corPG}
\begin{description}
\item[(i)] If $\lambda<\lambda _{1P}^{D_T}$ and $\Phi_{\lambda,T}^{\prime }(u)=0$, then
$P_{\lambda,T}(u)>0$.
\item[(ii) ] For any $\lambda>\lambda _{1P}^{D_T}$, there exists $u \in
W^{1,2}_{supp}(D_T) \setminus \{0 \}$ such that $\Phi_{\lambda,T}^{\prime }(u)=0$ and $%
P_{\lambda,T}(u)< 0$.
\end{description}
\end{cor}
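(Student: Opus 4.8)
Regarding Corollary \ref{corPG}, the plan is to translate both statements into comparisons between $\lambda$ and the fibered Rayleigh quotients $\mathcal{R}^P(\cdot)$, $\mathcal{R}^1(\cdot)$, and then to exploit the ordering of their critical points supplied by Lemma \ref{propEst}. Since for $u\neq0$ the denominators $\frac{1}{p+1}\int_{D_T}|u|^{p+1}\,\mathrm{d}x\mathrm{d}z$ and $\int_{D_T}|u|^{p+1}\,\mathrm{d}x\mathrm{d}z$ are strictly positive, the identities defining $P_{\lambda,T}$ and $\Phi_{\lambda,T}^{\prime}$ refine \eqref{RPR1} to
$$
P_{\lambda,T}(u)>0\ \Leftrightarrow\ \mathcal{R}^P(u)>\lambda,\qquad \Phi_{\lambda,T}^{\prime}(u)=0\ \Leftrightarrow\ \mathcal{R}^1(u)=\lambda,
$$
together with the analogous equivalences with the inequalities reversed. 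I shall also record that $t\mapsto\mathcal{R}^1(tu)$ tends to $+\infty$ both as $t\to0^+$ (since $q-p<0$) and as $t\to+\infty$ (since $1-p>0$), so that, having a unique critical point $t_1(u)$, it is strictly decreasing on $(0,t_1(u))$ and strictly increasing on $(t_1(u),+\infty)$; and I shall use the ordering $t_1(u)<t_{1P}(u)<t_P(u)$ from Lemma \ref{propEst}(ii) together with the dichotomy in Lemma \ref{propEst}(i).

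For part $(\mathrm{i})$, let $u\in W_{per,0}^{1,2}(D_T)\setminus\{0\}$ with $\Phi_{\lambda,T}^{\prime}(u)=0$, i.e.\ $\mathcal{R}^1(u)=\lambda$, and suppose for contradiction that $P_{\lambda,T}(u)\leq0$, i.e.\ $\mathcal{R}^P(u)\leq\mathcal{R}^1(u)$. By Lemma \ref{propEst}(i) this forces $t_{1P}(u)\leq1$, and then $t_1(u)<t_{1P}(u)\leq1$ by Lemma \ref{propEst}(ii), so the interval $[t_{1P}(u),1]$ lies in the increasing branch $(t_1(u),+\infty)$ of $t\mapsto\mathcal{R}^1(tu)$; hence $\lambda_{1P}^{D_T}(u)=\mathcal{R}^1(t_{1P}(u)u)\leq\mathcal{R}^1(u)=\lambda$. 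But by the definition of $\lambda_{1P}^{D_T}$ as an infimum this gives $\lambda_{1P}^{D_T}\leq\lambda_{1P}^{D_T}(u)\leq\lambda<\lambda_{1P}^{D_T}$, a contradiction. Therefore $P_{\lambda,T}(u)>0$.

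For part $(\mathrm{ii})$, fix $\lambda>\lambda_{1P}^{D_T}$ and choose, by definition of the infimum, some $v\in W_{per,0}^{1,2}(D_T)\setminus\{0\}$ with $\lambda_{1P}^{D_T}(v)<\lambda$. On the ray $\{tv:t\geq t_{1P}(v)\}$ the map $t\mapsto\mathcal{R}^1(tv)$ is continuous, equals $\mathcal{R}^1(t_{1P}(v)v)=\lambda_{1P}^{D_T}(v)<\lambda$ at $t=t_{1P}(v)$, and tends to $+\infty$ as $t\to+\infty$; by the intermediate value theorem there is $t^{\ast}>t_{1P}(v)$ with $\mathcal{R}^1(t^{\ast}v)=\lambda$. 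Put $u:=t^{\ast}v\in W_{per,0}^{1,2}(D_T)\subset W^{1,2}_{supp}(D_T)$. Then $\Phi_{\lambda,T}^{\prime}(u)=0$, and since $t^{\ast}>t_{1P}(v)$, Lemma \ref{propEst}(i) yields $\mathcal{R}^P(t^{\ast}v)<\mathcal{R}^1(t^{\ast}v)=\lambda$, i.e.\ $P_{\lambda,T}(u)<0$, as required.

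This corollary is essentially a bookkeeping consequence of Lemma \ref{propEst} and of the variational characterization of $\lambda_{1P}^{D_T}$, so there is no serious obstacle. The only point needing a little care is in $(\mathrm{i})$: one must not presume that $t=1$ already lies on the increasing branch of $\mathcal{R}^1(\cdot\,u)$, which is why the argument routes through $t_1(u)<t_{1P}(u)\leq1$ and the strict monotonicity of $\mathcal{R}^1(tu)$ for $t>t_1(u)$ — the latter itself requiring that one first record the limiting behavior of $\mathcal{R}^1(tu)$ as $t\to0^+$ and $t\to+\infty$, which also justifies the intermediate-value step in $(\mathrm{ii})$.
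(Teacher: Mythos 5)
Your argument is correct and follows essentially the same route as the paper: both parts are reduced via \eqref{RPR1} to comparisons of $\mathcal{R}^1$ and $\mathcal{R}^P$ along rays, with Lemma \ref{propEst} supplying the ordering $t_1(u)<t_{1P}(u)$ and the sign dichotomy, and the definition of $\lambda_{1P}^{D_T}$ closing the contradiction in (i) and furnishing the test function in (ii). The extra care you take with the limiting behavior of $t\mapsto\mathcal{R}^1(tu)$ to justify monotonicity and the intermediate-value step only makes explicit what the paper leaves implicit.
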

\begin{proof}  Let us prove \textbf{(i)}. Since $\Phi_{\lambda,T}^{\prime }(u)=0$, then $\mathcal{R}^1(u)=\lambda$. Arguing by contradiction, suppose $P_{\lambda,T}(u)\leq 0$. Then $\mathcal{R}^P(u)\leq \lambda=\mathcal{R}^1(u)$, and therefore by Lemma \ref{propEst}, $1 \geq t_{1,P}(u)>t_1(u)$. Note that $(\mathcal{R}^1)'(tu)>0$ for $t>t_1(u)$. Hence $\mathcal{R}^1(u)\geq \lambda _{1P}^{D_T}(u)>\lambda$ from which we get a contradiction. \\We now prove \textbf{(ii)}. From $\lambda>\lambda _{1P}^{D_T}$, we deduce that there exists $v \in
W^{1,2}_{supp}(D_T) \setminus 0 $ such that $\lambda>\lambda _{1P}^{D_T}(v)=\mathcal{R}^1_v(t_{1,P}(v))$. Therefore there exists $\theta>t_{1P}(v)$ such that $\mathcal{R}^1(\theta v)=\lambda$. We have also from Lemma \ref{propEst}  that $\mathcal{R}^P(\theta v)< \mathcal{R}^1(\theta v)=\lambda$ from which we get $P_{\lambda, T}(\theta v)< 0$. Setting $u=\theta v$ we complete the proof of assertion \textbf{(ii)}.\QED
\end{proof}
	%
\begin{cor}
	$\lambda _{1P}^{D_T}<\lambda _{0}^T$.
\end{cor}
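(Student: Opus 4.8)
\textbf{Proof proposal for $\lambda_{1P}^{D_T}<\lambda_0^T$.}

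The plan is to compare the two nonlinear Rayleigh quotients $\lambda_{1P}^{D_T}(u)$ and $\lambda_0^T(u)$ pointwise in $u$, using the monotonicity properties of $\mathcal{R}^0(tu)$, $\mathcal{R}^1(tu)$, $\mathcal{R}^P(tu)$ along rays already established in Lemma \ref{propEst}, and then pass to the infimum. Fix an arbitrary $u\in W_{per,0}^{1,2}(D_T)\setminus 0$. Recall from \eqref{RPR1} and the definitions that $\lambda_0^T(u)=\mathcal{R}^0(t_0(u)u)$ with $t_0(u)$ the global minimizer of $t\mapsto\mathcal{R}^0(tu)$, while $\lambda_{1P}^{D_T}(u)=\mathcal{R}^1(t_{1P}(u)u)=\mathcal{R}^P(t_{1P}(u)u)$ with $t_{1P}(u)$ the unique crossing point of $\mathcal{R}^1(t\,\cdot)$ and $\mathcal{R}^P(t\,\cdot)$. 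By Lemma \ref{propEst}(iii) we have the strict ordering $t_1(u)<t_{1P}(u)<t_0(u)$.

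Next I would exploit that $t\mapsto\mathcal{R}^1(tu)$ attains its global minimum at $t_1(u)$ and is therefore strictly increasing on $(t_1(u),+\infty)$ (this is exactly the fact $(\mathcal{R}^1)'(tu)>0$ for $t>t_1(u)$ used in the proof of Corollary \ref{corPG}). Since $t_1(u)<t_{1P}(u)<t_0(u)$, evaluating $\mathcal{R}^1$ at these two points gives
\begin{equation*}
\lambda_{1P}^{D_T}(u)=\mathcal{R}^1(t_{1P}(u)u)<\mathcal{R}^1(t_0(u)u).
\end{equation*}
On the other hand, at $t=t_0(u)$ the relation $(\mathcal{R}^0)'(tu)=0$ forces $\mathcal{R}^1(t_0(u)u)=\mathcal{R}^0(t_0(u)u)=\lambda_0^T(u)$, as recorded in the proof of Lemma \ref{propEst}(iii). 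Combining the two displays yields the strict pointwise inequality $\lambda_{1P}^{D_T}(u)<\lambda_0^T(u)$ for every $u\neq 0$.

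Finally, taking the infimum over $u\in W_{per,0}^{1,2}(D_T)\setminus 0$ gives $\lambda_{1P}^{D_T}\le\lambda_0^T$; to upgrade this to a strict inequality I would use that $\lambda_0^T$ is attained at some $\hat u_0^T$ by Lemma \ref{prop:EXlam0}, so that $\lambda_{1P}^{D_T}\le\lambda_{1P}^{D_T}(\hat u_0^T)<\lambda_0^T(\hat u_0^T)=\lambda_0^T$. The main (and only real) obstacle is bookkeeping: making sure the monotonicity/crossing facts quoted from Lemma \ref{propEst} and from the proof of Corollary \ref{corPG} are invoked with the correct strict inequalities and in the correct $t$-ranges, and that the pointwise strict gap survives passing to the infimum, which is why the attainment statement of Lemma \ref{prop:EXlam0} is needed rather than a bare infimum comparison.
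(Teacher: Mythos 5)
Your proposal is correct and follows essentially the same route as the paper: both use the ordering $t_1(u)<t_{1P}(u)<t_0(u)$ from Lemma \ref{propEst}(iii), the identity $\mathcal{R}^1(t_0(u)u)=\mathcal{R}^0(t_0(u)u)$, and the strict monotonicity of $\mathcal{R}^1(tu)$ beyond $t_1(u)$ to get the pointwise strict inequality, then the attained minimizer $\hat u_0^T$ from Lemma \ref{prop:EXlam0} to conclude $\lambda_{1P}^{D_T}\leq\lambda_{1P}^{D_T}(\hat u_0^T)<\lambda_0^T(\hat u_0^T)=\lambda_0^T$.
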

\begin{proof} 
By Lemma \ref{propEst} \textbf{(iii)}, for any $u \in
W^{1,2}_{supp}(D_T) \setminus 0 $, $t_1(u)<t_{1,P}(u)<t_0(u)$. In addition, for any $u \in
W^{1,2}_{supp}(D_T) \setminus 0$, $\mathcal{R}^0(t_0(u)u)=\mathcal{R}^1(t_0(u)u)$. Therefore,
since $(\mathcal{R}^1)'(tu)>0$ for $t \in  (t_1(u),t_0(u))$,  we have 
\begin{equation*}
	\lambda^T_0(u)=\mathcal{R}^0(t_0(u)u)=\mathcal{R}^1(t_0(u)u)>\mathcal{R}^1(t_{1P}(u)u)=\mathcal{R}^P(t_{1P}(u)u).
\end{equation*}
By Lemma \ref{prop:EXlam0}, there exists a  minimizer $\hat{u}_0^T \in  W_{per,0}^{1,2}(D_T)\setminus 0$ of $\lambda _{0}^T(u)$. We thus have
$$
\lambda _{0}^T=\mathcal{R}^0(t_0(\hat{u}_0^T)\hat{u}_0^T)>\mathcal{R}^P(t_{1P}(\hat{u}_0^T)\hat{u}_0^T)\geq \lambda _{1P}^{D_T}.
$$ \QED
\end{proof}
\begin{cor}\label{nonexist} 
Let $\Omega $ be a bounded star-shaped domain in $\mathbb{R}%
^{N}$ with $C^{2}$-manifold boundary $\partial \Omega $. Then for any $%
\lambda <\lambda _{1P}^{D_T}$ problem  \eqref{1}-\eqref{4}  cannot have a
weak solution.
\end{cor}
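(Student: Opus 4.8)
The plan is a short proof by contradiction. Assume $\lambda<\lambda_{1P}^{D_T}$ and that \eqref{1}-\eqref{4} admits a nontrivial weak solution $u\in W_{per,0}^{1,2}(D_T)\setminus 0$ (the trivial solution $u\equiv0$ being excluded throughout the paper); I will derive that $u$ would have to satisfy simultaneously $P_{\lambda,T}(u)>0$ and $P_{\lambda,T}(u)\le 0$.

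First I would check that $\Phi_{\lambda,T}'(u)=0$. The weak formulation \eqref{1Crit} is stated only for test functions $\phi\in\mathcal{D}_{per}$, so one inserts $\phi=u$ using the density of $\mathcal{D}_{per}$ in $W_{per,0}^{1,2}(D_T)$; this is legitimate because $|f_\lambda(s)|\le|\lambda|\,|s|^{p}+|s|^{q}\le C(1+|s|)$ and $D_T$ is bounded, so $f_\lambda(u)\in\mathcal{L}^2(-T,T)$ and $\phi\mapsto\int_{D_T}f_\lambda(u)\phi\,\mathrm{d}x\mathrm{d}z$ is continuous on $W_{per,0}^{1,2}(D_T)$. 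Testing with $u$ yields $\int_{D_T}|\nabla u|^2-\lambda\int_{D_T}|u|^{p+1}+\int_{D_T}|u|^{q+1}=0$, i.e.\ $\Phi_{\lambda,T}'(u)=0$ by the first line of \eqref{sis1}. Since $\lambda<\lambda_{1P}^{D_T}$, Corollary \ref{corPG}(i) then forces $P_{\lambda,T}(u)>0$.

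Next I would upgrade $u$ to a classical solution so that Lemma \ref{lem1}, hence Corollary \ref{cor1}, becomes applicable. As $0<q<p\le 1$, the nonlinearity $f_\lambda$ is locally H\"older continuous and of at most linear growth, so in $-\Delta u=f_\lambda(u)$ the right-hand side is as integrable as $u$; $\mathcal{L}^r$-elliptic estimates and iteration give $u\in C^{1,\gamma}(\overline{D_T})$ — with the periodicity in $z$ and the Dirichlet condition on $\partial\Omega$ handled as in the regularity arguments used for Theorem \ref{thm1} — and interior Schauder estimates then give $u\in C^2(D_T)$. Since $\Omega$ is bounded, star-shaped and $C^2$, Corollary \ref{cor1} now yields $P_{\lambda,T}(u)\le 0$, contradicting the previous step; hence no such $u$ exists.

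I expect the only genuinely delicate point to be the regularity step — making sure the bootstrap and the Pohozaev identity of Appendix C close up to $\partial\Omega$ in spite of $f_\lambda$ being non-Lipschitz at $0$ — while the test-function computation and the final comparison are immediate. One may also bypass Corollary \ref{corPG}(i) and argue in the same spirit directly: $\Phi_{\lambda,T}'(u)=0$ means $\mathcal{R}^1(u)=\lambda$, $P_{\lambda,T}(u)\le 0$ means $\mathcal{R}^P(u)\le\lambda$, hence $t_{1P}(u)\le 1$ by Lemma \ref{propEst}(i) and $t_1(u)<t_{1P}(u)$ by Lemma \ref{propEst}(iii), so $\lambda=\mathcal{R}^1(u)\ge\mathcal{R}^1(t_{1P}(u)u)=\lambda_{1P}^{D_T}(u)\ge\lambda_{1P}^{D_T}$, a contradiction.
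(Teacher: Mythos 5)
Your proposal is correct and follows essentially the same route as the paper: assume a weak solution exists for $\lambda<\lambda_{1P}^{D_T}$, note $\Phi_{\lambda,T}'(u)=0$, invoke elliptic regularity to get $u\in C^{1,\gamma}(\overline{D_T})\cap C^{2}(D_T)$, then derive the contradiction between $P_{\lambda,T}(u)>0$ from Corollary \ref{corPG}(i) and $P_{\lambda,T}(u)\le 0$ from Corollary \ref{cor1}. Your added details (testing with $\phi=u$ via density and the sublinear growth of $f_\lambda$, and the bypass of Corollary \ref{corPG}(i), which just unwinds that corollary's own Rayleigh-quotient argument) are sound refinements of steps the paper simply cites.
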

\begin{proof} 
 We argue by contradiction. Suppose that there exists a non zero weak solution $u \in W_{per,0}^{1,2}(D_T) $ of \eqref{1}-\eqref{4} for some $\lambda<\lambda _{1P}^{D_T}$.   Then $\Phi_{\lambda,T}^{\prime }(u)=0$. By the regularity solutions of elliptic problems (see \cite{giltrud}) it follows that
$u \in C^{1,\gamma}(\overline{D_T})\cap C^{2}({D_T})$
for some $\gamma\in (0,1)$.  Hence,  Corollary \ref{corPG} implies that $P_{\lambda,T}(u)>0$ which yields a contradiction on account of Corollary \ref{cor1}.
	%
\QED
\end{proof}

\section{Minimization problem with Pohozaev's function as a constraint}\label{sec:4}

Let $\lambda\geq \lambda _{1P}^{D_T}$ and recall that
$$
M_{\lambda,T}:=\{w \in W_{per,0}^{1,2}(D_T)\setminus 0: ~\Phi_{\lambda,T}'(u)=0, ~ P_{\lambda,T}(u)\leq 0 \}.
$$ Consider the following constrained minimization problem:
\begin{equation} \label{min1} 
	\hat{\Phi}_{\lambda,T}:=\min_{u\in M_{\lambda,T}} \Phi_{\lambda,T}(u).
\end{equation} 
\begin{prop}
	$M_{\lambda,T}\neq \emptyset$ if and only if $\lambda\geq \lambda _{1P}^{D_T}$.
\end{prop}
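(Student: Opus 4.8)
The plan is to establish the equivalence in two directions, using the nonlinear generalized Rayleigh quotient $\lambda_{1P}^{D_T}(\cdot)$ and the fibering maps $t\mapsto \mathcal{R}^P(tu)$, $t\mapsto \mathcal{R}^1(tu)$ analysed in Section~\ref{sec:NGRQ}. For the direction $M_{\lambda,T}\neq\emptyset \Rightarrow \lambda\geq\lambda_{1P}^{D_T}$: take $u\in M_{\lambda,T}$, so $\Phi_{\lambda,T}'(u)=0$ and $P_{\lambda,T}(u)\leq 0$. By \eqref{RPR1} the first identity gives $\mathcal{R}^1(u)=\lambda$ and $P_{\lambda,T}(u)\leq 0$ gives $\mathcal{R}^P(u)\leq\lambda=\mathcal{R}^1(u)$. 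By Lemma~\ref{propEst}(i) the inequality $\mathcal{R}^P(u)\leq\mathcal{R}^1(u)$ forces $1\geq t_{1P}(u)$, and since $t_{1}(u)<t_{1P}(u)$ (Lemma~\ref{propEst}(ii)) and $(\mathcal{R}^1)'(tu)\geq 0$ for $t\geq t_1(u)$ (as $t_1(u)$ is the global minimum of $\mathcal{R}^1(tu)$), we get $\lambda=\mathcal{R}^1(u)=\mathcal{R}^1(1\cdot u)\geq \mathcal{R}^1(t_{1P}(u)u)=\lambda_{1P}^{D_T}(u)\geq\lambda_{1P}^{D_T}$. This is essentially the contrapositive of Corollary~\ref{corPG}(i), so one could alternatively just cite that corollary directly.

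For the converse, $\lambda\geq\lambda_{1P}^{D_T}\Rightarrow M_{\lambda,T}\neq\emptyset$: if $\lambda>\lambda_{1P}^{D_T}$, this is exactly the content of Corollary~\ref{corPG}(ii), which produces $u\in W^{1,2}_{supp}(D_T)\setminus 0$ with $\Phi_{\lambda,T}'(u)=0$ and $P_{\lambda,T}(u)<0$; one checks $u\in W_{per,0}^{1,2}(D_T)$ (the vector $v$ there can be taken periodic, e.g.\ in $\mathcal{D}_{per}$, since $\lambda_{1P}^{D_T}$ is an infimum over $W_{per,0}^{1,2}(D_T)\setminus 0$ and the quotient is continuous), so $u\in M_{\lambda,T}$. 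The remaining case is the boundary value $\lambda=\lambda_{1P}^{D_T}$. Here I would invoke Lemma~\ref{prop:EXlam0}-type reasoning: one needs a minimizer of the infimum problem \eqref{PPoh2}. Arguing as in the proof of Lemma~\ref{prop:EXlam0} — normalize a minimizing sequence using the homogeneity of $\lambda_{1P}^{D_T}(\cdot)$, extract a weakly convergent subsequence with strong convergence in $\mathcal{L}^\gamma$, rule out the zero weak limit by a Sobolev–Poincaré–Hölder blow-up estimate, and use weak lower semicontinuity — yields a minimizer $\hat v\in W_{per,0}^{1,2}(D_T)\setminus 0$ with $\lambda_{1P}^{D_T}(\hat v)=\lambda_{1P}^{D_T}=\lambda$. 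Then, as recorded right after \eqref{LpoT}, $u:=t_{1P}(\hat v)\hat v$ satisfies $\Phi_{\lambda,T}'(u)=0$ and $P_{\lambda,T}(u)=0$, hence $u\in M_{\lambda,T}$.

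The main obstacle is the critical case $\lambda=\lambda_{1P}^{D_T}$: it requires attainment of the infimum defining $\lambda_{1P}^{D_T}$, and the compactness argument there has to handle both the subcritical lower-order term and the fact that the gradient part of $\lambda_{1P}^{D_T}(u)$ mixes $\|\nabla_x u\|_2^2$ with a critical Sobolev exponent $2^*$ — one must verify that the numerator and denominator behave well enough under weak convergence, i.e.\ that the Rayleigh quotient is weakly lower semicontinuous and that a vanishing-mass scenario is excluded by the Hölder interpolation estimate $\|u\|_{p+1}^{p+1}\leq C\|u\|_{q+1}^{\gamma}$ with the same exponent bookkeeping as in Lemma~\ref{prop:EXlam0}. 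If the paper prefers to avoid attainment at this stage, an alternative is to note that $\Phi_{\lambda,T}'$ and $P_{\lambda,T}$ depend continuously on $\lambda$ and pass to the limit $\lambda_n\downarrow\lambda_{1P}^{D_T}$ along elements $u_n\in M_{\lambda_n,T}$, but controlling $\|u_n\|$ from above and below (nondegeneracy) is itself nontrivial and effectively reduces to the same compactness issue; so I would carry out the direct minimization argument.
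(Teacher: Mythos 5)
Your proof is correct, and the two easy regimes ($\lambda<\lambda_{1P}^{D_T}$ via Corollary~\ref{corPG}(i), $\lambda>\lambda_{1P}^{D_T}$ via Corollary~\ref{corPG}(ii)) coincide with the paper's argument. Where you genuinely diverge is the borderline case $\lambda=\lambda_{1P}^{D_T}$: you propose to prove that the infimum \eqref{PPoh2} is \emph{attained} and then take $u=t_{1P}(\hat v)\hat v$, which indeed lands in $M_{\lambda,T}$ with $P_{\lambda,T}(u)=0$. The paper never proves attainment of $\lambda_{1P}^{D_T}$; instead it takes $\lambda_n\downarrow\lambda_{1P}^{D_T}$ together with $u_n$ satisfying $\Phi_{\lambda_n,T}'(u_n)=P_{\lambda_n,T}(u_n)=0$ (e.g.\ $u_n=t_{1P}(v_n)v_n$ for near-minimizers $v_n$), shows boundedness and a nonzero weak limit $\tilde u$ (citing Lemma~9 of \cite{IlEg}), passes to the limit to get only the \emph{inequalities} $\Phi_{\lambda_{1P}^{D_T},T}'(\tilde u)\le 0$, $P_{\lambda_{1P}^{D_T},T}(\tilde u)\le 0$, and then rescales along the fiber (the mechanism of Corollary~\ref{corPG}(ii)/Proposition~\ref{pradd}: increase $t$ until $\Phi'=0$, noting $P$ decreases along the way) to produce an element of $M_{\lambda_{1P}^{D_T},T}$. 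The trade-off: the paper's route avoids the weak lower semicontinuity of $\lambda_{1P}^{D_T}(\cdot)$, which is genuinely more delicate than in Lemma~\ref{prop:EXlam0} because in \eqref{LpoT} the gradient terms appear in \emph{both} numerator and denominator (the latter raised to the power $\tfrac{1-p}{1-q}\in(0,1)$), so lsc is not automatic — it does hold, since the map $(a,b)\mapsto\bigl(\tfrac{2^*-1-q}{2^*}a+\tfrac{1-q}{2}b\bigr)\bigl(\tfrac{2^*-p-1}{2^*}a+\tfrac{1-p}{2}b\bigr)^{-\frac{1-p}{1-q}}$ is nondecreasing in each variable (both sign checks reduce to $p>q$ and $2^*>2$), but you only flag this verification rather than carry it out, and your exclusion of the vanishing limit (Hölder interpolation with exponent comparison $\tfrac{1-p}{1-q}<\tfrac{2^*-1-p}{2^*-1-q}$) should be made explicit as well. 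Conversely, your dismissal of the approximation route as "the same compactness issue" is a bit quick: the paper's version needs only nontriviality of the weak limit plus the rescaling trick, not attainment or lsc of the quotient, which is precisely what makes it lighter. If you keep your route, complete the two flagged verifications; otherwise the paper's approximation-plus-rescaling argument is the shorter path.
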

\begin{proof} Let $\lambda<\lambda _{1P}^{D_T}$ and $u\in W^{1,2}_{per,0}(D_T)\setminus 0$ such that $\Phi_{\lambda,T}'(u)=0$. Then by Corollary \ref{corPG},  
$P_{\lambda,T}(u)>0$.  Hence  $M_{\lambda,T}=\emptyset$ for any $\lambda<\lambda _{1P}^{D_T}$. \par

Let $\lambda>\lambda _{1P}^{D_T}$. By Corollary \ref{corPG} there is $u \in W_{per,0}^{1,2}(D_T) \setminus 0$ such that $\Phi_{\lambda,T}^{\prime }(u)=0$ and 
$P_{\lambda,T}(u)< 0$, and therefore $u \in M_{\lambda,T}$. Now consider the case $\lambda=\lambda _{1P}^{D_T}$. Let $(\lambda_n)_{n\in\mathbb{N}}$ be a sequence such that $\lambda_n\downarrow \lambda _{1P}^{D_T}$ as $n\to \infty$. Thus, there exists $(u_n)_{n\in \mathbb{N}}\subset W_{per,0}^{1,2}(D_T)\setminus 0 $ satisfying $\Phi_{\lambda_n,T}'(u_n)=P_{\lambda_n,T}(u_n)=0$. Arguing as in Lemma 9 in \cite{IlEg}, it is not difficult to show that $(u_n)$ is bounded in $W_{per,0}^{1,2}(D_T)$ and up to a subsequence $u_n\to \tilde u$ weakly in $W_{per,0}^{1,2}(D_T)\setminus 0$. Therefore passing to the limit as $n\to\infty$, we get $\Phi_{\lambda _{1P}^{D_T}, T}'(\tilde u)\leq 0$ and $P_{\lambda _{1P}^{D_T},T}(\tilde u)\leq 0$. We now redo the same argument as for $\lambda>\lambda _{1P}^{D_T}$ to get $M_{\lambda _{1P}^{D_T},T}\neq \emptyset$.\QED
\end{proof}
\noindent From here it follows that
\begin{cor}
$\hat{\Phi}_{\lambda,T}<+\infty$ for any $\lambda\geq  \lambda _{1P}^{D_T}$.
\end{cor}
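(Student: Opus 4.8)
The statement follows immediately from the preceding proposition together with the definition of $\hat{\Phi}_{\lambda,T}$ as an infimum over the Nehari-type set $M_{\lambda,T}$. The plan is as follows. First I would invoke the proposition just proved, which guarantees $M_{\lambda,T}\neq\emptyset$ precisely when $\lambda\geq\lambda_{1P}^{D_T}$. Pick any admissible element $u_0\in M_{\lambda,T}$; by construction $\Phi_{\lambda,T}'(u_0)=0$ and $P_{\lambda,T}(u_0)\leq 0$, so $u_0\in W_{per,0}^{1,2}(D_T)\setminus 0$ is a genuine competitor in the minimization problem \eqref{min1}. Since $\Phi_{\lambda,T}(u_0)$ is a finite real number — the energy functional is finite on every element of $W_{per,0}^{1,2}(D_T)$ thanks to the Sobolev and Poincaré embeddings recorded in Section \ref{sec:Prel} (note $p+1,q+1<2<2^\ast$) — we conclude
$$
\hat{\Phi}_{\lambda,T}=\inf_{u\in M_{\lambda,T}}\Phi_{\lambda,T}(u)\leq \Phi_{\lambda,T}(u_0)<+\infty .
$$

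There is essentially no obstacle here: the content of the corollary is entirely absorbed by the non-emptiness of $M_{\lambda,T}$, which is the hard part and has already been established in the proposition. The only point requiring a word is that $\Phi_{\lambda,T}$ does not take the value $+\infty$ on $W_{per,0}^{1,2}(D_T)$, but this is clear since each of the three integrals defining $\Phi_{\lambda,T}$ is finite: $\int_{D_T}(|\nabla_x u|^2+|u_z|^2)$ is finite by definition of the space, and $\int_{D_T}|u|^{p+1}$, $\int_{D_T}|u|^{q+1}$ are finite because $u\in\mathcal{L}^{p+1}(D_T)\cap\mathcal{L}^{q+1}(D_T)$ by the continuous embedding $W_{per,0}^{1,2}(D_T)\hookrightarrow \mathcal{L}^\gamma(D_T)$ for $1<\gamma<2^\ast$.

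Thus the whole argument is one line, and I would present it exactly as such: choose $u\in M_{\lambda,T}$ (which is possible by the previous proposition since $\lambda\geq\lambda_{1P}^{D_T}$), and then $\hat{\Phi}_{\lambda,T}\leq\Phi_{\lambda,T}(u)<+\infty$. No lower bound, compactness, or lower-semicontinuity input is needed at this stage — those will only be required later when one shows the infimum is attained, which is the subject of the subsequent section.
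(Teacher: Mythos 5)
Your argument is correct and coincides with the paper's (the corollary is stated there as an immediate consequence of the non-emptiness proposition, with no further proof): pick any $u\in M_{\lambda,T}$, note $\Phi_{\lambda,T}(u)$ is finite, and bound the infimum by it. Nothing more is needed.
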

\begin{lem}\label{le1e}
For any $\lambda\geq \lambda _{1P}^{D_T}$,  there exists a minimizer $u_\lambda$ of problem (\ref{min1}), i.e.,
$\Phi_{\lambda,T}(u_\lambda)=\hat{\Phi}_{\lambda,T}$ and $u_\lambda\in
M_{\lambda,T}$.
\end{lem}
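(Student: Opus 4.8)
The plan is the direct method: take a minimizing sequence for \eqref{min1}, pass to a weak limit, and then --- since the Nehari constraint $\Phi_{\lambda,T}'(u)=0$ is in general only lost ``downwards'' in the limit --- restore it by a one-dimensional fibering projection $t\mapsto tu$ steered by the Rayleigh quotients $\mathcal{R}^1$, $\mathcal{R}^P$ and Lemma~\ref{propEst}. First I would record a priori bounds: for $u\in M_{\lambda,T}$ the identity $\Phi_{\lambda,T}'(u)=0$ reads $\int_{D_T}|\nabla u|^{2}\,\mathrm{d}x\mathrm{d}z+\int_{D_T}|u|^{q+1}\,\mathrm{d}x\mathrm{d}z=\lambda\int_{D_T}|u|^{p+1}\,\mathrm{d}x\mathrm{d}z$, and since $p+1<2$ the Sobolev and Poincar\'e inequalities give $\int_{D_T}|u|^{p+1}\,\mathrm{d}x\mathrm{d}z\le C\big(\int_{D_T}|\nabla u|^{2}\,\mathrm{d}x\mathrm{d}z\big)^{(p+1)/2}$, whence $\big(\int_{D_T}|\nabla u|^{2}\,\mathrm{d}x\mathrm{d}z\big)^{(1-p)/2}\le\lambda C$ --- an upper bound on $\|u\|_{W^{1,2}_{supp}(D_T)}$ over $M_{\lambda,T}$. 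Using additionally $P_{\lambda,T}(u)\le0$ one obtains $\int_{D_T}|u|^{q+1}\,\mathrm{d}x\mathrm{d}z\le C'\int_{D_T}|\nabla u|^{2}\,\mathrm{d}x\mathrm{d}z$, and combining this with interpolation and the Nehari identity (cf.\ Lemma~9 in \cite{IlEg}) one also bounds $\|u\|_{W^{1,2}_{supp}(D_T)}$ away from $0$ on $M_{\lambda,T}$. In particular $\hat{\Phi}_{\lambda,T}>-\infty$, and every minimizing sequence $(u_n)\subset M_{\lambda,T}$ of \eqref{min1} is bounded and bounded away from $0$.

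Passing to a subsequence, $u_n\rightharpoonup u_\lambda$ weakly in $W_{per,0}^{1,2}(D_T)$ and $u_n\to u_\lambda$ strongly in $\mathcal{L}^{\gamma}(-T,T)$ for every $1<\gamma<2^{\ast}$ --- in particular in $\mathcal{L}^{p+1}$ and $\mathcal{L}^{q+1}$. Since $\int_{D_T}|u_n|^{p+1}\,\mathrm{d}x\mathrm{d}z=\lambda^{-1}\big(\int_{D_T}|\nabla u_n|^{2}\,\mathrm{d}x\mathrm{d}z+\int_{D_T}|u_n|^{q+1}\,\mathrm{d}x\mathrm{d}z\big)$ is bounded below by a positive constant, we get $u_\lambda\neq0$. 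By weak lower semicontinuity of $u\mapsto\int_{D_T}|\nabla u|^{2}$, $u\mapsto\int_{D_T}|\nabla_x u|^{2}$, $u\mapsto\int_{D_T}|u_z|^{2}$ and the strong convergence of the subcritical terms, $\Phi_{\lambda,T}'(u_\lambda)\le\liminf_n\Phi_{\lambda,T}'(u_n)=0$, $P_{\lambda,T}(u_\lambda)\le\liminf_n P_{\lambda,T}(u_n)\le0$, and $\Phi_{\lambda,T}(u_\lambda)\le\liminf_n\Phi_{\lambda,T}(u_n)=\hat{\Phi}_{\lambda,T}$.

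The main obstacle is that $\Phi_{\lambda,T}'(u_\lambda)\le0$ need not be an equality, so $u_\lambda$ may lie outside $M_{\lambda,T}$; I would repair this by rescaling. By \eqref{RPR1} (more precisely, $\Phi_{\lambda,T}'(u)$ and $P_{\lambda,T}(u)$ are positive multiples of $\mathcal{R}^1(u)-\lambda$ and $\mathcal{R}^P(u)-\lambda$), $\Phi_{\lambda,T}'(u_\lambda)\le0\Leftrightarrow\mathcal{R}^1(u_\lambda)\le\lambda$ and $P_{\lambda,T}(u_\lambda)\le0\Leftrightarrow\mathcal{R}^P(u_\lambda)\le\lambda$. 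The function $t\mapsto\max\{\mathcal{R}^P(tu_\lambda),\mathcal{R}^1(tu_\lambda)\}$ is, by the monotonicities of $t\mapsto\mathcal{R}^P(tu_\lambda)$ and $t\mapsto\mathcal{R}^1(tu_\lambda)$ together with $t_1(u_\lambda)<t_{1P}(u_\lambda)<t_P(u_\lambda)$ (Lemma~\ref{propEst}(ii)), decreasing on $(0,t_{1P}(u_\lambda)]$ and increasing on $[t_{1P}(u_\lambda),\infty)$, with minimum value $\lambda_{1P}^{D_T}(u_\lambda)$; evaluating at $t=1$ yields $\lambda_{1P}^{D_T}(u_\lambda)\le\max\{\mathcal{R}^P(u_\lambda),\mathcal{R}^1(u_\lambda)\}\le\lambda$. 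Since $\lambda_{1P}^{D_T}(u_\lambda)=\mathcal{R}^1(t_{1P}(u_\lambda)u_\lambda)>\mathcal{R}^1(t_1(u_\lambda)u_\lambda)=\min_{t>0}\mathcal{R}^1(tu_\lambda)$ and $t\mapsto\mathcal{R}^1(tu_\lambda)$ is strictly decreasing then strictly increasing, tending to $+\infty$ as $t\to0^{+}$ and as $t\to+\infty$, the equation $\mathcal{R}^1(tu_\lambda)=\lambda$ has exactly two positive roots $t^{-}<t_1(u_\lambda)<t^{+}$; the set $\{t>0:\mathcal{R}^1(tu_\lambda)\le\lambda\}$ is the interval $[t^{-},t^{+}]$ and it contains $t=1$. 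By construction $\Phi_{\lambda,T}'(t^{+}u_\lambda)=0$. Moreover, $t\mapsto\mathcal{R}^1(tu_\lambda)$ is increasing on $[t_1(u_\lambda),\infty)\ni t_{1P}(u_\lambda)$ and $\mathcal{R}^1(t^{+}u_\lambda)=\lambda\ge\lambda_{1P}^{D_T}(u_\lambda)=\mathcal{R}^1(t_{1P}(u_\lambda)u_\lambda)$, so $t^{+}\ge t_{1P}(u_\lambda)$; Lemma~\ref{propEst}(i) then gives $\mathcal{R}^P(t^{+}u_\lambda)\le\mathcal{R}^1(t^{+}u_\lambda)=\lambda$, i.e. $P_{\lambda,T}(t^{+}u_\lambda)\le0$. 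Hence $t^{+}u_\lambda\in M_{\lambda,T}$.

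Finally, $\frac{d}{dt}\Phi_{\lambda,T}(tu_\lambda)$ has the same sign as $\mathcal{R}^1(tu_\lambda)-\lambda$, which is $\le0$ on $[t^{-},t^{+}]$; thus $t\mapsto\Phi_{\lambda,T}(tu_\lambda)$ is non-increasing there, and since $1\in[t^{-},t^{+}]$ with $t^{+}$ the right endpoint, $\Phi_{\lambda,T}(t^{+}u_\lambda)\le\Phi_{\lambda,T}(u_\lambda)\le\hat{\Phi}_{\lambda,T}$. On the other hand $t^{+}u_\lambda\in M_{\lambda,T}$ forces $\Phi_{\lambda,T}(t^{+}u_\lambda)\ge\hat{\Phi}_{\lambda,T}$, so $\Phi_{\lambda,T}(t^{+}u_\lambda)=\hat{\Phi}_{\lambda,T}$; after relabelling $u_\lambda:=t^{+}u_\lambda$ this is the desired minimizer. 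Nothing above used the strict inequality $\lambda>\lambda_{1P}^{D_T}$, and $M_{\lambda_{1P}^{D_T},T}\neq\emptyset$ by the preceding Proposition, so the endpoint $\lambda=\lambda_{1P}^{D_T}$ is covered as well.
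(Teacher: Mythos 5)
Your argument is correct, but it reaches the minimizer by a different route than the paper. The paper also starts from a bounded minimizing sequence and a weak limit, but then proves \emph{strong} convergence by contradiction: if convergence were not strong, weak lower semicontinuity would give the strict inequality $\Phi_{\lambda,T}'(u_\lambda)<0$, and rescaling to $\kappa u_\lambda$ with $\Phi_{\lambda,T}'(\kappa u_\lambda)=0$ (using Proposition \ref{pradd} to keep $P_{\lambda,T}(\kappa u_\lambda)<0$) would produce an element of $M_{\lambda,T}$ with energy strictly below $\hat{\Phi}_{\lambda,T}$; nontriviality of the limit is then read off from the sign of $\hat{\Phi}_{\lambda,T}$ according to whether $\lambda$ lies below, at, or above $\lambda_0^T$, with a separate argument at $\lambda=\lambda_0^T$. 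You instead never prove strong convergence: you project the weak limit back onto $M_{\lambda,T}$ along the fiber $t\mapsto tu_\lambda$, using the monotonicity structure of $\mathcal{R}^1,\mathcal{R}^P$, Lemma \ref{propEst} and the inequality $\lambda\ge\lambda_{1P}^{D_T}(u_\lambda)$ (obtained from the max of the two quotients at $t=1$) to find $t^{+}\ge t_{1P}(u_\lambda)$ with $\Phi_{\lambda,T}'(t^{+}u_\lambda)=0$, $P_{\lambda,T}(t^{+}u_\lambda)\le0$ and non-increasing energy along $[1,t^{+}]$; and you get $u_\lambda\neq0$ from a uniform positive lower bound for $\int_{D_T}|u|^{p+1}$ on the Nehari set via interpolation (in the spirit of Lemma 9 of \cite{IlEg}), which lets you treat all $\lambda\ge\lambda_{1P}^{D_T}$ uniformly and avoid the trichotomy in $\lambda$. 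What each approach buys: the paper's contradiction argument yields strong convergence of the minimizing sequence (useful later, cf.\ Appendix B), while yours is more self-contained at this lemma and handles the endpoint and the value $\lambda_0^T$ without case distinctions. One cosmetic point: your intermediate assertion that $P_{\lambda,T}(u)\le0$ gives $\int_{D_T}|u|^{q+1}\le C'\int_{D_T}|\nabla u|^{2}$ is not immediate as stated (as written it would already presuppose a lower bound on the norm), but it is also not needed — the interpolation argument from the Nehari identity alone gives the lower bound on $\int_{D_T}|u|^{p+1}$ that you actually use.
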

\begin{proof} Let $\lambda\geq\lambda _{1P}^{D_T}$. Then by the above $M_{\lambda,T}\neq \emptyset$ and $\hat{\Phi}_{\lambda,T}<+\infty$.
Observe that $M_{\lambda,T}$
is bounded. Indeed, if $u\in M_{\lambda,T }$, 
then
\begin{align*}
	c_1\Vert u\Vert _{1}^2 	&\leq \frac{1}{2^{\ast }}\int_{D_T }|\nabla_x u|^{2}\,\mathrm{d}x \mathrm{d}z+\frac{1}{{%
	2}}\int_{D_T }|u_z|^{2}\,\mathrm{d}x \mathrm{d}z+\frac{1}{{%
	q +1}}\int_{D_T}|u|^{{q +1}}\,\mathrm{d}x \mathrm{d}z\\
	&\leq \lambda \frac{1}{%
	{p +1}}\int_{D_T }|u|^{{p +1}}\,\mathrm{d}x\mathrm{d}z\leq c_2\lambda \frac{1}{%
	{p +1}}\Vert u\Vert _{1}^{p +1}
\end{align*}
with some constants $c_1,c_2$ which do not depend on $u\in M_{\lambda,T }$.
Hence, since $p +1<2$, we have $\Vert u\Vert _{1}\leq C<+\infty $, $\forall u\in M_{\lambda,T }$,  where $C<+\infty$ does not depend on $u\in M_{\lambda,T }$.

Let $(u_{m})$ be a minimizing sequence of (\ref{min1}):
\begin{equation*}
\Phi_{\lambda,T}(u_m) \to \hat{\Phi}_{\lambda,T}~~\mbox{as}~~m\to
\infty~~\mbox{and}~ u_m \in M_{\lambda,T},~m\in\mathbb{N}.
\end{equation*} 
Since $(u_{m})$ is
bounded, there exists a subsequence, which we denote again $(u_{m})$, such that
 $u_{m}\rightharpoonup u_{\lambda}$ weakly in $W_{per,0}^{1,2}(D_T)$ and strongly $%
u_{m}\rightarrow u_\lambda$ in $\mathcal{L}^\gamma(-T,T)$, $1<\gamma<2^{\ast }$ for some $u_\lambda \in W_{per,0}^{1,2}(D_T)$. We claim that $%
u_{m}\rightarrow u_\lambda$ strongly in $W_{per,0}^{1,2}(D_T)$. If not, $\Vert u_\lambda\Vert
_{1}<\liminf_{m\rightarrow \infty }\Vert u_{m}\Vert _{1}$ and this implies that
\begin{align*}
\Phi_{\lambda,T}^{\prime }(u_\lambda)< 
 \liminf_{m\rightarrow \infty }\Phi_{\lambda,T}^{\prime }(u_{m}) =0.
\end{align*}%
 Hence $\Phi_{\lambda,T}^{\prime }(u_\lambda)< 0$ and $u_\lambda\neq 0$. Then there exists $\kappa >1$ such
that $\Phi_{\lambda,T}^{\prime }(\kappa u_\lambda)=0$ and $\Phi_{\lambda,T}(\kappa
u_\lambda)<\Phi_{\lambda,T}(u_\lambda)<\hat{\Phi}_{\lambda,T}$. By Proposition \ref{pradd}, $%
\Phi_{\lambda,T}^{\prime }(\kappa u_\lambda)=0$ implies $P_{\lambda }^{\prime
}(\kappa u_\lambda)<0$. From this and since
\begin{equation*}
P_{\lambda }(u_\lambda)<\liminf_{m\rightarrow \infty }P_{\lambda }(u_{m})\leq 0,
\end{equation*}%
we conclude that $P_{\lambda }(\kappa u_\lambda)<0$. Thus $\kappa u_\lambda\in
M_{\lambda }$ and $\Phi_{\lambda,T}(\kappa u_\lambda)<\hat{\Phi}_{\lambda,T}$, which is
a contradiction. Hence, $%
u_{m}\rightarrow u_\lambda$ strongly in $W_{per,0}^{1,2}(D_T)$. The property that $\hat{\Phi}_{\lambda,T}>0$ for $\lambda \in (\lambda _{1P}^{D_T}, \lambda _{0}^T )$, and $\hat{\Phi}_{\lambda,T}<0$ for $\lambda \in (\lambda _{0}^T, +\infty)$ yield that $u_\lambda\neq 0$. In the case $\lambda=\lambda _{0}^T$, the proof of $u_\lambda\neq 0$ follows by the same arguments as in the proof of Lemma \ref{prop:EXlam0}. Thus we get that $\Phi_{\lambda,T}(u_\lambda)=\hat{\Phi}_{\lambda,T}$, 
$u_\lambda \in M_{\lambda,T}$.
\end{proof}\QED

\section{Proof of Theorem \ref{thm1} }\label{sec:prTh1}

Let $\lambda\geq\lambda _{1P}^{D_T}$. By Lemma \ref{le1e} there exists a
minimizer $u_\lambda \in W^{1,2}_{per,0}(D_T) \setminus 0$ of (\ref{min1}). This implies that there
exist Lagrange multipliers $\mu_0$, $\mu_1$ $\mu_2$ such that $|\mu_0|+|\mu_1|+|\mu_2|\neq 0$, $\mu_0, \mu_2\geq 0$ and
\begin{eqnarray}\label{eq2}
&&\mu_0D\Phi_{\lambda,T}(u_\lambda)+\mu_1 D\Phi_{\lambda,T}'(u_\lambda)+\mu_2 DP_{\lambda,T}(u_\lambda)=0,\\
&&\mu_2 P_{\lambda,T}(u_\lambda)=0. \label{eq22}
\end{eqnarray}
\begin{prop}\label{Lag}
Assume  $0<q<p<1$ and
$(q ,p)\in \mathcal{E}_{s}(N)$. Let $\lambda>\lambda _{1P}^{D_T}$ and
 $u_\lambda \in W^{1,2}_{per,0}(D_T)$ be a minimizer in (\ref{min1}) such that
$P_{\lambda,T}(u_\lambda)< 0$. Then  $u_\lambda$ is a weak solution of \eqref{1}-\eqref{4}. Furthermore, one may assume that $u_\lambda\geq 0$ in $D_T$. 
\end{prop}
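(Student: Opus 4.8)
The plan is to read off from the Fritz--John system \eqref{eq2}--\eqref{eq22} that the multiplier of the objective does not degenerate. First, since $u_\lambda$ is admissible with $P_{\lambda,T}(u_\lambda)<0$, the complementary slackness relation \eqref{eq22} forces $\mu_2=0$, so \eqref{eq2} collapses to
\[
\mu_0\, D\Phi_{\lambda,T}(u_\lambda)+\mu_1\, D\Phi_{\lambda,T}'(u_\lambda)=0 .
\]
It then remains to show $\mu_1=0$; once this is done, $|\mu_0|+|\mu_1|+|\mu_2|\neq 0$ gives $\mu_0\neq 0$, hence $D\Phi_{\lambda,T}(u_\lambda)=0$, i.e. $u_\lambda$ satisfies \eqref{1Crit} and is a weak solution of \eqref{1}--\eqref{4}.

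To eliminate $\mu_1$, I would test the reduced identity against $u_\lambda$ itself. By definition of the fibering derivative $\langle D\Phi_{\lambda,T}(u_\lambda),u_\lambda\rangle=\Phi_{\lambda,T}'(u_\lambda)$, which vanishes because $u_\lambda\in M_{\lambda,T}$. For the second term, writing $\Phi_{\lambda,T}'(u)=\langle D\Phi_{\lambda,T}(u),u\rangle$ and differentiating (the product rule, or equivalently a direct computation using the first two lines of \eqref{sis1}) gives
\[
\langle D\Phi_{\lambda,T}'(u_\lambda),u_\lambda\rangle=\Phi_{\lambda,T}''(u_\lambda)+\Phi_{\lambda,T}'(u_\lambda)=\Phi_{\lambda,T}''(u_\lambda).
\]
Hence testing with $u_\lambda$ yields $\mu_1\,\Phi_{\lambda,T}''(u_\lambda)=0$.

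Now the crux: the hypotheses $(q,p)\in\mathcal{E}_s(N)$, $\Phi_{\lambda,T}'(u_\lambda)=0$ and $P_{\lambda,T}(u_\lambda)\le 0$ are exactly those of Lemma \ref{pro}, which gives $\Phi_{\lambda,T}''(u_\lambda)>0$. This strict non-degeneracy (and nothing else) is the main obstacle of the proof, and it is precisely the point at which the curve of critical exponents $d^*(p,q,N)>0$ is used; without it $\mu_1$ could be nonzero and $u_\lambda$ would only solve a perturbed equation. Since $\Phi_{\lambda,T}''(u_\lambda)>0$, we conclude $\mu_1=0$, therefore $\mu_0\neq 0$, and dividing \eqref{eq2} by $\mu_0$ gives $D\Phi_{\lambda,T}(u_\lambda)=0$; testing against $\mathcal{D}_{per}$ recovers \eqref{1Crit}.

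For the last assertion, note that $\Phi_{\lambda,T}$, $\Phi_{\lambda,T}'$ and $P_{\lambda,T}$ depend on $u$ only through $|\nabla_x u|^2$, $|u_z|^2$, $|u|^{p+1}$ and $|u|^{q+1}$; since $|u_\lambda|\in W_{per,0}^{1,2}(D_T)$ with $\bigl|\nabla |u_\lambda|\bigr|=|\nabla u_\lambda|$ a.e.\ and the periodicity and boundary conditions are preserved under $u\mapsto|u|$, all three functionals are unchanged. Hence $|u_\lambda|\in M_{\lambda,T}$ is again a minimizer of \eqref{min1} with $P_{\lambda,T}(|u_\lambda|)<0$, and the argument above applies verbatim to $|u_\lambda|$, which is nonnegative; so one may indeed assume $u_\lambda\ge 0$ in $D_T$. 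Everything apart from the non-degeneracy step is bookkeeping with the Lagrange system and the homogeneity identities for $\Phi_{\lambda,T}(tu)$.
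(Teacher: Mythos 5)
Your proposal is correct and follows essentially the same route as the paper: complementary slackness kills $\mu_2$, testing \eqref{eq2} with $u_\lambda$ together with $\Phi_{\lambda,T}'(u_\lambda)=0$ gives $\mu_1\Phi_{\lambda,T}''(u_\lambda)=0$, and Lemma \ref{pro} (the $d^*>0$ non-degeneracy) forces $\mu_1=0$, hence $\mu_0\neq 0$ and $D\Phi_{\lambda,T}(u_\lambda)=0$; the nonnegativity claim via invariance of $\Phi_{\lambda,T}$, $\Phi_{\lambda,T}'$, $P_{\lambda,T}$ under $u\mapsto|u|$ is also the paper's argument. Your identity $\langle D\Phi_{\lambda,T}'(u_\lambda),u_\lambda\rangle=\Phi_{\lambda,T}''(u_\lambda)+\Phi_{\lambda,T}'(u_\lambda)$ is in fact a slightly more careful rendering of the paper's testing step.
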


\begin{proof}
	Since $P_{\lambda,T}(u_\lambda)< 0$, equality \eqref{eq22} implies $\mu_2=0$. Moreover, by Lemma  \ref{pro} we have $\Phi_{\lambda,T}''(u_\lambda)>0$. Testing \eqref{eq2} by $u_\lambda$ we get 
	$0=\mu_0\Phi_{\lambda,T}'(u_\lambda)+\mu_1 \Phi_{\lambda,T}''(u_\lambda)$, and consequently $\mu_1 \Phi_{\lambda,T}''(u_\lambda)=0$. Since $\Phi_{\lambda,T}''(u_\lambda)>0$, we get 
	$\mu_1=0$, consequently $\mu_0\neq 0$ . Thus  $D\Phi_{\lambda,T}(u_\lambda)=0$, i.e., $u_\lambda$ is a weak solution of \eqref{1}-\eqref{4}.    Since
$\Phi_{\lambda,T}(|u|)=\Phi_{\lambda,T}(u)$,
$\Phi_{\lambda,T}'(|u|)=\Phi_{\lambda,T}'(u)$, $P_{\lambda,T}(|u|)=P_{\lambda,T}(u)$ for any $u\in W^{1,2}_{per,0}(D_T)$ we may assume that
$u_{\lambda}\geq 0$. \QED
\end{proof}

\noindent Denote
$$
\mathcal{G}_{\lambda}^T=\{u \in M_{\lambda,T}:~\Phi_{\lambda,T}(u_\lambda)=\hat{\Phi}_{\lambda,T}\}. $$
Select a subset in $\mathcal{G}_{\lambda}^T$ which does not contain "isolated points":
\begin{equation}\label{Mtilda}
	\tilde{\mathcal{G}}_{\lambda}^T=\{u_\lambda \in \mathcal{G}_{\lambda}^T:~\exists u_{\lambda_m} \in \mathcal{G}_{\lambda_m}^T, m\in \mathbb{N}~s.t.~u_{\lambda_m}
	\to u_\lambda~\mbox{ in}~ W_{per,0}^{1,2}(D_T)~\mbox{as}~\lambda_m\downarrow \lambda\}.
\end{equation}
Notice that 
$\tilde{\mathcal{G}}_{\lambda}^T$ is a non-empty set for all $\lambda \in [\lambda _{1P}^{D_T}, +\infty)$. Indeed, by Lemma
\ref{app}, if $\lambda \in [\lambda _{1P}^{D_T}, +\infty)$ and $\lambda_m \downarrow \lambda$, then there exist $u_\lambda \in \mathcal{G}_{\lambda}^T$ and  $u_{\lambda_m} \in \mathcal{G}_{\lambda_m}^T$, $m=1,\ldots $ such that 
 $u_{\lambda_m} \to u_\lambda$ strongly in $W_{per,0}^{1,2}(D_T)$ as $m \to +\infty$.  We introduce
\begin{equation}\label{eqZ}
Z:=\{\lambda \in [\lambda _{1P}^{D_T}, +\infty):~~P_{\lambda,T}(u_\lambda)<0,~~\forall  u_\lambda \in \tilde{\mathcal{G}}_{\lambda}^T\}.
\end{equation}
Then we have:
\begin{prop}\label{Zset}  
 $Z$ is a non-empty open subset of $(\lambda _{1P}^{D_T}, +\infty)$ with $\lambda _{1P}^{D_T} \notin Z$. Furthermore, let $\lambda\in Z$ and a sequence $(\lambda_m) \subset [\lambda _{1P}^{D_T}, +\infty)$ such that $\lambda_m\downarrow\lambda$ as $m\to\infty$. Then $\lambda_m\in Z $ for $m$ large enough.
\end{prop}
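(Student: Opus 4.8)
The plan is to verify the three asserted properties of $Z$ in turn — that $\lambda_{1P}^{D_T}\notin Z$, that $Z\neq\emptyset$, and that $Z$ is open — the last one together with the sharper ``furthermore'' statement, which is openness in the decreasing direction phrased along an arbitrary decreasing sequence.

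\textbf{$\lambda_{1P}^{D_T}\notin Z$.} I would show that every $u\in M_{\lambda_{1P}^{D_T},T}$ satisfies $P_{\lambda_{1P}^{D_T},T}(u)=0$; since $\tilde{\mathcal G}_{\lambda_{1P}^{D_T}}^T$ is a non-empty subset of $M_{\lambda_{1P}^{D_T},T}$, this already prevents the defining strict inequality of $Z$ from holding at $\lambda_{1P}^{D_T}$. For such $u$ one has $\mathcal R^1(u)=\lambda_{1P}^{D_T}$ (from $\Phi'_{\lambda_{1P}^{D_T},T}(u)=0$ and \eqref{RPR1}) and $\mathcal R^P(u)\le\mathcal R^1(u)$ (from $P_{\lambda_{1P}^{D_T},T}(u)\le0$), hence $t_{1P}(u)\le 1$ by Lemma \ref{propEst}(i). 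As $t_1(u)<t_{1P}(u)$ and $\mathcal R^1(tu)$ is strictly increasing for $t>t_1(u)$, one gets $\lambda_{1P}^{D_T}=\mathcal R^1(u)\ge\mathcal R^1(t_{1P}(u)u)=\lambda_{1P}^{D_T}(u)\ge\lambda_{1P}^{D_T}$; equality throughout forces $t_{1P}(u)=1$, so $\mathcal R^P(u)=\mathcal R^1(u)$, i.e. $P_{\lambda_{1P}^{D_T},T}(u)=0$. For non-emptiness I would check that $(\lambda_0^T,+\infty)\subset Z$ (indeed $\lambda_0^T\in Z$ as well): for $\lambda\ge\lambda_0^T$ and $u_\lambda\in\tilde{\mathcal G}_\lambda^T\subset\mathcal G_\lambda^T$, if $P_{\lambda,T}(u_\lambda)=0$ then \eqref{PandE} and the Poincar\'e inequality \eqref{POIAN} give $\Phi_{\lambda,T}(u_\lambda)=\frac{1}{N}\int_{D_T}|\nabla_x u_\lambda|^{2}\,\mathrm{d}x\mathrm{d}z>0$, contradicting $\Phi_{\lambda,T}(u_\lambda)=\hat\Phi_{\lambda,T}\le0$ (Lemma \ref{le1e}, together with $\hat\Phi_{\lambda_0^T,T}=0$, obtained as in Lemma \ref{prop:EXlam0}); hence $P_{\lambda,T}(u_\lambda)<0$ for all $u_\lambda\in\tilde{\mathcal G}_\lambda^T$, so $\lambda\in Z$.

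\textbf{Openness and right-continuity.} Both rest on one compactness argument, run by contradiction. Let $\lambda\in Z$ and $(\mu_m)\subset[\lambda_{1P}^{D_T},+\infty)$ with $\mu_m\to\lambda$; were the conclusion false, after passing to a subsequence $\mu_m\notin Z$ for all $m$, so there is $u_m\in\tilde{\mathcal G}_{\mu_m}^T\subset M_{\mu_m,T}$ with $P_{\mu_m,T}(u_m)=0$ (it cannot be positive because of the constraint in $M_{\mu_m,T}$). Since $\lambda>\lambda_{1P}^{D_T}$ by the first step, we may assume $\mu_m\in(\lambda_{1P}^{D_T},\lambda_1]$ for a fixed $\lambda_1$, and then $(u_m)$ is bounded in $W_{per,0}^{1,2}(D_T)$ by the uniform bound on $\bigcup_{\mu\in(\lambda_{1P}^{D_T},\lambda_1]}M_{\mu,T}$ (the estimate used in the proof of Lemma \ref{le1e}, which only uses $\Phi'_{\mu,T}(u)=0$ and $p+1<2$). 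By the convergence result of Appendix B (Lemma \ref{app}) a subsequence converges strongly, $u_m\to u_*$ in $W_{per,0}^{1,2}(D_T)$, with $u_*\in\mathcal G_\lambda^T$, and passing to the limit $P_{\lambda,T}(u_*)=0$. If $\mu_m\downarrow\lambda$ — the hypothesis of the ``furthermore'' assertion, and also available whenever infinitely many $\mu_m\ge\lambda$ — then, since $u_m\in\mathcal G_{\mu_m}^T$, $\mu_m\downarrow\lambda$ and $u_m\to u_*$, the definition \eqref{Mtilda} of $\tilde{\mathcal G}_\lambda^T$ applies verbatim and gives $u_*\in\tilde{\mathcal G}_\lambda^T$, contradicting $\lambda\in Z$; this proves the ``furthermore'' statement and that $Z$ is open from the right. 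For openness from the left one runs the same argument with $\mu_m\uparrow\lambda$; here \eqref{PandE} and $P_{\mu_m,T}(u_m)=0$ force $\hat\Phi_{\mu_m,T}=\frac{1}{N}\int_{D_T}|\nabla_x u_m|^{2}\,\mathrm{d}x\mathrm{d}z>0$, whence $\lambda\le\lambda_0^T$, and again $u_*\in\mathcal G_\lambda^T$ with $P_{\lambda,T}(u_*)=0$; the remaining task is to certify $u_*\in\tilde{\mathcal G}_\lambda^T$, which is done by combining the quantitative convergence of minimizers from Appendix B with the fact that each $u_m\in\tilde{\mathcal G}_{\mu_m}^T$ is itself a strong limit of minimizers along parameters decreasing to $\mu_m$, and then extracting a suitable diagonal sequence.

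\textbf{Main obstacle.} The genuinely delicate point is this last step: when $\mu_m\uparrow\lambda$ the strong limit $u_*$ need not, a priori, be a ``non-isolated'' minimizer at $\lambda$ (having $P_{\lambda,T}(u_*)=0$, it carries the minimal possible gradient norm among minimizers at $\lambda$, which is perfectly compatible with being isolated from the right), so one cannot directly invoke \eqref{Mtilda}; placing $u_*$ in $\tilde{\mathcal G}_\lambda^T$ requires carefully threading the approximating sequences hidden inside the $u_m$ through the convergence estimates of Appendix B. The remaining ingredients — the Rayleigh-quotient monotonicity of Section \ref{sec:NGRQ}, the Pohozaev identity \eqref{PandE}, the uniform boundedness of $M_{\mu,T}$, and the sign information on $\hat\Phi_{\lambda,T}$ — are routine.
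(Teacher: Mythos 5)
Your treatment of the first three assertions coincides with the paper's proof. For non-emptiness you show $(\lambda_0^T,+\infty)\subset Z$ from $\hat\Phi_{\lambda,T}\le 0$ together with the identity \eqref{PandE} (the paper proves $\hat\Phi_{\lambda,T}<0$ for $\lambda>\lambda_0^T$ inside this very proposition by the $t^*,\tilde t$ scaling argument rather than quoting it, but the content is the same); for $\lambda_{1P}^{D_T}\notin Z$ you use the strict monotonicity of $\mathcal{R}^1(tu)$ beyond $t_1(u)$ to force $t_{1P}(u)=1$ and hence $P_{\lambda_{1P}^{D_T},T}(u)=0$, which is the paper's Rayleigh-quotient argument in contrapositive form; and for the ``furthermore'' part you run exactly the paper's compactness argument via Lemma \ref{app}, with membership of the limit in $\tilde{\mathcal{G}}_\lambda^T$ coming for free from \eqref{Mtilda} because the approximating parameters decrease to $\lambda$. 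These parts are fine.

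The genuine gap is precisely where you place your ``main obstacle'', and the repair you propose does not close it. For $\mu_m\uparrow\lambda$ you obtain a strong limit $u_*\in\mathcal{G}_\lambda^T$ with $P_{\lambda,T}(u_*)=0$, and to contradict $\lambda\in Z$ you must certify $u_*\in\tilde{\mathcal{G}}_\lambda^T$, i.e.\ produce minimizers $w_j\in\mathcal{G}_{\nu_j}^T$ with $\nu_j\downarrow\lambda$ (from above) and $w_j\to u_*$ strongly. The diagonal extraction you sketch cannot deliver this: the approximating minimizers hidden inside each $u_m\in\tilde{\mathcal{G}}_{\mu_m}^T$ live at parameters decreasing to $\mu_m<\lambda$, so any member of those families that is quantitatively close to $u_m$ (and a fortiori to $u_*$) has parameter close to $\mu_m$, hence below $\lambda$ for large $m$; the only members with parameter above $\lambda$ are early terms of those sequences, over which you have no closeness control whatsoever. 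Hence no diagonal choice can satisfy simultaneously $\nu_j>\lambda$ and $w_j\to u_*$, and openness from the left — which is exactly what is needed later to conclude $\lambda^*(T)\notin Z$ — is not established by your argument. For what it is worth, the paper is itself very terse at this point: it treats arbitrary sequences by invoking Remark \ref{remapp}, which only yields strong convergence to some minimizer in $\mathcal{G}_\lambda^T$, and says the proof ``follows exactly as above'', so the membership of the limit in $\tilde{\mathcal{G}}_\lambda^T$ is not spelled out there either. You have correctly isolated the delicate point, but flagging the obstacle is not the same as overcoming it, and the diagonal argument as written would fail.
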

\begin{proof}
 By Lemma \ref{le1e}, 
for any $\lambda>\lambda_0^T> \lambda _{1P}^{D_T}$ there exists 
$u_\lambda \in M_{\lambda,T}$ such that $\Phi_{\lambda,T}(u_\lambda)=\hat{\Phi}_{\lambda,T}$.  Note that  $\hat{\Phi}_{\lambda,T} < 0$.  Indeed, since $\lambda>\lambda_0^T$, there exists $v\in W_{per,0}^{1,2}(D_T) \setminus 0$ such that $\lambda>\lambda_0^T(v)\geq \lambda_0^T$. Moreover, from \eqref{T_1}, we have $\Phi_{\lambda_0^T(v), T}(t_0(v)v)=0=\Phi'_{\lambda_0^T(v), T}(t_0(v)v)$. Therefore, $\Phi'_{\lambda, T}(t_0(v)v)<0$. Then, there exist $t^*$ and $\tilde t$ verifying $0<t^*<t_0(v)<\tilde t$, $\Phi'_{\lambda, T}(t^*v)=\Phi'_{\lambda, T}(\tilde t v)=0$ and such that for any $t\in (t^*,\tilde t)$ one has $\Phi'_{\lambda, T}(tv)<0$. Thus $\Phi_{\lambda, T}(t^*v)\leq \Phi_{\lambda, T}(t_0(v)v)<0$. Consequently, $t^*v\in M_{\lambda, T}$ and $\hat{\Phi}_{\lambda,T}=\Phi_{\lambda,T}(u_\lambda)<0$. Now, in view of the identity \eqref{PandE}, we obtain  that
$P_{\lambda,T}(u_\lambda)<0$, that is  $\lambda \in Z$. Hence $(\lambda_0^T, +\infty)\subset Z$ and $Z\neq \emptyset$.

Suppose, contrary to our claim, that  $\lambda _{1P}^{D_T} \in Z$. Then  $\Phi_{\lambda _{1P}^{D_T},T}'(u)=0$, $P_{\lambda _{1P}^{D_T},T}(u)<0$, for $u \in \tilde{\mathcal{G}}_{\lambda _{1P}^{D_T}}^T$ which means that $ \mathcal{R}^P(u)<\lambda _{1P}^{D_T}=\mathcal{R}^1(u)$. Hence by Lemma \ref{propEst}, $1>t_{1P}(u)$. Since $(\mathcal{R}^1)'(tu)> 0$ for $t>t_{1P}(u)$, $\lambda _{1P}^{D_T}(u)=\mathcal{R}^1(t_{1P}(u)u)< \mathcal{R}^1(u)=\lambda _{1P}^{D_T}$. However, this contradicts the definition  \eqref{PPoh2} of $\lambda _{1P}^{D_T}$.

 Let $\tilde{\lambda} \in Z$. Suppose by contradiction that there exists a sequence $(\lambda_m) \subset (\lambda _{1P}^{D_T}, +\infty)\setminus Z$ such that $\lambda_m \downarrow \tilde{\lambda}$ as $m\to \infty$.
This means that there exists a sequence $u_{\lambda_m} \in  \tilde{\mathcal{G}}_{\lambda_m}^T$ such that $P_{\lambda_m,T}(u_{\lambda_m})=0$. Then by  
  Lemma \ref{app}, there exist $u_{\tilde{\lambda}} \in \tilde{\mathcal{G}}_{\tilde{\lambda}}^T$ and a subsequence $(u_{\lambda_m})$, still denoted by $(u_{\lambda_m})$,  such that
$u_{\lambda_m} \to u_{\tilde{\lambda}}$ strongly in 
 $W_{per,0}^{1,2}(D_T)$ as $m \to +\infty$. Hence $P_{\tilde{\lambda},T}(u_{\tilde{\lambda}})=0$, which  contradicts   $\tilde{\lambda} \in Z$.
 
 Now, we will prove that $Z$ is an open set. On the contrary assume there exists $\lambda\in Z$ and sequence $(\lambda_m)\subset (\lambda _{1P}^{D_T}, +\infty)\setminus Z$ such that $\lambda_m\to \lambda$, as $m\to \infty$. Then, the proof follows exactly as above by using Remark \ref{remapp} instead of Lemma \ref{app}.   \QED
\end{proof}
\noindent We introduce
\begin{equation*}
\lambda^*(T):=\sup \{\lambda \in [\lambda _{1P}^{D_T}, +\infty):~~P_{\lambda,T}(u_\lambda)=0,~~\exists  u_\lambda \in \tilde{\mathcal{G}}_{\lambda}^T\}.
\end{equation*}
From the above we can conclude that $\lambda^*(T) \not\in Z$ and  $(\lambda^*(T), +\infty) \subseteq Z$.
\begin{lem}\label{Pr2} For any $T>0$, 
\begin{enumerate}
	\item there exists a minimizer $u_{\lambda^*(T)}^T$ of (\ref{min1}) with  compact support in $\Omega$, 
		\item $u_{\lambda^*(T)}^T$ is a least energy solution of \eqref{1}-\eqref{4} with
$\lambda=\lambda^*(T)$ and $u_{\lambda^*(T)}^T\geq 0$ in $\Omega$,
\item $\lambda _{1P}^{D_T}\leq\lambda^*(T)<\lambda_0^T$.
\end{enumerate}
\end{lem}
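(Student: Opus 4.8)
The plan is to realize the compactly supported minimizer at $\lambda^*(T)$ as a strong limit of minimizers taken at parameters $\lambda_m\downarrow\lambda^*(T)$ lying in $Z$ — where, by Proposition \ref{Lag}, minimizers are already weak solutions — and then to read off the compact support from the Pohozaev identity of Corollary \ref{cor1}. I would first dispose of the localization $\lambda_{1P}^{D_T}\leq\lambda^*(T)<\lambda_0^T$. The lower bound is immediate: by Proposition \ref{Zset}, $\lambda_{1P}^{D_T}\notin Z$, so some $u\in\tilde{\mathcal{G}}_{\lambda_{1P}^{D_T}}^T$ has $P_{\lambda_{1P}^{D_T},T}(u)\geq0$; since $u\in M_{\lambda_{1P}^{D_T},T}$ also forces $P_{\lambda_{1P}^{D_T},T}(u)\leq0$, equality holds, so $\lambda_{1P}^{D_T}$ lies in the set whose supremum is $\lambda^*(T)$. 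For the upper bound, $(\lambda_0^T,+\infty)\subseteq Z$ and $\lambda^*(T)\notin Z$ give $\lambda^*(T)\leq\lambda_0^T$; equality is excluded because the minimizer $\hat{u}_0^T$ of Lemma \ref{prop:EXlam0} satisfies $\Phi_{\lambda_0^T,T}(\hat{u}_0^T)=0$ and, by \eqref{PandE}, $P_{\lambda_0^T,T}(\hat{u}_0^T)=-\frac{1}{N}\int_{D_T}|\nabla_x\hat{u}_0^T|^2\,\mathrm{d}x\mathrm{d}z<0$, so $\hat{u}_0^T\in M_{\lambda_0^T,T}$ and $\hat{\Phi}_{\lambda_0^T,T}\leq0$, whereas if $\lambda^*(T)=\lambda_0^T$ then $\lambda^*(T)\notin Z$ would supply a minimizer $u$ of \eqref{min1} at $\lambda_0^T$ with $P_{\lambda_0^T,T}(u)=0$, so that $\hat{\Phi}_{\lambda_0^T,T}=\Phi_{\lambda_0^T,T}(u)=\frac{1}{N}\int_{D_T}|\nabla_x u|^2\,\mathrm{d}x\mathrm{d}z>0$ by \eqref{POIAN} — a contradiction.

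To construct the minimizer with compact support, take $\lambda_m\downarrow\lambda^*(T)$ with $\lambda_m\in Z$ (possible since $(\lambda^*(T),+\infty)\subseteq Z$) and pick $u_{\lambda_m}\in\tilde{\mathcal{G}}_{\lambda_m}^T$; then $P_{\lambda_m,T}(u_{\lambda_m})<0$, so Proposition \ref{Lag} makes each $u_{\lambda_m}$ a nonnegative weak solution of \eqref{1}-\eqref{4} with parameter $\lambda_m$. By Lemma \ref{app}, along a subsequence $u_{\lambda_m}\to u_{\lambda^*(T)}^T$ strongly in $W_{per,0}^{1,2}(D_T)$ with $u_{\lambda^*(T)}^T\in\tilde{\mathcal{G}}_{\lambda^*(T)}^T$; thus $u_{\lambda^*(T)}^T$ is a nonnegative minimizer of \eqref{min1} at $\lambda^*(T)$ and, passing to the limit in the weak formulation \eqref{1Crit}, a weak solution of \eqref{1}-\eqref{4} with $\lambda=\lambda^*(T)$, whence $u_{\lambda^*(T)}^T\in C^{1,\gamma}(\overline{D_T})\cap C^2(D_T)$ by elliptic regularity. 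Strong convergence gives $P_{\lambda^*(T),T}(u_{\lambda^*(T)}^T)=\lim_m P_{\lambda_m,T}(u_{\lambda_m})\leq0$; to upgrade this to equality I would argue by contradiction: a strictly negative value, together with the (non-monotone) convergence of Remark \ref{remapp} and the maximality of $\lambda^*(T)$ — as in the openness part of the proof of Proposition \ref{Zset} — would force $\lambda^*(T)\in Z$, which is false. Then $P_{\lambda^*(T),T}(u_{\lambda^*(T)}^T)=0$ and, $\Omega$ being strictly star-shaped, Corollary \ref{cor1} forces $u_{\lambda^*(T)}^T(z,\cdot)$ to have compact support in $\Omega$.

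For the least-energy property, let $w\in W_{per,0}^{1,2}(D_T)\setminus\{0\}$ be any weak solution of \eqref{1}-\eqref{3}; elliptic regularity gives $w\in C^{1,\gamma}(\overline{D_T})\cap C^2(D_T)$, so Corollary \ref{cor1} (with $\Omega$ star-shaped) yields $P_{\lambda^*(T),T}(w)\leq0$, while testing \eqref{1Crit} with $w$ gives $\Phi_{\lambda^*(T),T}'(w)=0$; hence $w\in M_{\lambda^*(T),T}$ and $\Phi_{\lambda^*(T),T}(w)\geq\hat{\Phi}_{\lambda^*(T),T}=\Phi_{\lambda^*(T),T}(u_{\lambda^*(T)}^T)$, so $u_{\lambda^*(T)}^T$ is a least energy solution (and it is nonnegative by construction). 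The step I expect to be the main obstacle is precisely the identification $P_{\lambda^*(T),T}(u_{\lambda^*(T)}^T)=0$ at the degenerate parameter: there $P\leq0$ is allowed, so the Lagrange-multiplier elimination of Proposition \ref{Lag} is not directly available, and one must lean on approximation from $\lambda>\lambda^*(T)$ together with strong convergence and the maximality defining $\lambda^*(T)$; some care is also needed to ensure that the approximating functions delivered by Lemma \ref{app} and Remark \ref{remapp} lie in the classes $\tilde{\mathcal{G}}^T$ rather than merely in $\mathcal{G}^T$.
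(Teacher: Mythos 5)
Your bracketing of $\lambda^*(T)$ and your least-energy argument are sound and essentially parallel to the paper's. The genuine gap is at the central step, the identification $P_{\lambda^*(T),T}(u_{\lambda^*(T)}^T)=0$. You construct the candidate as a Lemma \ref{app} limit of an \emph{arbitrarily chosen} family $u_{\lambda_m}\in\tilde{\mathcal{G}}^T_{\lambda_m}$, obtain $P\leq 0$ in the limit, and then claim that strict inequality ``would force $\lambda^*(T)\in Z$''. That inference does not hold: membership in $Z$ requires $P_{\lambda^*(T),T}(u)<0$ for \emph{every} $u\in\tilde{\mathcal{G}}^T_{\lambda^*(T)}$, whereas your contradiction hypothesis only gives $P<0$ for the one limit you happened to extract. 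Since $\lambda^*(T)\notin Z$ merely guarantees that \emph{some} minimizer in $\tilde{\mathcal{G}}^T_{\lambda^*(T)}$ has $P=0$, minimizers with $P<0$ and with $P=0$ may well coexist at $\lambda^*(T)$, and an arbitrary subsequential limit can land on one with $P<0$; neither Remark \ref{remapp} nor the ``maximality'' of $\lambda^*(T)$ excludes this, because the supremum defining $\lambda^*(T)$ is an existence statement about one minimizer, not a statement about all of them.

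The repair — which is exactly the paper's route — is to reverse your order of selection. You already use, in your upper-bound argument, that $\lambda^*(T)\notin Z$ together with $\tilde{\mathcal{G}}^T_{\lambda^*(T)}\neq\emptyset$ supplies a minimizer $u_{\lambda^*(T)}^T\in\tilde{\mathcal{G}}^T_{\lambda^*(T)}$ of \eqref{min1} with $P_{\lambda^*(T),T}(u_{\lambda^*(T)}^T)=0$ (the one-sided constraint in $M_{\lambda^*(T),T}$ upgrades $P\geq 0$ to $P=0$). Start from \emph{that} element: by the definition \eqref{Mtilda} it comes already equipped with an approximating sequence $u_{\lambda_m}\in\mathcal{G}^T_{\lambda_m}$, $\lambda_m\downarrow\lambda^*(T)$, so $\lambda_m\in Z$; these approximants have $P_{\lambda_m,T}(u_{\lambda_m})<0$, hence are nonnegative weak solutions by Proposition \ref{Lag}, the strong limit is a nonnegative weak solution and a minimizer, elliptic regularity gives $C^{1,\gamma}(\overline{D_T})\cap C^{2}(D_T)$, and Corollary \ref{cor1} with $P=0$ yields the compact support; your $\lambda_{1P}^{D_T}\leq\lambda^*(T)<\lambda_0^T$ and least-energy arguments then go through unchanged (the paper gets $\lambda^*(T)<\lambda_0^T$ the same way, via \eqref{PandE} and \eqref{POIAN}, only phrased through the compactly supported minimizer). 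Your closing caveat about $\mathcal{G}$ versus $\tilde{\mathcal{G}}$ is a fair observation — the definition of $Z$ quantifies over $\tilde{\mathcal{G}}^T_{\lambda_m}$ while \eqref{Mtilda} only provides approximants in $\mathcal{G}^T_{\lambda_m}$ — but this point is passed over in the paper's own proof as well, and your reordering of the argument is what actually needs fixing.
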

\begin{proof} By construction $\lambda^*(T) \in (\lambda _{1P}^{D_T}, +\infty)$, and therefore, $\tilde{\mathcal{G}}_{\lambda^*(T)}^T \neq \emptyset$. Since $\lambda^*(T) \not\in Z$,  
 there exists $u_{\lambda^*(T)}^T \in \tilde{\mathcal{G}}_{\lambda^*(T)}^T$ such that $P_{\lambda^*(T),T}(u_{\lambda^*(T)}^T)= 0$. By \eqref{Mtilda}, there exists a sequence
$\lambda_m \in Z$, $m=1,\ldots $ such that $\lambda_m \downarrow
\lambda^*(T)$ as $m \to \infty$, and a sequence $u_{\lambda_m} \in \mathcal{G}_{\lambda_m}^T$, $m=1,2,\ldots$, such that $u_{\lambda_m} \to u_{\lambda^*(T)}^T$ strongly in $ W_{per,0}^{1,2}(D_T)$ as $m\to +\infty$.  

Since $\lambda_m \in Z$,  $P_{\lambda_m,T}(u_{\lambda_m})<0$, $m=1,2,\ldots$, and thus,  Proposition \ref{Lag} implies that 
$u_{\lambda_m}$, $m=1,2,\ldots$ are weak nonnegative solutions of \eqref{1}-\eqref{4}.   This and the strong convergence  $u_{\lambda_m} \to u_{\lambda^*(T)}^T$ 
 in  $W_{per,0}^{1,2}(D_T)$ yield that $u_{\lambda^*(T)}^T$ is a non-negative weak solution of \eqref{1}-\eqref{4}. 
Then from elliptic regularity theory (see \cite{giltrud})  
 one gets that
$u_{\lambda^*(T)}^T\in C^{1,\gamma}(\overline{D_T})\cap C^{2}({D_T})$
for some $\gamma\in (0,1)$. Hence there holds the equality $P_{\lambda^*(T),T}(u_{\lambda^*(T)}^T)= 0$ which implies from Corollary \ref{cor1} that $u_{\lambda^*(T)}^T$ has a  compact support in $\Omega$, and since  $u_{\lambda^*(T)}^T$ is a  minimizer of (\ref{min1}), $u_{\lambda^*(T)}^T$ is a least energy solution of \eqref{1}-\eqref{4}.

To conclude the proof, it is sufficient to show that $\lambda^*(T)<\lambda_0^T$. From the proof of Proposition \ref{Zset} we have already $\lambda^*(T)\leq \lambda_0^T$. Let us assume that $\lambda^*(T)=\lambda_0^T$. By Lemma \ref{prop:EXlam0}, there exists $\hat{u}_0^T$ such that $D\Phi_{\lambda^*(T),T}(\hat{u}_0^T)=0$, $\Phi_{\lambda^*(T),T}(\hat{u}_0^T)=0$. Then $\hat{u}_0^T \in M_{\lambda^*(T),T}$ and therefore,  $\Phi_{\lambda^*(T),T}(u_{\lambda^*(T)}^T)\leq \Phi_{\lambda^*(T),T}(\hat{u}_0^T)=0$. But  $u_{\lambda^*(T)}^T$ is a solution with compact support, and therefore $P_{\lambda^*(T),T}(u_{\lambda^*(T)}^T)=0$. This by equality \eqref{PandE} implies the opposite inequality $\Phi_{\lambda^*(T),T}(u_{\lambda^*(T)}^T)> 0$.
\QED
\end{proof}

\noindent\textbf{Proof of Theorem \ref{thm1}}: By Lemma \ref{Pr2}, we have $\lambda^*(T)\in \textcolor[rgb]{1,0,0}{(\lambda _{1P}^{D_T},\lambda_0^T)}$. Moreover, from Lemma \ref{le1e}, for all $\lambda\geq\lambda^*(T)$, there exists a minimizer $u_\lambda^T$ for $\Phi_{\lambda,T}$ such that $P_{\lambda,T}(u_\lambda^T)\leq 0$. Since $(\lambda^*(T),+\infty)\subset Z$, for all $\lambda\geq\lambda^*(T)$ with $\lambda\neq\lambda_{0}^T$, we can find a minimizer $u_\lambda^T$ such that $P_{\lambda,T}(u_\lambda^T)<0$. Then, on account of Proposition \ref{Lag}, we deduce that $u_\lambda^T$ is a weak solution of \eqref{1}-\eqref{4}. 
To complete the prove of $(1^o)$ for $\lambda\geq\lambda^*(T)$, we observe that 
 \begin{itemize}
 	\item[(a)] From Proposition \ref{propL0} (i), for $\lambda\in [\lambda^*(T),\lambda_{0}^T)$, we have $\Phi_{{\lambda},T}(u_\lambda^T)>0$.
 	\item[(b)] For $\lambda=\lambda_{0}^T$, on account of Lemma \ref{prop:EXlam0}, we have the existence of a weak solution satisfying $\Phi_{{\lambda},T}(u_\lambda^T)=0$.
 	\item[(c)] 
 	From the proof of Proposition \ref{Zset},  we have $\hat{\Phi}_{\lambda,T}<0$, that is, $\Phi_{\lambda,T}(u_\lambda^T)<0$, for any $\lambda>\lambda_0^T$. 
 \end{itemize} 
 Lemma \ref{pro} implies that $\Phi_{{\lambda},T}''(u_\lambda^T)>0$.  As above, from elliptic regularity theory  we have $u_{\lambda}^T\in C^{1,\gamma}(\overline{D_T})\cap C^{2}({D_T})$
for some $\gamma\in (0,1)$. 
  The proof of $(2^o)$ is clear from parts $1$ and $2$ of Lemma \ref{Pr2}. Subsequently, for $\lambda\in [\lambda_{0}^T,+\infty)$, the least energy solution satisfies $\Phi_{\lambda,T}(u_{\lambda}^T)\leq 0$, consequently, $P_{\lambda,T}(u_\lambda^T)<0$. This proves $(3^o)$. The proof of $(4^o)$ follows from Corollary \ref{nonexist}. \QED

\section{Proof of Theorem \ref{thm2}}\label{sec:prTh2}

\textit{Proof of $(1^o)$.} Notice that $\lambda^T_{1P}\leq \lambda_*(\Omega)$, where $\lambda_*(\Omega)$ is defined in accordance to \eqref{sigma_lambda}. Indeed, for any $\lambda\geq \lambda_*(\Omega)$ problem \eqref{1}-\eqref{4} has a solution $u_\lambda(z,x)\equiv u^N_\lambda(x)$, $z \in [-T,T]$, where $ u^{N}_{\lambda}$ is a least energy solution of ($P_\lambda(\Omega)$). On the other hand, by Corollary \ref{nonexist},  \eqref{1}-\eqref{4} has no solutions if $\lambda\leq  \lambda^T_{1P}$.
Furthermore, by \eqref{lamomeg} we have $\lambda^*(T)<\lambda^T_0\leq \lambda^\Omega_0$, $\forall T>0$.
Here  
$$
\lambda^\Omega_0=c_{0}^{q ,p }\inf_{u \in W^{1,2}_0(\Omega)\setminus 0} \frac{\left(\int_{\Omega}|\nabla u|^{{2}}\mathrm{d}x\right)^{\frac{p-q}{1-q}}\left(\int_{\Omega }|u|^{{q +1}}\mathrm{d}x\right)^{\frac{1-p}{1-q}}}{\int_{\Omega}|u|^{{p +1}}\mathrm{d}x}.
$$
 Let us show that
\begin{equation}\label{barLamb}
 \bar{\lambda}^*:=\sup_{T>0}\{\lambda^*(T)\}<\lambda^\Omega_0.
\end{equation}
Indeed, suppose this is false. Then there exists $T_k \to+\infty$ such that $\lambda^*(T_k)
\to \lambda^\Omega_0$, and consequently  $\lambda^{T_k}_0
\to \lambda^\Omega_0$. This yields
$$
\lambda_0^{D_\infty}=\lambda^\Omega_0.
$$
Similar to the proof of Lemma \ref{prop:EXlam0} it can be shown that there exists a minimizer  $\hat{u}_0 \in W^{1,2}_0(\Omega)$  of $\lambda^\Omega_0(u)$, i.e., $\lambda^\Omega_0=\lambda^\Omega_0(\hat{u}_0 )$. Take $\psi_K \in C^\infty_{sym}(\mathbb{R})$, $K \geq 1$ such that $\psi_K(s)=1$ if $|s|<K-1$, $\psi_K(s)=0$ if $|s|>K-1/2$, and $\max_{s \in \mathbb{R}}(|\psi_K(s)|+|\psi_K'(s)|)<C_0<+\infty$, $\forall K\geq 1$ for some $C_0$ independent of $K$.
Consider $\phi_K(z,x):=\psi_K(z)\hat{u}_0(x)$, $K\in\mathbb{N}$. Evidently, $\phi_K \in W^{1,2}_{sym}(D_\infty)$, $K\in\mathbb{N}$. Note that for $\gamma \in [1,2^*]$, 
$$
\int_{D^\infty}|\phi_K|^{\gamma}\,\mathrm{d}x\mathrm{d}z=2(K-1)\int_\Omega |\hat{u}_0|^\gamma \,\mathrm{d}x+2\int_{K-1}^{K-1/2}|\psi_K(z)|^\gamma \mathrm{d}z\int_\Omega |\hat{u}_0|^\gamma \,\mathrm{d}x,~~K\in\mathbb{N}.
$$ 
Consequently, 
$$
\frac{1}{2(K-1)}\int_{D^\infty}|\phi_K|^{\gamma}\,\mathrm{d}x\mathrm{d}z \to \int_\Omega |\hat{u}_0|^\gamma \,\mathrm{d}x~~\mbox{as}~~K \to +\infty.
$$
Similarly
$$
\frac{1}{2(K-1)}\int_{D^\infty}|\nabla \phi_K|^2\,\mathrm{d}x\mathrm{d}z \to \int_\Omega |\nabla \hat{u}_0|^\gamma \,\mathrm{d}x~~\mbox{as}~~K \to +\infty.
$$
Since $\lambda_0^{D_\infty}=\lambda^\Omega_0$, this implies that $\lambda_0^{D_\infty}(\phi_K) \to \lambda_0^{D_\infty}$ as $K \to +\infty$, that is $(\phi_K)$ is a minimizing sequence of
$\lambda_0^{D_\infty}(u)$. Observe that $t_K:=\|\phi_K\|_1 \to +\infty$ as $K \to +\infty$. 
Consider $v_K:=\phi_K/t_K$, $k=1,2,\ldots$ . Then $\|v_K\|_1 $, $K=1,2,\ldots$  is bounded and due to homogeneity of  $\lambda_0^{D_\infty}(u)$, $\|v_K\|_1 $, $K=1,2,\ldots$ is a minimizing sequence of $\lambda_0^{D_\infty}(u)$. Thus by 
Corollary \ref{cor:EXlam0} there exists  strong in  $W^{1,2}_{sym}(D_\infty)$ non-zero limit point of 	$(v_K)_{K=1}^\infty$. However, it is easy to see $v_K \to 0$ as $K \to +\infty$. We get a contradiction and thus \eqref{barLamb} is true.

Thus, to conclude the proof of $(1^o)$, it is sufficient to show the following.\\
\textit{Claim:}
\textit{For any $\epsilon\in (0, \lambda^\Omega_0-\bar{\lambda}^*)$, there exists $T_\epsilon> 0$ such that for any $T>T_\epsilon$ and $\lambda\in [\lambda^*(T), \lambda^\Omega_0-\epsilon)$ any   least energy solution $u_{\lambda}^T$ of \eqref{1}-\eqref{4}  is periodically nontrivial.} \par
 To prove the claim, suppose on the contrary, that there exist $\epsilon\in (0, \lambda^\Omega_0-\bar{\lambda}^*)$, sequences $(T_m)$, $T_m \to +\infty$ as $m\to +\infty$ and  $\lambda_m \in [ \lambda^*(T_m), \lambda^\Omega_0-\epsilon)$, $m=1,2,\ldots$, such that for any $m=1,2,\ldots$, there exists a 		least energy solution $u_{\lambda_m}^{T_m}$  of \eqref{1}-\eqref{4} which is periodically trivial. Then  obviously, $u_{\lambda_m}^{T_m}(z,\cdot)\equiv u^{N}_{\lambda_m}(\cdot) $,  $\forall z \in [-T_m,T_m]$ and $\lambda^{T_m}_0=\lambda^\Omega_0$, $m=1,2,\ldots$, where $ u^{N}_{\lambda_m}$ is a least energy solution of ($P_{\lambda_m}(\Omega)$), $m=1,2,\ldots$ (see Appendix \ref{A}). Furthermore, $\lambda_m \geq \lambda_*(\Omega)$, $m=1,2,\ldots$,  since by Lemma \ref{lemDHI}, ($P_\lambda(\Omega)$) has no solution for $\lambda< \lambda_*(\Omega)$.  Hence, $0<\lambda_*(\Omega)\leq \lambda_m<  \lambda^\Omega_0-\epsilon<+\infty$, $m=1,2,\ldots$, and therefore there exists a limit point
		$\bar{\lambda} \in [\lambda_*(\Omega),\lambda^\Omega_0-\epsilon]$ such that $\lim_{m\to +\infty}\lambda_m=\bar{\lambda}$.
Since $\bar{\lambda}<\lambda^0_\Omega$, we have $\Phi_{\bar{\lambda}}(u^{N}_{\bar{\lambda}})>\delta_\epsilon>0$ for some $\delta_\epsilon$. Hence $\Phi_{{\lambda_m}}(u^{N}_{\lambda_m})>\delta_\epsilon/2>0$, for sufficiently large $m$. Therefore,
	\begin{equation}\label{inftym}
		\Phi_{\lambda_m,T_m}(u_{\lambda_m}^{T_m})=2T_m \Phi_{{\lambda_m}}(u^{N}_{\lambda_m})\to +\infty~~\mbox{as}~~ m \to +\infty.
	\end{equation}
 Since $\bar{\lambda}\geq  \lambda_*(\Omega)$, Proposition \ref{prop7.2} yields that there exist $T_{\bar{\lambda}}>0$ and finite support function $\phi_{\bar{\lambda}} \in C^\infty_0(D_{T_{\bar{\lambda}}})$ such that $\phi \in M_{\bar{\lambda},T}$ for any $T>T_{\bar{\lambda}}$. This implies that 
	$$
	\hat{\Phi}_{\bar{\lambda},T}\leq \Phi_{\bar{\lambda},T}(\phi_{\bar{\lambda}})<+\infty, ~~\forall T>T_{\bar{\lambda}},
	$$
which contradicts \eqref{inftym}.

\textit{Proof of $(2^o)$.}  Suppose, contrary to our claim,  that there is a sequence $(T_m)$, $T_m \to 0$ as $m\to +\infty$ such that  for every $m=1,2,\ldots$, 
 there exists a least energy solution $u_{\lambda^*(T_m)}$ of \eqref{1}-\eqref{4} which has a compact support in $\mathbb{R}^{N+1}$. By Appendix A, problem $(P(D_{T_m}))$ has no compact support solution for $\lambda<\lambda_{\ast }(D_{T_m})$. Hence 
	$\lambda^*(T_m)\geq \lambda_{\ast }(D_{T_m})$. This by \eqref{InfRad} implies that  $\lambda^*(T_m) \to +\infty$.

	Notice that $W^{1,2}_0(\Omega)$ can be identified with $W_c:=\{u \in 
	W_{\Omega,per}^{1,2}(D_T):~\int_{D_T}  |u_z|^{2}\,\mathrm{d}x\mathrm{d}z =0\}$ and 
	\begin{align*}
		\lambda_{0}^T(u)&=\frac{(2T\int_{\Omega}|u|^{{q +1}}\mathrm{d}x)^{\frac{1-p }{%
		1-q }}(2T\int_{\Omega}|\nabla_x u|^{2}\mathrm{d}x)^{\frac{p -q }{1-q }%
		}}{2T\int_{\Omega }|u|^{{p +1}}\mathrm{d}x}\\
		&=\frac{(\int_{\Omega}|u|^{{q +1}}\mathrm{d}x)^{\frac{1-p }{%
		1-q }}(\int_{\Omega}|\nabla_x u|^{2}\mathrm{d}x)^{\frac{p -q }{1-q }%
		}}{\int_{\Omega }|u|^{{p +1}}\mathrm{d}x}=:\lambda _{0}^\Omega(u),~~\forall u \in W_c, \forall T>0.
	\end{align*}
	Hence  
		\begin{equation}\label{lamomeg}
		\lambda_{0}^T=\inf_{u\in W_{per,0}^{1,2}(D_T)\setminus 0}\lambda_{0}^T(u)\leq \inf_{u\in W^{1,2}_0(\Omega)\setminus 0}\lambda _{0}^\Omega(u)=:\lambda_{0}^{\Omega}, ~~\forall T>0,
	\end{equation}
		and thus by Lemma \ref{Pr2} we have $\lambda_*(D_{T_m})< \lambda_{0}(D_{T_m})\leq \lambda_{0}^{\Omega}<+\infty$ for all $m=1,2,\ldots$, which means a contradiction. \QED

\section{Conclusions and open problems}\label{sec:concl}
 We proved for the equation  with non-Lipschitz non-linearity the existence of least energy solutions  periodic in one variable and subject to the zero Dirichlet conditions on $\partial \Omega$ for other variables. 
Moreover, we  find an upper threshold $\lambda _{1P}^{D_T}$ for the nonexistence of  solutions of the problem. We believe that the point $\lambda^*(T)$ in Theorem \ref{thm1} is a limit value for the existence of nonnegative solutions of the problem and it corresponds to the turning point bifurcation of a branch of the nonnegative solutions.   Note that the main difficulty that  had been overcome in this result is obtaining solutions with positive energy $\Phi_{\lambda,T}(u_{\lambda}^T)>0$ for $\lambda \in [\lambda^*(T), \lambda^T_0)$. Apparently, the least energy solutions for $\lambda \in (\lambda^T_0, +\infty)$ can be obtained without assumption $N(1-q
)(1-p )-2(1+q )(1+p )>0$, by direct application of the Nehari manifold method. Furthermore, we expect that if $\lambda \in [\lambda^T_0, +\infty)$, then it can be obtained (perhaps by the mountain pass theorem) the second branch of nonnegative solutions \eqref{1}-\eqref{4}. However, we do not know whether it is possible to construct the second branch of solutions for $\lambda \in (\lambda^*(T), \lambda^T_0)$ and whether it forms with  the branch of nonnegative solutions $u_{\lambda}^T$   a  turning point bifurcation at the value $\lambda^*(T)$. 
 
Our second result addresses the possibility of the existence of compactly supported solutions of \eqref{1}-\eqref{4} with respect to part of the variables $x \in \mathbb{R}^N$.
Theorems \ref{thm1}, \ref{thm2} confirm a positive answer. However, it does not give a complete answer as to whether they are new type solutions with compact supports for equation \eqref{1}. 
In fact, we conjecture that there exist $T_1^*, T_2^*$, $0<T_1^*<T_2^*< +\infty$ such that 
	\begin{itemize}
	\item for each $T\in (0,T_1^*)$, any  compactly supported  least energy solution $u_{\lambda^*(T)}^T$ of \eqref{1}-\eqref{4} is periodically trivial, i.e. $\int_{D_T }|(u_{\lambda^*(T)})_z|^{2}\mathrm{d}x\mathrm{d}z= 0$. 
		\item for any $T\in (T_1^*,T_2^*)$  the compactly supported  least energy solution $u_{\lambda^*(T)}^T$ of \eqref{1}-\eqref{4} is periodically non-trivial and has no compact support in $D_T$. 
		\item for any $T\geq T_2^*$, any  compactly supported periodic least energy solution $u_{\lambda^*(T)}^T$ of \eqref{1}-\eqref{4} is, in fact, a compactly supported solution of $(1_{\mathbb{R}^{N+1}}^*)$.
	\end{itemize}
	 
An interesting question that also remains open is whether compactly supported solutions \,\,  $u_{\lambda^*(T)}^T(z,x)$ are radially symmetric with respect to variables $x \in \mathbb{R}^N$,
as is the case of problem \eqref{PL} by Lemma \ref{lemDHI} (see also \cite{Kaper2, CortElgFelmer-2, Serrin-Zou}).

	%
	
It is worth noting that the obtained results in this article give reason to expect that problem \eqref{1}-\eqref{4} can have periodically non-trivial nonnegative least energy solutions, which partially satisfying the Hopf maximum principle on the boundary $\partial \Omega$. Similar result was recently obtained in \cite{BDI} for another problem.

\section{Acknowledgments:}
The first author is funded by IFCAM (Indo-French Centre for Applied Mathematics) IRL CNRS 3494. The research of the second
author was supported by the Russian Science Foundation
(RSF) (No. 22-21-00580).

\begin{appendices}

\section{Further investigations of Theorem \ref{thm:KKSZ}}\label{A}

We denote by $R_M$ the radius of the supporting ball $B_{R_M}$ of the unique
(up to translation in $\mathbb{R}^{M}$) compactly supported solution $\psi^M$ of \ref{Eqw}.

Let $M\geq 1$ and $\Theta$ be a bounded  domain in $\mathbb{R}^M$.
The largest ball $%
B_{R^*(\Theta )}^a:=\{x\in \mathbb{R}^{M}:~|x-a|\leq R^*(\Theta )\}$ with some  $a \in \Theta$ contained in $\Theta$ is said to be \textit{inscribed ball} in $\Theta$.  In what follows, we always assume that the centre $a$ of such ball coincides with $0 \in \mathbb{R}^M$ and will use the notation $B_{R^*(\Theta )}:=B_{R^*(\Theta )}^0$. Thus,
$$
R^*(\Theta )=\max\{R: B_R \subseteq \Theta\}. 
$$
Let $R\in (0, R^*(\Theta ))$ and $\sigma_R=R/R_M$. Then the function $u^{M}_{\lambda_R
}(y):=(\sigma_R) ^{-\frac{2}{1-q }}\cdot \psi^M(y/\sigma_R )$  is a compactly
supported  with supp$(u^{M}_{\lambda_R
})=B_R \subset \Theta$ solution  of 
\begin{equation}  \label{PL}
\begin{cases}
-\Delta u=\lambda |u|^{p-1}u- |u|^{q-1}u ~~\mbox{in}~\Theta , \\ 
~~u=0~~\mbox{on}~\partial \Theta ,%
\end{cases}
\tag*{$P_\lambda(\Theta)$}
\end{equation}
 where
 \begin{equation*}
	 \lambda=\lambda_R:= (\sigma_R)^{-\frac{2(p -q )}{1-q }}=\left(\frac{R_M}{R}\right)^{\frac{2(p -q )}{1-q }}.
 \end{equation*}
In what follows, we denote $\sigma(\Theta)=R^*(\Theta )/R_M$,
\begin{equation}\label{sigma_lambda}
	  \lambda_*(\Theta):= (\sigma(\Theta))^{-\frac{2(p -q )}{1-q }}=\left(\frac{R_M}{R^*(\Theta )}\right)^{\frac{2(p -q )}{1-q }} \quad\mbox{and }u^{M}_{\lambda_*(\Theta)
}:=u^{M}_{\lambda_{R^*(\Theta )}}.
 \end{equation}
Note that 
\begin{align}\label{InfRad}
\lambda_R \to +\infty~~\mbox{as}~~R \to 0.
 \end{align}
Moreover, if $\lambda<\lambda_*(\Theta)$, then \eqref{PL} has no compactly supported solutions. 
Define 
\begin{equation*}
\Lambda_{1P}^{\Theta}=\inf_{u\in W_{0}^{1,2}(\Theta)\setminus 0}\Lambda_{1P}^{\Theta}(u),  
\end{equation*}%
where
\begin{equation*}
\Lambda^\Theta_{1P} (u)=c_{1P}^{q ,p,0 }\frac{(\int_{\Theta }|u|^{{q +1}}\mathrm{d}x)^{\frac{1-p }{%
1-q }}(\int_{\Theta }|\nabla u|^{2}\mathrm{d}x)^{\frac{p -q }{1-q }%
}}{\int_{\Theta }|u|^{{p +1}}\mathrm{d}x}
\end{equation*}%
and
\begin{equation*}
c_{1P}^{q ,p,0 }=\frac{(p +1)(2^{\ast }-q +1)}{(p
-q )2^{\ast }}\left( \frac{2^{\ast }(p -q )}{(2^{\ast }-p
-1)(q +1)}\right) ^{\frac{p -q }{1-q }}.  
\end{equation*}%
 Note that $\Lambda^\Theta_{1P} (u)$ is different from $\lambda^\Theta_{1P} (u)$ defined by \eqref{LpoT}.
The following result  follows from \cite{DiazHerIlCOUNT}

\begin{lem} \label{lemDHI} Let $M\geq 3$ and $\Theta $ be a bounded strictly
star-shaped domain in $\mathbb{R}^{M}$ with $C^{2}$-manifold boundary $%
\partial \Theta $. Assume that $(q ,p )\in \mathcal{E}_{s}(M)$.
Then $\Lambda _{1P}^{\Theta}<\lambda_{\ast }(\Theta)$ and 
 \par
$(1^o)$ For any $\lambda \geq
\lambda ^{\ast }(\Theta)$ problem \ref{PL} possess least energy solution $u_{\lambda }$. 

\par
$(2^o)$ The least energy solution $u_{\lambda_*(\Theta)}$ is radially symmetric and compactly supported in $\Theta \subset \mathbb{R}^M$. Moreover,   $u_{\lambda_*(\Theta)}\equiv u^{M}_{\lambda_*(\Theta)}$.

\par
$(3^o)$ For any $\lambda <\lambda ^{\ast }(\Theta)$, problem \ref{PL} has no weak solution.
\end{lem}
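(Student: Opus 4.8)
The plan is to specialize the whole variational machinery of Sections~\ref{sec:Prel}--\ref{sec:prTh1} to the $z$-independent problem \ref{PL}: one replaces $D_T$ by $\Theta\subset\mathbb R^M$, the space $W^{1,2}_{per,0}(D_T)$ by $W^{1,2}_0(\Theta)$, and drops the term $\int|u_z|^2$ from the energy and from the Pohozaev functional, so that the roles of $\lambda^{D_T}_{1P}$ and $\lambda^T_0$ are now played by $\Lambda^{\Theta}_{1P}$ and by the $z$-free quantity $\lambda^{\Theta}_0$, with $2^{\ast}=2M/(M-2)$ and $d^{\ast}=d^{\ast}(p,q,M)>0$ by hypothesis. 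All the structural facts carry over verbatim: Proposition~\ref{pradd}, Lemma~\ref{pro}, the Pohozaev identity and Corollary~\ref{cor1}, the Rayleigh-quotient identities and Lemma~\ref{propEst}, Corollary~\ref{corPG}, existence of the constrained minimizer (Lemma~\ref{le1e}), the Lagrange-multiplier argument (Proposition~\ref{Lag}), the open set $Z$ and the threshold $\lambda^{\ast}(\Theta)$ (Proposition~\ref{Zset}), and Lemma~\ref{Pr2}. This already yields, for every $\lambda\ge\lambda^{\ast}(\Theta)$ with $\lambda^{\ast}(\Theta)\in(\Lambda^{\Theta}_{1P},\lambda^{\Theta}_0)$, a non-negative least energy solution $u_\lambda\in C^{1,\gamma}(\overline\Theta)\cap C^2(\Theta)$, and at $\lambda=\lambda^{\ast}(\Theta)$ a non-negative least energy solution with compact support in $\Theta$. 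Thus $(1^o)$ reduces to the identification $\lambda^{\ast}(\Theta)=\lambda_{\ast}(\Theta)$.

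For this identification and for $(2^o)$, I would argue as follows. At $\lambda=\lambda^{\ast}(\Theta)$ the least energy solution $u_{\lambda^{\ast}(\Theta)}$ is non-negative, of class $C^{1,\gamma}(\overline\Theta)\cap C^2(\Theta)$, and has $P_{\lambda^{\ast}(\Theta),\Theta}(u_{\lambda^{\ast}(\Theta)})=0$; by the $\Theta$-version of Corollary~\ref{cor1} it has compact support strictly inside $\Theta$. Restricting to each connected component of the support and extending by zero gives, after the scaling in the Remark following Theorem~\ref{thm:KKSZ}, a non-negative $C^1$ distributional solution of \ref{Eqw} with connected support, so by Theorem~\ref{thm:KKSZ} each component is a ball on which $u_{\lambda^{\ast}(\Theta)}$ is a translate of the unique radial solution; since the support radius $R=\lambda^{-(1-q)/(2(p-q))}R_M$ depends only on $\lambda$, all components have the same radius $R$ with $\lambda^{\ast}(\Theta)=\lambda_R=(R_M/R)^{2(p-q)/(1-q)}$. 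Each such ball lies in $\Theta$, hence $R\le R^{\ast}(\Theta)$ and $\lambda^{\ast}(\Theta)=\lambda_R\ge\lambda_{\ast}(\Theta)$. Conversely, for any $R'\in(0,R^{\ast}(\Theta)]$ the translate of $u^M_{\lambda_{R'}}$ supported in an inscribed ball of radius $R'$ satisfies $\Phi'_{\lambda_{R'},\Theta}(\cdot)=0$ and $P_{\lambda_{R'},\Theta}(\cdot)=0$, hence lies in the constrained Nehari set $M^{\Theta}_{\lambda_{R'}}$ with active constraint, and arguing as for $\lambda^{\ast}(T)$ one gets $\lambda^{\ast}(\Theta)\le\lambda_{R'}$; taking $R'=R^{\ast}(\Theta)$ and using that $R\mapsto\lambda_R$ is strictly decreasing forces $R=R^{\ast}(\Theta)$, $\lambda^{\ast}(\Theta)=\lambda_{\ast}(\Theta)$, and (minimality of the energy ruling out several components) $u_{\lambda^{\ast}(\Theta)}\equiv u^M_{\lambda_{\ast}(\Theta)}$, which is radial and compactly supported. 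In particular $\Lambda^{\Theta}_{1P}<\lambda^{\ast}(\Theta)=\lambda_{\ast}(\Theta)$, and $(2^o)$ follows.

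Finally, for $(3^o)$ let $\lambda<\lambda_{\ast}(\Theta)$ and suppose $u$ is a non-zero weak solution of \ref{PL}; by elliptic regularity $u\in C^{1,\gamma}(\overline\Theta)\cap C^2(\Theta)$, and replacing $u$ by $|u|$ we may take $u\ge0$. The Pohozaev identity on the strictly star-shaped $\Theta$ gives $P_{\lambda,\Theta}(u)\le0$. If $P_{\lambda,\Theta}(u)=0$, then $u$ is compactly supported, its support is a union of balls of a common radius $R$ with $\lambda=\lambda_R$, each contained in $\Theta$, so $\lambda=\lambda_R\ge\lambda_{\ast}(\Theta)$, a contradiction. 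If $P_{\lambda,\Theta}(u)<0$, then $u$ is not compactly supported and $u\in M^{\Theta}_\lambda$ with inactive constraint; for $\lambda<\Lambda^{\Theta}_{1P}$ this is excluded exactly as in Corollary~\ref{nonexist} (Corollary~\ref{corPG}(i) forces $P_{\lambda,\Theta}(u)>0$, contradicting $P_{\lambda,\Theta}(u)\le0$), while for $\lambda$ in the remaining window $[\Lambda^{\Theta}_{1P},\lambda_{\ast}(\Theta))$ one must rule out \emph{every} weak solution with strictly negative Pohozaev functional by a finer analysis tied to the variational characterization of $\lambda^{\ast}(\Theta)=\lambda_{\ast}(\Theta)$ as the infimum of admissible parameters; this last point is carried out in \cite{DiazHerIlCOUNT}. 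I expect precisely this gap-closing on $[\Lambda^{\Theta}_{1P},\lambda_{\ast}(\Theta))$ to be the main obstacle, since it requires the rigidity of Theorem~\ref{thm:KKSZ} together with the geometric fact that no ball realizing a value $\lambda<\lambda_{\ast}(\Theta)$ can fit inside a non-ball star-shaped domain, and cannot be obtained from the variational obstruction $\Lambda^{\Theta}_{1P}$ alone.
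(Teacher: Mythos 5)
First, note what you are being compared against: the paper gives no proof of Lemma \ref{lemDHI} at all --- it states that the result ``follows from \cite{DiazHerIlCOUNT}'' and imports it wholesale for the stationary problem \ref{PL}. So the only way your proposal could stand is as a self-contained reconstruction, and it is not one: you yourself flag that the nonexistence statement $(3^o)$ on the window $\lambda\in[\Lambda_{1P}^{\Theta},\lambda_{\ast}(\Theta))$ is not delivered by your argument and must be ``carried out in \cite{DiazHerIlCOUNT}''. That is precisely the hard content of the lemma. The variational obstruction of Corollary \ref{corPG} only excludes solutions for $\lambda<\Lambda_{1P}^{\Theta}$; for $\lambda\in[\Lambda_{1P}^{\Theta},\lambda_{\ast}(\Theta))$ a hypothetical solution with $P_{\lambda}(u)<0$ (i.e.\ with nonvanishing normal derivative somewhere on $\partial\Theta$) is not touched by the Pohozaev/Nehari machinery of Sections \ref{sec:Prel}--\ref{sec:prTh1}, nor by the rigidity of Theorem \ref{thm:KKSZ}, which only applies to compactly supported solutions. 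Closing this requires the finer comparison/flat-solution analysis of \cite{DiazHerIlCOUNT,DIH1}, which you do not reproduce; deferring it to the very reference the paper cites means the proposal has a genuine gap rather than an alternative proof.

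A second, unacknowledged soft spot is the identification $\lambda^{\ast}(\Theta)=\lambda_{\ast}(\Theta)$ on which your $(1^o)$ and $(2^o)$ rest. The inequality $\lambda^{\ast}(\Theta)\ge\lambda_{\ast}(\Theta)$ does follow from Theorem \ref{thm:KKSZ} applied to the compactly supported minimizer (each support component is a ball of radius $R=\sigma_{\lambda}R_M\le R^{\ast}(\Theta)$). But for the converse you only say that the translated profile $u^{M}_{\lambda_{R'}}$ lies in the constrained set with active constraint and that ``arguing as for $\lambda^{\ast}(T)$'' gives $\lambda^{\ast}(\Theta)\le\lambda_{R'}$; this does not follow, because $\lambda^{\ast}$ is defined through the behaviour of \emph{minimizers} (the sets $\tilde{\mathcal G}_{\lambda}$ in \eqref{Mtilda}), and the mere existence of some critical point with $P=0$ at the level $\lambda_{R'}$ says nothing about whether the least energy minimizer at nearby $\lambda$ has $P<0$ or $P=0$. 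Without that, existence of least energy solutions on $[\lambda_{\ast}(\Theta),\lambda^{\ast}(\Theta))$, the equality in $(2^o)$, and the strict inequality $\Lambda_{1P}^{\Theta}<\lambda_{\ast}(\Theta)$ are not established. In short: the skeleton (specializing the $D_T$-machinery to $\Theta$ plus Theorem \ref{thm:KKSZ}) is reasonable, but both the threshold identification and, above all, $(3^o)$ are left to the external reference, so the proposal does not constitute a proof of the lemma.
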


Let us show
\begin{prop}\label{prop7.2} For any $\lambda\geq \lambda_*(\Omega)$, there exist  $T_{\lambda}>0$ and $\phi \in W^{1,2}_{sym}(D_\infty)$ such that $\pi^T(\phi) \in M_{\lambda,T}$ for any $T>T_{\lambda}$.
\end{prop}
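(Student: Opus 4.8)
\smallskip
\noindent\textbf{Proof plan.}
The plan is to take $\phi$ to be a plateau cut-off in the variable $z$ multiplied by a suitable profile on $\Omega$, and then to rescale it onto the Nehari manifold, exactly as in the proof of Corollary \ref{corPG}(ii). I would begin with a reduction: it suffices to produce a single \emph{$z$-compactly supported} function $\phi\in W^{1,2}_{sym}(D_\infty)\setminus\{0\}$, with $\mathrm{supp}(\phi)\subseteq[-T_0,T_0]\times\overline{\Omega}$ say, such that $\Phi_{\lambda}'(\phi)=0$ and $P_{\lambda}(\phi)\le 0$ (the functionals being those associated with $D_\infty$). Indeed, for such a $\phi$ and for every $T>T_0$ the restriction $\pi^T(\phi)$ belongs to $W^{1,2}_{per,0}(D_T)$ and the four integrals $\int_{D_T}|\nabla\pi^T\phi|^2$, $\int_{D_T}|(\pi^T\phi)_z|^2$, $\int_{D_T}|\pi^T\phi|^{p+1}$, $\int_{D_T}|\pi^T\phi|^{q+1}$ equal the corresponding integrals over $D_\infty$; hence $\Phi_{\lambda,T}'(\pi^T\phi)=0$ and $P_{\lambda,T}(\pi^T\phi)\le 0$, i.e.\ $\pi^T(\phi)\in M_{\lambda,T}$, for all $T>T_0$, and one sets $T_\lambda:=T_0$.

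To build $\phi$ I would first work on the cross-section. As $\lambda\ge\lambda_*(\Omega)$, Lemma \ref{lemDHI} applied with $\Theta=\Omega$ gives the strict inequality $\Lambda_{1P}^{\Omega}<\lambda_*(\Omega)\le\lambda$. Identifying $W^{1,2}_0(\Omega)$ with the $z$-independent elements of $W^{1,2}_{per,0}(D_T)$, this yields $w_0\in W^{1,2}_0(\Omega)\setminus\{0\}$ with $\lambda_{1P}^{D_T}(w_0)<\lambda$; for $z$-independent $w_0$ this number is independent of $T$ because the length factor $2T$ then cancels throughout \eqref{LpoT}. Now fix a symmetric cut-off $\psi_K\in C^\infty(\mathbb{R})$ with $\psi_K\equiv1$ on $[-K,K]$, $\psi_K\equiv0$ off $(-K-1,K+1)$ and $\sup_{\mathbb{R}}(|\psi_K|+|\psi_K'|)\le C_0$ uniformly in $K\ge1$, and put $\phi_K:=\psi_K\,w_0\in W^{1,2}_{sym}(D_\infty)$, which is $z$-supported in $[-K-1,K+1]$. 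As $K\to\infty$, the quantities $\int_{D_\infty}|\nabla_x\phi_K|^2$, $\int_{D_\infty}|\phi_K|^{p+1}$, $\int_{D_\infty}|\phi_K|^{q+1}$ each grow like $2K$ times the corresponding $\Omega$-integral of $w_0$, whereas $\int_{D_\infty}|(\phi_K)_z|^2$ stays bounded; substituting into \eqref{LpoT} this gives $\lambda_{1P}^{D_\infty}(\phi_K)\to\lambda_{1P}^{D_T}(w_0)<\lambda$ (the same $z$-independent value). So one may fix $K$ so large that $\lambda_{1P}^{D_\infty}(\phi_K)<\lambda$.

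It remains to rescale $\phi_K$, which I would do precisely as in the proof of Corollary \ref{corPG}(ii). The map $t\mapsto\mathcal{R}^1(t\phi_K)$ attains its global minimum at $t_1(\phi_K)$, is strictly increasing on $(t_1(\phi_K),+\infty)$ and tends to $+\infty$ there; by Lemma \ref{propEst}(ii), $t_1(\phi_K)<t_{1P}(\phi_K)$, and by construction $\mathcal{R}^1(t_{1P}(\phi_K)\phi_K)=\lambda_{1P}^{D_\infty}(\phi_K)<\lambda$, so there is $t_K>t_{1P}(\phi_K)$ with $\mathcal{R}^1(t_K\phi_K)=\lambda$. Put $\phi:=t_K\phi_K$. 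By \eqref{RPR1} the identity $\mathcal{R}^1(\phi)=\lambda$ forces $\Phi_{\lambda}'(\phi)=0$, while $t_K>t_{1P}(\phi_K)$ combined with Lemma \ref{propEst}(i) gives $\mathcal{R}^P(\phi)<\mathcal{R}^1(\phi)=\lambda$, i.e.\ $P_{\lambda}(\phi)<0$. As $\phi\in W^{1,2}_{sym}(D_\infty)\setminus\{0\}$ is $z$-supported in $[-K-1,K+1]$, the reduction of the first paragraph finishes the proof with $T_\lambda:=K+1$.

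The step I expect to require the most care is the second one — ensuring that attaching the $z$-variable does not push $\lambda_{1P}^{D_\infty}(\phi_K)$ back up to $\lambda$. This is why one should extend a profile $w_0$ lying \emph{strictly} below $\Lambda_{1P}^{\Omega}$, which is available only because of the strict inequality $\Lambda_{1P}^{\Omega}<\lambda_*(\Omega)$ in Lemma \ref{lemDHI}, and why the cut-off should have a plateau: then the only genuinely new term, $\int|(\phi_K)_z|^2$, is of lower order in $K$ and the limit $\lambda_{1P}^{D_\infty}(\phi_K)\to\lambda_{1P}^{D_T}(w_0)$ is not disturbed. Extending instead the compactly supported solution $u^N_\lambda$ of $(P_\lambda(\Omega))$ — for which $\lambda_{1P}^{D_T}(u^N_\lambda)=\lambda$ exactly — would leave a borderline first-order-in-$1/K$ estimate whose sign would depend on the profile of the cut-off.
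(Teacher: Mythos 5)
Your argument is correct and follows essentially the same route as the paper: the strict inequality $\Lambda_{1P}^{\Omega}<\lambda_*(\Omega)\le\lambda$ from Lemma \ref{lemDHI}, a transfer to the cylinder via (almost) $z$-independent test functions, the ray rescaling of Corollary \ref{corPG}(ii) and Proposition \ref{pradd} to reach $\Phi_{\lambda}'=0$ and $P_{\lambda}<0$, and the observation that compact support in $z$ makes the $D_\infty$ and $D_T$ functionals coincide for all large $T$. Your explicit plateau cut-off computation merely spells out the step the paper handles by identifying $z$-independent profiles (so that $\lambda_{1P}^{D_\infty}(\cdot)$ reduces to $\Lambda_{1P}^{\Omega}(\cdot)$) and then selecting a near-minimizer $\phi\in C^\infty_{0,sym}(\mathbb{R}\times\Omega)$ with $\lambda_{1P}^{\infty}(\phi)<\lambda$.
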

\begin{proof} 
Observe that $C^\infty_0(\Omega)$ can be identified with 
$$
C^\infty_{sym,i}(\mathbb{R} \times \Omega):=\bigg\{u \in C^\infty_{0, sym}(\mathbb{R} \times \Omega) : \int_{D_\infty }|u_z|^{2}\mathrm{d}x\mathrm{d}z=0\bigg\},
$$
and $\lambda_{1P}^{\infty}(u)=\Lambda _{1P}^{\Omega}(I_{\Omega}u)$ for $u \in C^\infty_{sym,i}(\mathbb{R} \times \Omega)$, where $I_{\Omega}u(\cdot,x)=u(0,x)$, $x \in \Omega$.
Consequently,
$$
\lambda_{1P}^{\infty}:=\min\{\lambda_{1P}^{\infty}(u): u \in  W^{1,2}_{sym}(D_\infty)\}\leq \min\{\lambda_{1P}^{\infty}(u): u \in  C^\infty_{sym,i}(\mathbb{R})\}=\Lambda _{1P}^{\Omega}.
$$
By Lemma  \ref{lemDHI}, one has	$\Lambda _{1P}^{\Omega}<	\lambda_*(\Omega)$ and therefore $\lambda_{1P}^{\infty}<	\lambda_*(\Omega)\leq \lambda$. 

Since $\lambda_{1P}^{\infty}<	\lambda$, 
	there is $\phi \in C^\infty_{0, sym}(\mathbb{R} \times \Omega) \setminus 0$ such that $\lambda_{1P}^{\infty}< \lambda_{1P}^{\infty}(\phi)<\lambda $. Since $P_{\lambda_{1P}^{\infty}(\phi)}(\phi)=0$, $\Phi_{\lambda_{1P}^{\infty}(\phi)}'(\phi)=0$, we have $P_{\lambda,{\infty}}(\phi)< 0$, $\Phi_{\lambda,{\infty}}'(\phi)< 0$, and therefore, there exists $t_{\lambda}(\phi)>1$ such that $\Phi_{{\lambda},{\infty}}'(t_{\lambda}(\phi)\phi)=0$. By Proposition \ref{pradd}, we have
$dP_{{\lambda},{\infty}}(t\phi)/dt<0$ for $t\in [1, t_{\lambda}(\phi)]$. Thus $P_{{\lambda},{\infty}}(t_{\lambda}(\phi)\phi)<0$ and $\Phi_{{\lambda},{\infty}}'(t_{\lambda}(\phi)\phi)=0$.	
Since $\phi$ has a compact support in $D_\infty:=\mathbb{R} \times \Omega$, there exists $T_{\lambda}>0$ such that $\forall\, T>T_{\lambda}$,
\begin{align*}
		& P_{{\lambda},T}(t_{\lambda}(\phi)\phi)=P_{{\lambda},{\infty}}(t_{\lambda}(\phi)\phi)<0, \\
		&		\Phi_{{\lambda},T}'(t_{\lambda}(\phi)\phi)=\Phi_{{\lambda},{\infty}}'(t_{\lambda}(\phi)\phi)=0,~~
\end{align*}
 which means that $\pi^T(t_{\lambda}(\phi)\phi )\in M_{\lambda,T}$ for any $T>T_{\lambda}$.\QED
\end{proof}


%

%
\section{A technical result about minimizers}
\begin{lem}
\label{app} Assume $\lambda \in [\lambda _{1P}^{D_T}, +\infty)$ and $u_{\lambda_m}$
is a sequence of minimizers of (\ref{min1}), where $\lambda_m \downarrow \lambda$ as
$m\to +\infty$. Then there exist a minimizer $u_\lambda$ of (\ref{min1}) and
a subsequence, still denoted by $(u_{\lambda_m})$, such that $u_{\lambda_m}
\to u_\lambda$ strongly in $ W_{per,0}^{1,2}(D_T)$ as $m \to +\infty$.
\end{lem}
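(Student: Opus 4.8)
The plan is the standard direct-method scheme — extract a convergent subsequence, upgrade weak to strong convergence, and exclude that the limit is $0$ — but with a non-Lipschitz twist at the last step. First I would observe that, because $\lambda_m\downarrow\lambda$, the numbers $\lambda_m$ are bounded, so the a priori estimate for elements of $M_{\lambda,T}$ established in the proof of Lemma \ref{le1e} (which rests on $\Phi_{\lambda_m,T}'(u_{\lambda_m})=0$, $P_{\lambda_m,T}(u_{\lambda_m})\le 0$, $p+1<2$ and Sobolev--Poincar\'e) is uniform in $m$: $\|u_{\lambda_m}\|_1\le C$. Thus, along a subsequence, $u_{\lambda_m}\rightharpoonup u_\lambda$ weakly in $W^{1,2}_{per,0}(D_T)$ and $u_{\lambda_m}\to u_\lambda$ strongly in $\mathcal{L}^\gamma(-T,T)$ for $1<\gamma<2^\ast$; passing to the limit in $\Phi_{\lambda_m,T}'(u_{\lambda_m})=0$ and $P_{\lambda_m,T}(u_{\lambda_m})\le 0$, using weak lower semicontinuity of $u\mapsto\int_{D_T}|\nabla u|^2$ and strong convergence of the subcritical terms, yields $\Phi_{\lambda,T}'(u_\lambda)\le 0$ and $P_{\lambda,T}(u_\lambda)\le 0$. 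At the same time I would record the energy bound $\limsup_m\hat{\Phi}_{\lambda_m,T}\le\hat{\Phi}_{\lambda,T}$: if $w$ is a minimizer of \eqref{min1} at level $\lambda$ (Lemma \ref{le1e}), then $\Phi_{\lambda_m,T}'(w)=\Phi_{\lambda,T}'(w)-(\lambda_m-\lambda)\int_{D_T}|w|^{p+1}<0$ and $P_{\lambda_m,T}(w)<P_{\lambda,T}(w)\le 0$, so the coercive fibering map $t\mapsto\Phi_{\lambda_m,T}(tw)$ is decreasing at $t=1$ and attains a local minimum at a first critical point $\kappa_m>1$, with $\Phi_{\lambda_m,T}(\kappa_m w)\le\Phi_{\lambda_m,T}(w)$; by Proposition \ref{pradd}, $t\mapsto P_{\lambda_m,T}(tw)$ is strictly decreasing on $[1,\kappa_m]$, hence $P_{\lambda_m,T}(\kappa_m w)<P_{\lambda_m,T}(w)<0$, so $\kappa_m w\in M_{\lambda_m,T}$ and $\hat{\Phi}_{\lambda_m,T}\le\Phi_{\lambda_m,T}(\kappa_m w)\le\Phi_{\lambda_m,T}(w)\to\Phi_{\lambda,T}(w)=\hat{\Phi}_{\lambda,T}$.

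Next I would prove strong convergence in $W^{1,2}_{per,0}(D_T)$. If it fails, then $\|u_\lambda\|_1<\liminf_m\|u_{\lambda_m}\|_1$, and passing to the limit in $\Phi_{\lambda_m,T}'(u_{\lambda_m})=0$ (the subcritical terms converging strongly) forces $\Phi_{\lambda,T}'(u_\lambda)<0$; in particular $u_\lambda\neq0$. Since $t\mapsto\Phi_{\lambda,T}(tu_\lambda)$ is coercive and decreasing at $t=1$, there is a first $\kappa>1$ with $\Phi_{\lambda,T}'(\kappa u_\lambda)=0$ and $\Phi_{\lambda,T}(\kappa u_\lambda)<\Phi_{\lambda,T}(u_\lambda)$; Proposition \ref{pradd} gives $P_{\lambda,T}(\kappa u_\lambda)<P_{\lambda,T}(u_\lambda)\le 0$, so $\kappa u_\lambda\in M_{\lambda,T}$. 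Then, combining weak lower semicontinuity with the bound of the previous paragraph, $\Phi_{\lambda,T}(\kappa u_\lambda)<\Phi_{\lambda,T}(u_\lambda)\le\liminf_m\Phi_{\lambda_m,T}(u_{\lambda_m})=\liminf_m\hat{\Phi}_{\lambda_m,T}\le\hat{\Phi}_{\lambda,T}$, contradicting $\kappa u_\lambda\in M_{\lambda,T}$. Hence $u_{\lambda_m}\to u_\lambda$ strongly; consequently $\Phi_{\lambda,T}'(u_\lambda)=0$, $P_{\lambda,T}(u_\lambda)\le 0$, and $\Phi_{\lambda,T}(u_\lambda)=\lim_m\hat{\Phi}_{\lambda_m,T}\le\hat{\Phi}_{\lambda,T}$.

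The remaining point, $u_\lambda\neq0$, is the main obstacle: in this concave regime $M_{\lambda_m,T}$ admits no lower bound on $\|\cdot\|_1$ uniform in $m$, so no direct norm argument is available. Suppose $u_\lambda=0$; then $t_m:=\|u_{\lambda_m}\|_1\to0$, and I would pass to the rescalings $v_m:=u_{\lambda_m}/t_m$, so $\|v_m\|_1=1$ and, along a subsequence, $v_m\rightharpoonup v_0$ weakly and $v_m\to v_0$ in $\mathcal{L}^\gamma(-T,T)$. Using $\Phi_{\lambda_m,T}'(u_{\lambda_m})=0$ one computes $\mathcal{R}^0(u_{\lambda_m})\le\tfrac{p+1}{q+1}\lambda_m$ (since $\tfrac{1}{q+1}\ge\tfrac12$), hence by degree-zero homogeneity of $\lambda_0^T(\cdot)$ and $\lambda_0^T(u)\le\mathcal{R}^0(u)$,
\[ \lambda_0^T(v_m)=\lambda_0^T(u_{\lambda_m})\le\tfrac{p+1}{q+1}\lambda_m\le C'; \]
but if $v_0=0$ then $\int_{D_T}|v_m|^{q+1}\to0$ and $\int_{D_T}|v_m|^{p+1}\to0$, and the Sobolev--H\"older estimate from the proof of Lemma \ref{prop:EXlam0} gives $\lambda_0^T(v_m)\to+\infty$ — a contradiction, so $v_0\neq0$. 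Now dividing $\Phi_{\lambda_m,T}'(u_{\lambda_m})=0$ by $t_m^{2}$ produces the exact identity $1+t_m^{\,q-1}\int_{D_T}|v_m|^{q+1}=\lambda_m\,t_m^{\,p-1}\int_{D_T}|v_m|^{p+1}$; since $v_0\neq0$, the left-hand side is of order $t_m^{\,q-1}\int_{D_T}|v_0|^{q+1}$ and the right-hand side of order $\lambda\,t_m^{\,p-1}\int_{D_T}|v_0|^{p+1}$, so their ratio is of order $t_m^{\,q-p}\to+\infty$ (because $q<p$ and $t_m\to0$), contradicting the identity. Therefore $u_\lambda\neq0$, so $u_\lambda\in M_{\lambda,T}$, and together with $\Phi_{\lambda,T}(u_\lambda)\le\hat{\Phi}_{\lambda,T}$ this forces $\Phi_{\lambda,T}(u_\lambda)=\hat{\Phi}_{\lambda,T}$, i.e. $u_\lambda$ is a minimizer of \eqref{min1}. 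The crux is exactly this last paragraph: the scaling-invariant bound on $\lambda_0^T(u_{\lambda_m})$ keeps the rescaled profiles from vanishing weakly, and the mismatch of homogeneity degrees in the Nehari identity then prevents $t_m\to0$.
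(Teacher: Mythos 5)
Your argument is correct and, in its main body, follows the paper's own route: uniform boundedness of the minimizers, comparison of the levels $\hat{\Phi}_{\lambda_m,T}$ with $\hat{\Phi}_{\lambda,T}$ by pushing a minimizer $w$ at level $\lambda$ along its fibering map to a point $\kappa_m w\in M_{\lambda_m,T}$ via Proposition \ref{pradd}, and then strong convergence by the usual contradiction with weak lower semicontinuity. The one genuine difference is your last paragraph: the paper's proof stops after identifying the weak limit as a minimizer and does not spell out why the limit cannot be $0$, whereas you supply an explicit normalization argument ($v_m=u_{\lambda_m}/t_m$, the zero-homogeneity of $\lambda_0^T(\cdot)$, the bound $\lambda_0^T(u_{\lambda_m})\le\frac{p+1}{q+1}\lambda_m$, and the blow-up of $\lambda_0^T$ along sequences vanishing in $L^{q+1}$ as in Lemma \ref{prop:EXlam0}) together with the homogeneity mismatch in the Nehari identity; this is a correct and welcome completion of a point the paper leaves implicit (compare the treatment of $u_\lambda\neq 0$ in Lemma \ref{le1e}). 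One small inaccuracy, purely motivational and without effect on the proof: the claim that $M_{\lambda,T}$ admits no uniform lower bound on $\|\cdot\|_1$ is not right — the Nehari identity $\|u\|^2+\int|u|^{q+1}=\lambda\int|u|^{p+1}$ combined with the interpolation estimate used in Lemma \ref{prop:EXlam0} does yield a positive lower bound on $\|u\|_1$, uniform for $\lambda$ in bounded sets (indeed this is essentially what your rescaling argument encodes), so a direct norm argument would also have excluded vanishing.
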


\begin{proof} Let $\lambda \in \lbrack \lambda _{1P}^{D_T},+\infty )$%
, $\lambda _{m}\rightarrow \lambda $ as $m\rightarrow +\infty $ and $%
u_{\lambda _{m}}$ be a sequence of minimizers of (\ref{min1}). As in the
proof of Lemma \ref{le1e} it is derived that the set $(u_{\lambda _{m}})$ is
bounded in $ W_{per,0}^{1,2}(D_T)$. Hence by the Sobolev embedding theorem there exists
a subsequence, still denoted by $(u_{\lambda _{m}})$, such that
\begin{equation*}
u_{\lambda _{m}}\rightharpoonup \bar{w}_{\lambda }~~\mbox{weakly in}%
~~ W_{per,0}^{1,2}(D_T),~~~u_{\lambda _{m}}\rightarrow \bar{w}_{\lambda }~~%
\mbox{strongly in}~~\mathcal L^{q}(-T,T),  
\end{equation*}
where $1\leq q<2^{\ast }$, for some limit point $\bar{w}_{\lambda } \in  W_{per,0}^{1,2}(D_T)$. Hence
\begin{align}
	&\Phi_{\lambda,T}(\bar{w}_{\lambda })\leq \liminf_{m\rightarrow \infty
	}\Phi_{\lambda_m,T}(u_{\lambda _{m}}),\label{C1}~~\\
	&\Phi_{\lambda,T}^{\prime }(\bar{w}_{\lambda })\leq \liminf_{m\rightarrow \infty
	}\Phi_{\lambda_m,T}^{\prime }(u_{\lambda _{m}})=0%
	, \nonumber\\
	&P_{\lambda,T}(\bar{w}_{\lambda })\leq \liminf_{m\rightarrow \infty
	}P_{\lambda_m,T}(u_{\lambda _{m}})\leq 0.\nonumber
\end{align}
By Lemma \ref{le1e} there exists a minimizer $%
u_{\lambda }$ of (\ref{min1}), i.e., $u_{\lambda }\in M_{\lambda,T}$ and $\hat{%
\Phi}_{\lambda,T}=\Phi_{\lambda,T}(u_{\lambda })$. Observe
\begin{equation*}
|\Phi_{\lambda,T}(u_{\lambda })-\Phi_{\lambda_m,T}(u_{\lambda })|<C|\lambda
-\lambda _{m}|, ~~m=1,2,\ldots,
\end{equation*}%
where $C<+\infty $ does not depend on $m$. Since $\lambda_m>\lambda$, $m=1,2,\ldots$, $P_{\lambda_m,T}(u_{\lambda})\leq P_{\lambda,T}(u_{\lambda})\leq 0$, and one can find $t_{\lambda_m}(u_{\lambda})>0$  such that $\Phi_{\lambda_m,T}'(t_{\lambda_m}(u_{\lambda})u_{\lambda })=0$ and $\Phi_{\lambda_m,T}''(t_{\lambda_m}(u_{\lambda})u_{\lambda })\geq 0$, $m=1,2,\ldots$. Note that $\Phi_{\lambda_m,T}'(u_{\lambda})< \Phi'_{\lambda,T}(u_{\lambda})= 0$, $m=1,2,\ldots$. Hence by Proposition \ref{pradd}, $P_{\lambda_m,T}'(u_{\lambda})<0$ and $P_{\lambda_m,T}'(t_{\lambda_m}(u_{\lambda})u_{\lambda})<0$, which implies $P_{\lambda_m,T}(t_{\lambda_m}(u_{\lambda})u_{\lambda})<0$. From this we get 
\begin{equation*}
\Phi_{\lambda_m,T}(u_{\lambda })
\geq \Phi_{\lambda_m,T}(t_{\lambda_m}(u_{\lambda})u_{\lambda })
\geq \Phi_{\lambda_m,T}(u_{\lambda _{m}}), ~~m=1,2,\ldots,
\end{equation*}%
 Thus  we have
\begin{equation*}
\Phi_{\lambda,T}(u_{\lambda })+C|\lambda -\lambda _{m}|>\Phi_{\lambda_m,T}(u_{\lambda })\geq \Phi_{\lambda_m,T}(u_{\lambda _{m}})
\end{equation*}%
and consequently, $\hat{\Phi}_{\lambda,T}:=\Phi_{\lambda,T}(u_{\lambda })\geq
\liminf_{m\rightarrow \infty }\Phi_{\lambda_m,T}(u_{\lambda _{m}})$. Hence by %
\eqref{C1} we obtain
\begin{equation*}
\Phi_{\lambda,T}(\bar{w}_{\lambda })\leq \hat{\Phi}_{\lambda,T}.
\end{equation*}%
Assume $\Phi_{\lambda,T}^{\prime }(\bar{w}_{\lambda })<0$. Then $\bar{w}_{\lambda }\neq 0$ and since $\Phi_{\lambda,T}^{\prime }(t_{\lambda}(\bar{w}_{\lambda })\bar{w}_{\lambda })=0$, we have 
$$
\Phi_{\lambda,T}(t_{\lambda}(\bar{w}_{\lambda })\bar{w}_{\lambda })<\Phi_{\lambda,T}(%
\bar{w}_{\lambda })\leq \hat{\Phi}_{\lambda,T}.
$$
Moreover, since $P_{\lambda,T}(\bar{w}_{\lambda })\leq 0$, we have $P_{\lambda,T}(t_{\lambda}(\bar{w}_{\lambda })\bar{w}%
_{\lambda })\leq 0$. Thus $t_{\lambda}(\bar{w}_{\lambda })\bar{w}_{\lambda }\in
M_{\lambda,T}$ and since $\Phi_{\lambda,T}(t_{\lambda}(\bar{w}_{\lambda })\bar{w}%
_{\lambda })<\hat{\Phi}_{\lambda,T}$ we get a contradiction. Hence $\Phi_{\lambda,T}(%
\bar{w}_{\lambda })=\hat{\Phi}_{\lambda,T}$, $\Phi_{\lambda,T}^{\prime }(\bar{w}%
_{\lambda })=0$, i.e., $\bar{w}_{\lambda }$ is a minimizer of (\ref{min1}),  and $u_{\lambda _{m}}\rightarrow \bar{w}_{\lambda }$
strongly in $ W_{per,0}^{1,2}(D_T)$ as $m\rightarrow +\infty$.\QED
\end{proof}
\begin{rem}\label{remapp}
 We note that in Lemma \ref{app}, if $\lambda\in Z$ (as defined in \eqref{eqZ}), then the conclusion of the lemma holds for any sequence $\lambda_m\to\lambda$, as $m\to \infty$. Indeed, since $\lambda\in Z$, we have $P_{\lambda,T}(u_{\lambda})<0$, where $u_\lambda$ is as in the proof of the lemma. Consequently, for large enough $m=1,2,\ldots$, we obtain $P_{\lambda_m,T}(u_{\lambda})\leq 0$. Then the rest of the proof follows exactly as before.
\end{rem}

\section{A Pohozaev type inequality}

In this section, we show Lemma \ref{lem1}. We prove it in the general setting:
\begin{equation*}
	\left\{ \begin{array}{lr}
	&-\Delta u = g(u),~~ (z,x) \in (-T,T)\times \Omega,\\
	&u(-T,x)=u(-T,x), \quad
	u_z(-T,x)=u_z(-T,x),~~x \in \Omega,\\
	&u(z,x)=0,~~x \in \partial \Omega, ~~z \in (-T,T).
	\end{array}
	\right.
\end{equation*}

\begin{lem}
Assume that $\partial \Omega $ is a $C^{2}$-manifold, $N\geq 3$, $g \in C(\mathbb{R})$. Let $u\in C^{2}({D_T})\cap C^{1,\kappa}(\overline{D_T})$ for some $\kappa \in (0,1)$, be a weak solution of \eqref{1}-\eqref{3}. Then the following Pohozaev identity holds:
\begin{equation*}
\frac{1}{2^{\ast }}\int_{D_T} |\nabla_x u|^{2}\,\mathrm{d}x\mathrm{d}z+\frac{1}{2}\int_{D_T} |u_z|^{2}\,\mathrm{d}x\mathrm{d}z+\int_{D_T}G(u)\,\mathrm{d}x\mathrm{d}z=-\frac{1}{2N}\int_{-T}^T\int_{\partial \Omega }\left\vert \frac{%
\partial u}{\partial \nu }\right\vert ^{2}\,(x\cdot \nu (x))\mathrm{d}\sigma
(x).
\end{equation*}
where $G(u)$ is the primitive of $g$ and $\nu$ is the outward normal to $\partial\Omega$.
\end{lem}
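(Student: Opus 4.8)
The plan is to prove the identity by the Rellich--Pohozaev multiplier method, the key point being that the multiplier is the \emph{partial} dilation field $x\cdot\nabla_x u$ acting only in the variables $x$ and not in $z$ — the periodicity \eqref{2}--\eqref{3} in $z$ is precisely what replaces the usual control of the outer boundary terms at $z=\pm T$.

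First I would write $\Delta u=u_{zz}+\Delta_x u$, multiply the equation $-\Delta u=g(u)$ by $x\cdot\nabla_x u$, and integrate over $D_T$. On the right-hand side, $g(u)\,(x\cdot\nabla_x u)=x\cdot\nabla_x\big(G(u)\big)$, and integrating by parts in $x$ gives $-N\int_{D_T}G(u)\,\mathrm{d}x\mathrm{d}z$ plus a boundary integral over $\partial\Omega\times(-T,T)$ which vanishes because $u=0$ there (normalizing the primitive so that $G(0)=0$). The left-hand side splits as $\int_{D_T}(-u_{zz})(x\cdot\nabla_x u)+\int_{D_T}(-\Delta_x u)(x\cdot\nabla_x u)$. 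For the first term, integrating by parts in $z$ produces the end-cap contributions $\mp u_z(\pm T,x)\,(x\cdot\nabla_x u(\pm T,x))$, which cancel by \eqref{2}--\eqref{3}; what remains is $\int_{D_T}u_z\,(x\cdot\nabla_x u_z)=\tfrac12\int_{D_T}x\cdot\nabla_x(|u_z|^2)=-\tfrac{N}{2}\int_{D_T}|u_z|^2\,\mathrm{d}x\mathrm{d}z$, the $\partial\Omega$-boundary term here vanishing since differentiating $u|_{\partial\Omega}=0$ in $z$ gives $u_z=0$ on $\partial\Omega\times(-T,T)$. For the second term I would apply the classical Pohozaev computation slice-by-slice in $z$: from the pointwise identity $(x\cdot\nabla_x u)\Delta_x u=\mathrm{div}_x\big[(x\cdot\nabla_x u)\nabla_x u-\tfrac12|\nabla_x u|^2 x\big]+\tfrac{N-2}{2}|\nabla_x u|^2$ together with $\nabla_x u=(\partial u/\partial\nu)\nu$ on $\partial\Omega$, this term equals $-\tfrac{N-2}{2}\int_{D_T}|\nabla_x u|^2\,\mathrm{d}x\mathrm{d}z-\tfrac12\int_{-T}^{T}\int_{\partial\Omega}|\partial u/\partial\nu|^2(x\cdot\nu)\,\mathrm{d}\sigma\mathrm{d}z$. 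Collecting all contributions, dividing by $N$ and using $\tfrac{N-2}{2N}=\tfrac1{2^\ast}$ yields the asserted identity; specializing $g=f_\lambda$, so that $\int_{D_T}G(u)=\tfrac{\lambda}{p+1}\int_{D_T}|u|^{p+1}-\tfrac1{q+1}\int_{D_T}|u|^{q+1}$, turns the left side into $P_{\lambda,T}(u)$ and gives Lemma~\ref{lem1}.

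The main obstacle is justifying the integrations by parts with only $C^{1,\kappa}(\overline{D_T})$ regularity up to the boundary: since $u$ need not be $C^2$ on $\overline{D_T}$, the divergence theorem cannot be invoked directly on $D_T$. I would handle this in the standard way by first performing all the above computations on the interior subcylinder $(-T,T)\times\Omega_\delta$, where $\Omega_\delta=\{x\in\Omega:\ \mathrm{dist}(x,\partial\Omega)>\delta\}$ and $u$ is $C^2$, and then letting $\delta\downarrow0$. The interior integrals converge by dominated convergence; the boundary integrals over $(-T,T)\times\partial\Omega_\delta$ converge to the ones over $(-T,T)\times\partial\Omega$ because $\partial\Omega$ is a $C^2$-manifold, so $\partial\Omega_\delta$ converges to $\partial\Omega$ in the relevant $C^1$ sense, and $\nabla u$ extends Hölder-continuously to $\overline{D_T}$, so $x\cdot\nabla_x u$, $|\nabla_x u|^2$ and $|u_z|^2$ converge uniformly on the shrinking boundary layer. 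The $z$-periodicity enters only to discard the $z=\pm T$ terms and requires no limiting argument.
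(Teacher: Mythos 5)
Your proposal is correct and follows essentially the same route as the paper's Appendix C argument: the multiplier $x\cdot\nabla_x u$, term-by-term integration by parts in which the periodicity conditions \eqref{2}--\eqref{3} cancel the $z=\pm T$ contributions and the Dirichlet condition removes all lateral boundary terms except the normal-derivative one, then collecting and dividing by $N$; your $\Omega_\delta$-approximation merely makes rigorous the step the paper dismisses as standard, and your divergence-form identity for $(x\cdot\nabla_x u)\Delta_x u$ is the same computation the paper does componentwise. Note only that, exactly as in the paper's own final display, the computation produces $-\int_{D_T}G(u)\,\mathrm{d}x\mathrm{d}z$ on the left-hand side (which is what matches $P_{\lambda,T}$ and Lemma \ref{lem1} when $g=f_\lambda$), so the sign ``$+\int_{D_T}G(u)\,\mathrm{d}x\mathrm{d}z$'' in the statement is a typo that neither your argument nor the paper's proves as written.
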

\begin{proof} The idea of the proof of this lemma is standard (see \cite{poh, Il_Perid_MatSb, Takac_Ilyasov}), due to this we provide only a sketch here. 
 Denote   $u_{i}=\frac{\partial u}{\partial
x_i}$, for $i=1,2,...,N$. As in \cite{Il_Perid_MatSb, poh}, testing equation \eqref{1} with $u_{i}x_i$ we derive
\begin{eqnarray}
-\int^{T}_{-T}\int_{\Omega}u_{zz}u_{i}x_i\mathrm{d}z\mathrm{d}x -
\int^{T}_{-T}\int_{\Omega}\Delta_xuu_{i}x_i\mathrm{d}z\mathrm{d}x
 = \int^{T}_{-T}\int_{\Omega}g(u)u_{i}x_i\mathrm{d}z\mathrm{d}x.\label{2.3}
\end{eqnarray}
Integrating by parts we obtain
\begin{align}
\Sigma_{i}\int^{T}_{-T}\int_{\Omega}g(u)u_{i}x_i\mathrm{d}z\mathrm{d}x&= \Sigma_{i}
\int^{T}_{-T}\int_{\Omega}\frac{\partial }{\partial x_i}G(u)x_i\mathrm{d}z\mathrm{d}x\nonumber\\
&=-N\int^{T}_{-T}\int_{\Omega}G(u)\mathrm{d}z\mathrm{d}x+
\int^{T}_{-T}\int_{\partial \Omega}G(u)x\cdot\nu dS \label{2.41}
\end{align}
and 
\begin{align}
\int^{T}_{-T}\int_{\Omega}u_{ii}u_{i}x_i\mathrm{d}z\mathrm{d}x 
&=-\frac{1}{2}\int^{T}_{-T}\int_{\Omega}|u_{i}|^2\mathrm{d}z\mathrm{d}x
+\frac{1}{2}\int^{T}_{-T}\int_{\partial \Omega}|u_{i}|^2x_i\nu_idS.
\label{2.6}
\end{align}
For $j\neq i$, performing integrating by parts and after some manipulation, we have
\begin{align}
\int^{T}_{-T}\int_{\Omega}u_{jj}u_{i}x_i\mathrm{d}z\mathrm{d}x
=
\frac{1}{2}\int^{T}_{-T}\int_{\Omega}|u_{i}|^2\mathrm{d}z\mathrm{d}x+
\int^{T}_{-T}\int_{\partial\Omega}u_{j}u_{i}x_i\nu_jdS
-\frac{1}{2}\int^{T}_{-T}\int_{\partial\Omega}|u_{j}|^2x_i\nu_idS.
\label{2.7}
\end{align}
Summing over $j$ and $i\in \{1,2,...,N\}$ the equalities (\ref{2.6}) and (\ref{2.7}) we get
\begin{eqnarray}
 -\Sigma_{i}\int^{T}_{-T}\int_{\Omega}\Delta_xuu_{i}x_i\mathrm{d}z\mathrm{d}x=
-\frac{(N-2)}{2}\int^{T}_{-T}\int_{\Omega}|\nabla_xu|^2\mathrm{d}z\mathrm{d}x 
-\frac{1}{2}\int^{T}_{-T}\int_{\partial\Omega}(\frac{\partial }{\partial\nu}u)^2 (x\cdot\nu)dS,
\label{2.71}
\end{eqnarray}
where we have used the relation $(x\cdot\nu)\frac{\partial}{\partial\nu}u=x\cdot\nabla u$ on $\partial\Omega$. We then integrate by parts first by $z$ and then by $x$. Hence taking into account the periodicity conditions (\ref{2}) and  summing as  above, we deduce that
\begin{align*}
&-\int^{T}_{-T}\int_{\Omega}u_{zz}u_{i}x_i\mathrm{d}z\mathrm{d}x 
=\int^{T}_{-T}\int_{\Omega}u_{z}u_{iz}x_i\mathrm{d}z\mathrm{d}x
-\int_{\Omega}u_{z}(T)u_{i}(T)x_i\mathrm{d}z\mathrm{d}x
+\int_{\Omega}u_{z}(-T)u_{i}(-T)x_i\mathrm{d}z\mathrm{d}x\nonumber\\
&=-\int^{T}_{-T}\int_{\Omega}|u_{z}|^2\mathrm{d}z\mathrm{d}x -
\int^{T}_{-T}\int_{\Omega}u_{z}u_{iz}x_i\mathrm{d}z\mathrm{d}x +\int^{T}_{-T}\int_{\partial
\Omega}|u_{z}|^2x_i\nu_idS.\nonumber
\end{align*}
Hence resolving the last equality and summing as  above we obtain
\begin{eqnarray}
-\Sigma_{i}\int^{T}_{-T}\int_{\Omega}u_{zz}u_{i}x_i\mathrm{d}z\mathrm{d}x=
-\frac{N}{2}\int^{T}_{-T}\int_{\Omega}|u_{z}|^2\mathrm{d}z\mathrm{d}x+
\frac{1}{2}\int^{T}_{-T}\int_{\partial \Omega}|u_{z}|^2 x\cdot\nu dS.
\label{2.81}
\end{eqnarray}
From (\ref{2.41}), (\ref{2.71}),
(\ref{2.81}) and (\ref{2.3}), we obtain
\begin{eqnarray}
&&\hspace*{-0.5cm}\frac{(N-2)}{2N}\int^{T}_{-T}\int_{\Omega}|\nabla_xu|^2\mathrm{d}z\mathrm{d}x+
\frac{1}{2}\int^{T}_{-T}\int_{\Omega}|u_{z}|^2\mathrm{d}z\mathrm{d}x-\int^{T}_{-T}\int_{\Omega}G(u)\mathrm{d}z\mathrm{d}x\nonumber\\
&&=-\frac{1}{N}\int^{T}_{-T}\int_{\partial \Omega}G(u)x\cdot\nu dS
-\frac{1}{2N}\int^{T}_{-T}\int_{\partial
\Omega}(\frac{\partial }{\partial\nu}u)^2 x\cdot\nu dS -\frac{1}{2N}\int^{T}_{-T}\int_{\partial
\Omega}|u_{z}|^2 x\cdot\nu dS. \nonumber
\end{eqnarray}
Since $u(z,\cdot)\equiv 0$ on $\partial \Omega$ for all $z \in [-T,T]$,  we infer that $G(u(z,x))=0$ and $u_z(z,x)=0$ on $\partial \Omega$ for every $z \in [-T,T]$. Hence we conclude the proof of the lemma. \QED
\end{proof}

\end{appendices}


\begin{thebibliography}{25}
\bibitem{Alfimov} G.L. Alfimov,  V.M. Eleonsky,  N.E. Kulagin,  L.M. Lerman, and V.P. Silin.  "On existence of nontrivial solutions for the equation $\Delta u - u+ u^3= 0$". Physica D: Nonlinear Phenomena, 44(1-2), (1990), 168-177.

\bibitem{AntontsevDiaz} S.N. Antontsev, J.I. D\'iaz, S. Shmarev,  and  A.J. Kassab.  "Energy Methods for Free Boundary Problems: Applications to Nonlinear PDEs and Fluid Mechanics". Progress in Nonlinear Differential Equations and Their Applications, Vol 48. Appl. Mech. Rev., 55(4), (2002), B74-B75.

\bibitem{beres} H. Berestycki, and P-L. Lions. "Nonlinear scalar field equations, I existence of a ground state." Archive for Rational Mechanics and Analysis 82, no. 4 (1983), 313-345.

\bibitem{BerestWei} H. Berestycki, and J. Wei. "On least energy solutions to a semilinear elliptic equation in a strip." Discrete and Continuous Dynamical Systems 28(3), (2010), 1083-1099.

\bibitem{BDI} V. Bobkov,  P. Dr\'abek, and Y. Ilyasov. "On partially free boundary solutions for elliptic problems with non-Lipschitz nonlinearities." Applied Mathematics Letters 95 (2019), 23-28.

\bibitem{Busca} J. Busca, and P. Felmer. "Qualitative properties of some bounded positive solutions to scalar field equations." Calc. Var. Partial Differential Equations 13 (2) (2001) 191–211.

\bibitem{CortElgFelmer-1} C.~Cort\'{a}zar, M.~Elgueta, and P.~Felmer. "Symmetry in an elliptic problem and the blow-up set of a quasilinear heat equation." Comm. P.D.E., 21 (1996), 507-520.

\bibitem{CortElgFelmer-2} C.~Cort\'{a}zar, M.~Elgueta, and P.~Felmer. "On a semi-linear elliptic problem in $\mathbb{R}^{N}$ with a non-Lipschitzian non-linearity." Advances in Diff. Eqs., 1 (1996), 199-218.

\bibitem{dancer} N. Dancer. "New solutions of equations on $\mathbb R^n$". Ann. Scuola Norm. Sup. Pisa Cl. Sci. 30 (4),  (2001), 535-563.

\bibitem{Delaunay}  C. Delaunay. "Sur les surfaces de r\'evolution dont la courbure moyenne est constante." (French) Jounal de math\'ematiques, 26 (1898), 43–52.

\bibitem{DelPino} M. Del Pino, M. Kowalczyk, F. Pacard, and J. Wei. "The Toda system and multiple-end solutions of autonomous planar elliptic problems", Advances in Mathematics 224, no. 4 (2010): 1462-1516.

\bibitem{Diaz} J. I. D\'iaz, "Nonlinear partial differential equations and free boundaries." Elliptic Equations. Research Notes in Math. 1 (1985): 106.

\bibitem{DiazHerIlCOUNT} J. I. D\'iaz,  J. Hern\'andez, and Y. Sh Ilyasov. "On the exact multiplicity of stable ground states of non-Lipschitz semilinear elliptic equations for some classes of starshaped sets." Adv. Nonlinear Anal. 9 (2020), no. 1, 1046–1065.

\bibitem{DIH1} J. I. D\'{\i}az, J.  Hern\'{a}ndez,  and Y. Il'yasov.
"On the existence of positive solutions and solutions with compact support for a spectral nonlinear elliptic problem with strong absorption." Nonl. Anal.: Th., Meth. \& Appl. (2015). 119, 484--500.

\bibitem{DIH} J. I. D\'{\i}az, J.  Hern\'{a}ndez,  and Y. Il'yasov. "Flat and compactly supported solutions of some non-Lipschitz autonomous semilinear equations may be stable for $\mathrm{N}\geq 3$." Chin. Ann. Math. Ser. B 38 (2017), no. 1, 345–378.

\bibitem{Dyachenko} A.I. Dyachenko, A.N. Pushkarev, A.M. Rubenchik, R.Z. Sagdeev, V.F. Shvets,  and V.E. Zakharov.  "Computer simulation of Langmuir collapse." Physica D: Nonlinear Phenomena, 52(1), (1991), 78-102.

\bibitem{giltrud} D. Gilbarg, and N. S. Trudinger. "Elliptic partial
differential equations of second oder." 2nd edition, Grundlehren 224, Springer, Berlin-Heidelberg-New York-Tokyo (1983).

\bibitem{Groves} M.D. Groves, M. Haragus and S.M. Sun. "A dimension–breaking phenomenon in the theory of steady gravity–capillary water waves." Philosophical Transactions of the Royal Society of London. Series A: Mathematical, Physical and Engineering Sciences, 360(1799), (2002) 2189-2243.
\bibitem{Hern} J. Hern\'andez, F. Mancebo and J. M. Vega. "Positive solutions for singular nonlinear elliptic equations." Proc. Roy. Soc. Edinburgh 137A (2007), 41-62.

\bibitem{Il_Perid_MatSb} Ya. Sh. Il'yasov. "On the existence of periodic solutions of semilinear elliptic equations." Mat. Sb., 184:6 (1993), 67-82; Russian Acad. Sci. Sb. Math., 79:1 (1994), 167-178.

\bibitem{ilDifUrav94} Ya Sh. Il'yasov.  "An Approximation to Ground State Solutions for the Equation-$-\Delta u= g (u)$ in $\mathbb{R}^{N+ 1}$." Differential Equations 30, no. 4 (1994): 570-579.

\bibitem{Il_Perid_Izv95} Ya. Sh. Il'yasov. "On periodic non-trivial solutions of the equation $-\Delta u=g(u)$ in $\mathbb{R}^{N+1}$." Izvestiya: Mathematics 59, no. 1 (1995), 101-119.

\bibitem{ilCrit}Ya. Sh. Il'yasov. "On the curve of critical exponents for nonlinear elliptic problems in the case of a zero mass." Comput. Math. Math. Phys. 57, no. 3 (2017): 497-514.

\bibitem{IlEg}Y. Ilyasov and Y. Egorov. "Hopf maximum principle violation for elliptic equations with non-Lipschitz nonlinearity." Nonlin. Anal. 72 (2010), 3346-3355.

\bibitem{ilyaReil}Y. Ilyasov. "On extreme values of Nehari manifold
method via nonlinear Rayleigh's quotient." Topological Methods in Nonlinear Analysis 49, no. 2 (2017): 683-714.

\bibitem{Takac_Ilyasov}Y. Il'yasov,  and P. Takac. "Optimal-regularity, Pohozhaev's identity, and nonexistence of weak solutions to some quasilinear elliptic equations." J. Differential Equations, 252(3), (2012), 2792-2822.

\bibitem{Kaper1}H. Kaper and M. Kwong. "Free boundary problems for
Emden-Fowler equation." Differential and Integral Equations, 3 (1990), 353-362.

\bibitem{Kaper2}H. Kaper, M. Kwong and Y. Li. "Symmetry results for
reaction-diffusion equations." Differential and Integral Equations, 6 (1993), 1045-1056.

\bibitem{lionsF}J. L. Lions. "Quelques m\'ethodes de r\'esolution des problemes aux limites non lin\'eaires." (Dunod, Paris) (1969).

\bibitem{maagli}H. Maagli, J. Giacomoni and P. Sauvy. "Existence of compact support solutions for a quasilinear and singular problem." Differential Integral Equations 25 (2012), no. 7-8, 629-656.

\bibitem{Malchiodi}A. Malchiodi. "Some new entire solutions of semilinear elliptic equations on $\mathbb{R}^n$". Advances in Mathematics, 221(6), (2009), 1843-1909.

\bibitem{Mazzeo}R. Mazzeo and F. Pacard. "Constant mean curvature surfaces with Delaunay ends." Comm. Anal. Geom. 9 (2001) 169–237.

\bibitem{Milewski}P.A. Milewski and Z. Wang. "Transversally periodic solitary gravity–capillary waves." Proceedings of the Royal Society A: Mathematical, Physical and Engineering Sciences, 470(2161), (2014) p.20130537. 

\bibitem{poh} S. I. Pohozaev. "Eigenfunctions of the equation $\Delta u+\lambda f(u)=0$." Sov. Math. Doklady, 5 (1965), 1408-1411.

\bibitem{pucci}P. Pucci and J. Serrin. "The maximum principle. Progress in Nonlinear Differential Equations and their Applications." 73. Birkh\"auser Verlag, Basel, 2007.

\bibitem{Robinson}P.A. Robinson, D.L. Newman and M.V. Goldman. "Three-dimensional strong Langmuir turbulence and wave collapse." Physical review letters, 61(6), (1988) p.702.

\bibitem{Robinson2}P.A. Robinson. "Nonlinear wave collapse and strong turbulence." Reviews of modern physics, 69(2), (1997), p.507.

\bibitem{Serrin-Zou}J. Serrin and H.  Zou. "Symmetry of ground states of quasilinear elliptic equations." Archive for Rational Mechanics and Analysis, 148 (4) (1999) 265-290.

\bibitem{vazquez}J.L. V\'azquez. "A strong maximum principle for some quasilinear elliptic equations." Appl. Math. Optim. 12, (1984), 191-202.

\bibitem{Zakharov}V.E. Zakharov. "Stability of periodic waves of finite amplitude on the surface of a deep fluid." Journal of Applied Mechanics and Technical Physics, 9(2),  (1968) 190-194.

\bibitem{Zhan}W. Zhan, and P. A. Milewski. "Dynamics of gravity–capillary solitary waves in deep water." Journal of fluid mechanics 708 (2012): 480-501.

\bibitem{Zozulya}A.A. Zozulya, D.Z. Anderson, A.V. Mamaev and M. Saffman. "Solitary attractors and low-order filamentation in anisotropic self-focusing media." Physical Review A, 57(1), 1998, 522.

\end{thebibliography}
\end{document}